\DeclareRobustCommand{\bigtimes}{%
  \mathop{\vphantom{\sum}\mathpalette\@bigtimes\relax}\slimits@
}
\newcommand{\@bigtimes}[2]{\vcenter{\hbox{\make@bigtimes{#1}}}}
\newcommand{\make@bigtimes}[1]{%
  \sbox\z@{$\m@th#1\sum$}%
  \setlength{\unitlength}{\wd\z@}%
  \begin{picture}(1,1)
  \roundcap
  \linethickness{.17ex}
  \Line(.1,.1)(.9,.9)
  \Line(.1,.9)(.9,.1)
  \end{picture}%
}
\crefname{example}{Example}{Examples}
\Crefname{example}{Example}{Examples}
\crefname{problem}{Problem}{Problems}
\Crefname{problem}{Problem}{Problems}
\renewcommand{\phi}{\varphi}
\DeclareMathOperator{\dist}{dist}
\newcommand{\epsi}{\varepsilon}
\newcommand{\TheTitle}{Stochastic collocation schemes for Neural Field Equations with random
data}
\newcommand{\TheShortTitle}{Stochastic collocation for Neural Field Equations}
\newcommand{\TheAuthors}{D. Avitabile, F. Cavallini, S. Dubinkina, G.~J. Lord}
\headers{\TheShortTitle}{\TheAuthors}
\title{{\TheTitle}}
\author{%
    Daniele Avitabile\thanks{%
    Amsterdam Centre of Mathematics and Computation,
    Department of Mathematics,
    Vrije Universiteit Amsterdam, 
    De Boelelaan 1111,
    1081 HV Amsterdam, The Netherlands.
    MathNeuro Team,
    Inria branch of the University of Montpellier,
    860 rue Saint-Priest
    34095 Montpellier Cedex 5
    France.
    (\email{d.avitabile@vu.nl}).
  }
  \and Francesca Cavallini\thanks{%
    Amsterdam Centre of Mathematics and Computation
    Department of Mathematics,
    Vrije Universiteit Amsterdam, 
    De Boelelaan 1111,
    1081 HV Amsterdam, The Netherlands.
  }
  \and Svetlana Dubinkina\footnotemark[2]
  \and Gabriel~J. Lord\thanks{%
    Department of Mathematics,
    Radboud University, 
    Postbus 9010,
    6500 GL Nijmegen,
    The Netherlands.
  }
}
\begin{document} 
\maketitle

\begin{abstract}
  We develop and analyse numerical schemes for uncertainty quantification in neural
  field equations subject to random parametric data in the synaptic kernel, firing
  rate, external stimulus, and initial conditions. The schemes combine a generic
  projection method for spatial discretisation to a stochastic collocation scheme for
  the random variables. We study the problem in operator form, and derive estimates
  for the total error of the schemes, in terms of the spatial projector. We give
  conditions on the projected random data which guarantee analyticity of the
  semi-discrete solution as a Banach-valued function. We illustrate how to verify
  hypotheses starting from analytic random data and a choice of spatial projection.
  We provide evidence that the predicted convergence rates are found in various
  numerical experiments for linear and nonlinear neural field problems.
\end{abstract}

% \da[inline]{Get urls for Arxiv submissions of both papers, and make sure they are
% cited in the cross-references of each bibliography}
% \da[inline]{Prepare GitHub repository, add code repos in both papers.}
% \da[inline]{Understand where we use piecewise smooth $D$ \cref{hyp:domain}, in both papers.}
% \da[inline]{Give a proper citation to Dirk, once his preprint is on Arxiv?}
% \da[inline]{Find a proper reference for Arzela Ascoli on Banach spaces (but everyone
% assumes it holds)}

\section{Introduction}\label{sec:introduction} 
Modelling brain dynamics and comprehending how uncertainties in the inputs affect
quantities of interest (QOI) is a fundamental question in neuroscience. This field
faces several challenges, including epistemic uncertainty arising from imperfect
models, which are often phenomenological in nature. Additionally, the nonlocality of
these models necessitates specialized numerical approaches.

In this paper we study uncertainty quantification (UQ) in a class of nonlinear brain
activity models known as \textit{neural fields}, which are integro-differential
equations used as large-scale descriptions of neuronal
activity. They model the cortex as a continuum, and provide a versatile tool to understand pattern
formation on a variety of spatial domains
\cite{wilson1973mathematical,
amari1977dynamics,Ermentrout.1998qno,
Bressloff.2014k0p,
coombes2014neural}. 

Neural fields are amenable to functional and nonlinear analysis in simple cortices,
and thus can be used to investigate fundamental mechanisms for the generation of
brain activity. For instance, neural field simulations on spherical
domains~\cite{robinson1997propagation,bressloff2003spherical,Visser.2017}, or
more realistic cortices
\cite{martin2018numerical,pangGeometricConstraintsHuman2023,shaw2025radialbasisfunctiontechniques}
support several coherent structures observed experimentally, such as 
waves or stationary localised
structures~\cite{lee2005traveling,huang2010spiral,kimRingAttractorDynamics2017}.

While a large body of work shows that nonlinearity and nonlocality are building blocks
for patterns of neural activity, their robustness to noisy input data is still mostly
unexplored. In the mathematical and computational neuroscience communities, Monte
Carlo sampling is a popular approach to estimate mean and variance of
QOI. Methods offering faster convergence rates, such as Stochastic Finite Elements or
Stochastic Collocation, have been developed for applications
in other branches of physical and life sciences (see 
\cite{smithUncertaintyQuantificationTheory2014, Lord:2014ir,
adcockSparsePolynomialApproximation2022} for textbooks discussing
these methods). The literature on these schemes,
however, focuses predominantly on Partial Differential Equations (PDEs)~\cite{
xiuModelingUncertaintySteady2002,
ghanemStochasticFiniteElements2003,
babuskaGalerkinFiniteElement2004a,
babuskaStochasticCollocationMethod2007,
frauenfelderFiniteElementsElliptic2005,
xiuHighOrderCollocationMethods2005,
romanStochasticGalerkinMethod2006,
nobileSparseGridStochastic2008a,
nobileAnisotropicSparseGrid2008,
nobileAnalysisImplementationIssues2009,
hoangSparseTensorGalerkin2013a}, and is not immediately applicable to neural
field equations, even though UQ techniques for ODEs have recently been applied to
connectomic ODE models of reaction--diffusion processes for
neurodegenration~\cite{cortiUncertaintyQuantificationFisherKolmogorov2024}.

A classical neural field problem is written in terms of an activity variable
$u(x,t)$, modelling voltage or firing rate at time $t$ and position $x$ of a cortical
domain $D \subset \RSet^3$. We consider (for now informally) a neural field subject
to the following independent random data:  an initial condition $v(x,\omega_v)$, an
external forcing $g(x,t,\omega_g)$, a synaptic kernel $w(x,x',\omega_w)$, modelling
connections from point $x'$ to $x$ in $D$, and a firing rate function
$f(u,\omega_f)$, modelling how neurons transform input currents into spiking rates.
We distinguish between a random linear neural field (RLNF, henceforth indicated with
the index $\LSet = 1$) in
which the firing rate is deterministic and linear $f(u,\omega_f) \equiv u$, and a
random nonlinear neural field (RNNF, $\LSet = 0$). Neural field problems with random input data read as
follows: for fixed $v$, $w$, $g$, and $f$, we seek for a mapping $u \colon D\times J
\times \Omega \to \RSet$ such that for $\prob$-almost all $\omega \in
\Omega$ it holds
\begin{equation}\label{eq:nonlinRNF}
  \begin{aligned}
    &\partial_t u(x,t,\omega)  =  -u(x,t,\omega) + g(x,t,\omega_g) 
     + \int_D w(x,x',\omega_w) f(u(x',t,\omega),\omega_f)\, dx',  \\
    &u(x,0,\omega)  = v(x,\omega_{v}).
\end{aligned}
\end{equation}

\begin{figure}\label{fig:ExpVar}
\centering
	\includegraphics{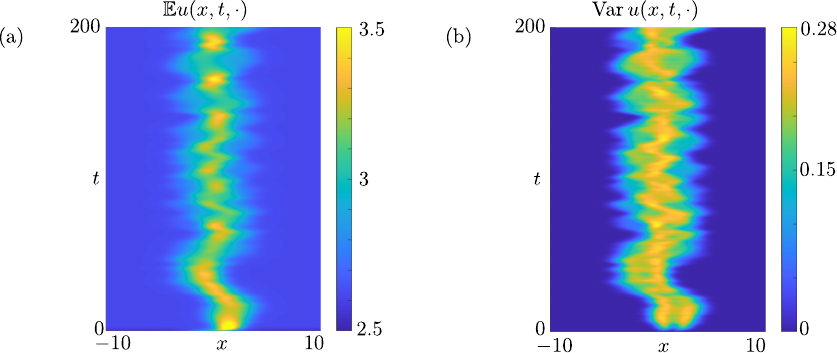}
  \caption{Expectation and variance of a solution $u$ to the nonlinear
    neural field \cref{eq:nonlinRNF} with random forcing. The problem is posed on a 1-dimensional
    ring $D = \RSet/L \ZSet$ of width $L = 22$, and time interval $J = [0,200]$.
    Deterministic data is specified through 
    an excitatory-inhibitory kernel $w(x,y) = W(x-y)$ with $W(z) = (2-z^2)\exp(-z^2)$, 
    a sigmoidal firing rate $f(u) = [ 1+ \exp( -20(u-10) ) ]^{-1}$, and initial
    condition $v(x) = 2.5 + 0.5/\cosh^2(0.5 x)$. The external input is an oscillating
    pulse with random instantaneous speed $c$ depending on $6$ random variables:
    the forcing is given by $g(x,t,Y(\omega_g))$, where $g(x,t,y) = 1.4
    \exp(-(x-c(t,y))^2)$, $y = (c_1,c_2,c_3,f_1,f_2,f_3)$, $c(t,y) = \sum_{k=1}^3 c_k
    \sin(2\pi t /f_k)$. The random variables $Y_i$ are independently distributed with $Y_i \sim
    \mathcal{U}[\alpha_i,\beta_i]$. The model is discretised in space using a
    spectrally convergent Fourier collocation scheme with $n = 100$ gridpoints.
    Expectation and variance are computed with a stochastic collocation scheme, with
    a dense, tensor-product grid of 10 Gauss-Legendre points in each of
    the intervals $[\alpha_i,\beta_i]$, for a total $10^6$ points. Parameters
    $[\alpha_1,\beta_1] = [0,4]$,
    $[\alpha_2,\beta_2] = [1/6,2/3]$,
    $[\alpha_3,\beta_3] = [1/10,4/5]$,
    $[\alpha_4,\beta_4] = [40,60]$,
    $[\alpha_5,\beta_5] = [10,50/3]$,
    $[\alpha_6,\beta_6] = [100,200]$.
  }
\end{figure}

We defer to later sections the probabilistic setting for data and solutions to the
problem, and we view \cref{eq:nonlinRNF} as a prototype for studying forward UQ for
spatio-temporal brain activity, as it encapsulates inputs and mathematical structure
present in more realistic and involved neurobiological models. Recently,
two stepping stones for investigating UQ problems in the form
\cref{eq:nonlinRNF} have been laid. Firstly, spatial discretisations
of \textit{deterministic problems} have been investigated in abstract form
\cite{avitabileProjectionMethodsNeural2023}: following up from work by Atkinson
\cite{atkinson2005theoretical}, it is possible to derive convergence estimates for
Collocation and Galerkin schemes, of Finite-Elements or Spectral type, as generic
projection methods. Secondly, in \cite{avitabile2024NeuralFields} we studied
\cref{eq:nonlinRNF} as a Cauchy problem with random data, posed on
Banach spaces; in particular, we have established conditions on the input data that
guarantee the $L^p$-regularity of the mapping $\omega \mapsto
u(\blank,\blank,\omega)$ in the natural function spaces for neural fields
and for their spatially-discretised versions; further, the theory in
\cite{avitabile2024NeuralFields} covers the case of finite-dimensional noise,
which deals with inputs parametrised by a finite, possibly large, number of
random variables.

We are thus in the position of analysing schemes that approximate statistics of
QOI, which is the main goal of the present article. An example of
these types of computations is given in \Cref{fig:ExpVar}, where the mean and
variance of $\omega \mapsto u(x,t,\omega)$ is approximated for a neural field
equation posed on a simple 1D cortex (a ring), subject to a random forcing. The
deterministic data used for the simulation (synaptic kernel, firing rate, and initial
value) are frequently used in the mathematical neuroscience literature, and are such
that the neural field supports a bump of localised activity (a steady state). The
computation shows how this stationary pattern is affected by a random, pulsatile,
sloshing external forcing. This setup is relevant to cortical processes associated to
working memory associated to the brain navigational system, or
oculomotor responses (we refer to \cite{avitabileBumpAttractorsWaves2023} and
references therein for further modelling and experimental work).

In \cref{fig:ExpVar} the system is discretised in space using a Chebyshev
interpolatory projector from \cite{avitabileProjectionMethodsNeural2023}, and QOI are
estimated with a Stochastic Collocation scheme in the spirit of the classical work by
Babu\v ska, Nobile, and Tempone~\cite{babuskaStochasticCollocationMethod2007}. This
method is part of a family of schemes analysed here, that we call
\textit{spatial-projection, stochastic-collocation schemes}, in which
stochastic collocation is paired to an arbitrary projection scheme, thus retaining the
flexibility and generality of the framework in
\cite{avitabileProjectionMethodsNeural2023,avitabile2024NeuralFields}.

The analysis of generic spatial-projection stochastic collocation schemes for
neural fields is the main contribution of this paper. In the PDE literature cited
above, the spatial projector is often an orthogonal
projector, leading prevalently to Galerkin Finite Elements schemes. This
approach, which has also been extended to time-dependent, linear parabolic PDEs by
Zhang and Gunzburger~\cite{Zhang.2012}, stems naturally from the deterministic
functional setup, where schemes on nontrivial geometries are presented using weak
formulations. Adapting this approach to neural fields comes with different
challenges, and naturally brings a generalisation: it is possible to study in
parallel neural fields as dynamical systems on the Banach phase spaces $\XSet =C(D)$
and $ \XSet = L^2(D)$
\cite{Potthast:2010kb,faugeras2008absolute}; this dichotomy shows up also in their
numerical treatment, which features a
single projector $P_n$ on an $n$-dimensional subspace of $\XSet \in \{ C(D),
L^2(D)\}$, encompassing at once schemes in strong and weak form for collocation and
Galerkin methods, respectively. The convergence rates of such schemes is determined
in a unified manner, studying how $P_n$ approximates an element $v$ of the phase
space, via an estimate of $\| P_n v - v \|$~\cite{avitabileProjectionMethodsNeural2023}. 

Taking the literature on elliptic and parabolic PDEs as a guiding
example~\cite{babuskaStochasticCollocationMethod2007,Zhang.2012}, we
estimate the error $ \|  u - u_{n,q} \|$, measured in an appropriate norm, between
the solution $u$ and an approximation $u_{n,q}$ which combines an $s(n)$-dimensional
(interpolatory or orthogonal) spatial projection with a $d(q)$-dimensional
interpolatory projection for the random variables, for some $s(n)$ and $d(q)$ with
$s(n) \to \infty$ as $n \to \infty $ and $d(q) \to \infty $ as $q \to \infty $, respectively. We derive an error bound in terms
of a spatial component $\| u - u_n \|$ and a stochastic collocation component $\| u_n -
u_{n,q}\|$ with constants that are homogeneous in $q$ and $n$, respectively.

However, since the neural field theory is built around a generic projected dynamical
system with random data, this leads to some differences from the PDE
literature. For instance, no assumption can be made on the convergence of $\|  P_n
\|$ as $n \to \infty$, in contrast to existing literature using orthogonal projectors for
which $\| P_n \|$ is bounded homogeneously in $n$. Keeping generality is valuable for
neuroscience applications, because efficient schemes for neural fields use projectors
that are not orthogonal, or for which the sequence $\{ \| P_n \| \}_n$ is unbounded (the one
used in \cref{fig:ExpVar} being one of them). 

Existing literature on elliptic and parabolic PDEs derives exponential convergence
rates in $q$ for $\| u_n - u_{n,q} \|$ by proving that analyticity of the random input data
implies analyticity of $u$~\cite{babuskaStochasticCollocationMethod2007,Zhang.2012}.
Our theory recovers analogous results for neural fields when $\{ \| P_n \| \}_n$ is bounded
in $n$, but is applicable to cases in which $\{ \| P_n \| \}_n$ is unbounded, provided the
random input data has sufficient \textit{spatial} regularity. In particular, we give
conditions on the \textit{projected random input data} (for instance $P_n g $ and
$P_n v$) which guarantee the analyticity of the spatially discretised solution $u_n$
as a function of the random variables. Because the derived estimates on the
derivatives are homogeneous in $n$, the same occurs for the stochastic collocation
error $\| u_n - u_{n,q} \|$, as required.

In addition, the treatment presented here does not assume Hilbert structure of the
phase space $\XSet$ of the dynamical system with random data, leading to strong norms
when $\XSet = C(D)$, a case that is frequent in neural field applications. To this end
we use definitions and tools for Banach-space valued
functions~\cite{buffoni_analytic_2003}.

We derive error bounds for linear neural fields, without assuming dissipative
dynamics, that is, without assuming that the underlying semigroup operator is
contractive, in a sense that we make precise later. We link the lack of contractivity
to time-dependent analyticity radii that shrink as time increases. We argue that,
while analyticity results are in place for the nonlinear case, the sharp gradients
induced by non-contractive dynamics is to be expected for nonlinear problems near
bifurcation points. Finally, we provide evidence of exponential convergence rates in
linear and nonlinear problems, and signpost a slower convergence rates near a
bifurcating solution. Throughout the paper, we provide illustrative examples on
how to check hypotheses on random data with affine and non-affine dependence on
parameters, with various spatial projectors.

The paper is organised as follows: we set our notation in \cref{sec:notation}, and
summarise hypotheses and results for neural fields with random data in
\cref{sec:mainHypotheses}. Our numerical scheme is introduced in
\cref{sec:numericalScheme}, and analysed for linear problems in
\cref{sec:linearAnalysis}. \Cref{sec:nonlinear} contains comments on the nonlinear
problem, while \cref{sec:numerics} presents numerical experiments. We conclude in
\cref{sec:conclusions}.

\section{Notation}\label{sec:notation} 
We set $\NSet_n = \{ 1,\ldots,n \}$ and $\NSet_{> n} = \NSet \setminus \NSet_n$ (we
similarly use $\NSet_{\geq n}$, and $\ZSet_{\geq 0}$, where $\ZSet$ denotes the
integers). The symbol $\XSet$ denotes a Banach space of functions defined on some $D
\subset \RSet^\textrm{dim}$, while $J = [0,T] \subset \RSet$ is used for
time intervals. We denote by $C^k(\Gamma,\BSet)$ the space of $k$-times continuously
differentiable functions on a
subspace $\Gamma \subseteq \RSet^m$ to a Banach space $\BSet$, with norm $\| v
\|_{C^k(\Gamma,\BSet)} = \max_{j \in \NSet_k}\sup_{y \in \Gamma} \| v^{(j)}(y)
\|_{\BSet}$. We use the shortcuts $C(\Gamma,\BSet) :=C^0(\Gamma,\BSet)$ and
$C^k(\Gamma) := C^k(\Gamma,\RSet)$.

The set of bounded linear operators on $\Gamma$ to $\BSet$, with standard operator
norm, is denoted by $BL(\Gamma,\BSet)$. When it is important to emphasise that
derivatives are understood in the sense of Fréchet, we indicate 
the $j$th Frechét derivative of $v \colon \Gamma \to \BSet$ at $y$ by $d^{\,j}v[y]
\colon \Gamma \to BL(\Gamma^k,\BSet)$. 

We use the symbol $u$ for different, interrelated mappings, the most basic being
$u(x,t,\omega)$ for the neural field solution as a mapping from $D \times J \times
\Omega$ to
$\RSet$. We also progressively ``slice" this function from the
leftmost argument, and so $u(t,\omega)$ is a mapping on $J \times \Omega$ to $\XSet$,
so that, with a little abuse of notation, $u(t,\omega)(x) = u(x,t,\omega)$; further,
we may use $u(\omega)$ for the corresponding mapping on $\Omega$ to $\BSet$, where
$\BSet$ is a suitable Banach space of functions on $D \times J$, for instance $\BSet =
C(J,\XSet)$, and so $u(\omega)(x,t) = u(x,t,\omega)$. In addition, it is necessary
to consider finite-dimensional noise random fields, so that $u(x,t,\omega) = \tilde
u(x,t,Y(\omega))$ for suitable choices of the function $\tilde u$ and the
$\RSet^m$-valued random variables $Y$. We often omit the tilde, but we warn the
reader when necessary, to help disambiguating the overloaded symbol
$u$.
 
For a Banach space $\BSet$, and a weight function $\sigma \colon \Gamma \to
\RSet_{\geq 0}$ we make use of the weighted function spaces, 
\begin{equation}\label{eq:WeightSpace}
	C^0_\sigma(\Gamma,\BSet)=\Big\{v:\Gamma\rightarrow \BSet, \quad v \in
    C^0(\Gamma,\BSet), \quad 
	\| u \|_{C^0(\Gamma,\BSet)} := \max_{y\in\Gamma} \Vert \sigma(y) v(y) \Vert_\BSet<\infty \Big\},
\end{equation}
and
\begin{equation}\label{eq:WeightCkSpace}
  \begin{aligned}
  C^k_\sigma(\Gamma,\BSet) =  \Big\{v:\Gamma\rightarrow \BSet, & \text{ $v$ is $k$ times
    differentiable, } \quad  \\
                     & \max_{y\in\Gamma} \bigl\Vert \, \sigma(y) \, d^{\,j}v[y] \,
                     \bigr\Vert_{BL(\Gamma^k,\BSet)}<\infty \quad
	\text{for all}\quad j\in \NSet_k \Big\},
  \end{aligned}
\end{equation}
with norm $\| v \|_{C^k_\sigma(\Gamma,\BSet)} = \max_{j \in \NSet_k} \max_{y \in
\Gamma} \| \sigma(y) d^{\,j}v[y] \|_{BL(\Gamma^k,\BSet)}$.

We denote Bochner spaces of $\BSet$-valued random variables on a probability space
$(\Omega,\calF,\prob)$ as $L^p(\Omega,\calF,\prob; \BSet)$, or simply
$L^p(\Omega,\BSet)$, where $p \in [0,\infty]$; these spaces are the equivalence classes of
strongly $\BSet$-valued
random variables endowed with norms
\[
  \begin{aligned}
  & \| u \|_{L^p(\Omega,\BSet)} = ( \mean \| u \|^p)^{1/p} = 
    \bigg(
    \int_\Omega \| u(\omega) \|^p_\BSet \, d\prob(\omega)
    \bigg)^{1/p} \qquad p \in [0,\infty), \\
  & \| u \|_{L^\infty(\Omega,\BSet)} = \pesssup_{\omega \in \Omega} \| u(\omega)
  \|_\BSet. \\
  \end{aligned}
\]
For a given probability density $\rho \colon \Gamma \subseteq \RSet^m \to
\RSet_{\geq 0}$ we set $L^p_\rho(\Gamma,\BSet) := L^p(\Gamma,\calB(\RSet^m),\rho
dy,\BSet)$.

\section{Hypotheses and results for neural fields with random data}\label{sec:mainHypotheses} 
We review the functional setup and standing
hypotheses for the study of neural fields with random data, as obtained
in~\cite{avitabile2024NeuralFields}. We begin by discussing the spatial and
temporal domains of the neural field equation.

\begin{hypothesis}[Spatio-temporal domain]\label{hyp:domain} It holds $(x,t) \in D
  \times J$, where $D\subset \mathbb{R}^\textrm{dim}$ is a compact domain with
  piecewise smooth boundary, and $J = [0,T] \subset \RSet$.
\end{hypothesis}
% \da[]{Where do we need piecewise-smooth boundary?}

We formalise random data in the neural field equations similarly to the linear parabolic PDE
case~\cite{Zhang.2012}, hence we consider the probability spaces
$(\Omega_w,\calF_w,\prob_w)$,
$(\Omega_f,\calF_f,\prob_f)$,
$(\Omega_g,\calF_g,\prob_g)$,
and $(\Omega_{v},\calF_{v},\prob_{v})$ or, compactly
$\{ (\Omega_\alpha,\calF_\alpha,\prob_\alpha) \colon \alpha \in \USet\}$, with $\USet
\subseteq \{ w,f,g,v \}.$
We introduce 
\begin{equation}\label{eq:randomData}
\begin{aligned}
  & w \colon D \times D \times \Omega_w \to \RSet, 
  && f \colon \RSet \times \Omega_f \to \RSet, \\
  & g \colon D\times J \times \Omega_g \to \RSet, 
  && v \colon  D \times \Omega_v \to \RSet,
\end{aligned}
\end{equation}
and we are interested in how uncertainty in the data is propagated by the neural
field model \cref{eq:nonlinRNF}. We recall that the index $\LSet = 1$ indicates
linear and deterministic firing rates,
$f(u) = u$, and $\LSet = 0$ is used for realisations $u \mapsto f(u,\omega_f)$ of the
firing rate are nonlinear.

We assume that sources of noise are independent, as follows.
\begin{hypothesis}[Independence] The random fields $w$, $f$, $g$, $v$ are mutually
  independent: the event space $\Omega$, $\sigma$-algebra $\calF$, and probability
  measure $\prob$ are given by
  \[
    \Omega = \bigtimes_{\alpha \in \USet} \Omega_\alpha, \qquad 
    \calF = \bigtimes_{\alpha \in \USet} \calF_\alpha, \qquad 
    \prob = \prod_{\alpha \in \USet} \prob_\alpha, \qquad
    \quad  
    \mathbb{U} = 
    \begin{cases}
      \{w,g,v\} & \text{if $\LSet = 1$,} \\[0.5em]
    \{w,f,g,v\} & \text{if $\LSet = 0$.}
  \end{cases}
\]
\end{hypothesis} 

The theory in \cite{avitabileProjectionMethodsNeural2023} casts the neural field problem in operator form as
an ODE on a Banach space $\XSet$ with random data. Following this approach, we deal
concurrently with two common functional setups of this problem, as is seen in the
next hypothesis.

\begin{hypothesis}[Phase space]\label{hyp:phaseSpace} The phase space is either
  $\XSet$ = $C(D)$, the space of continuous functions on $D$ endowed with the supremum
  norm $\| \blank \|_\infty$, or $\XSet = L^2(D)$, the Lebesgue space of
  square-integrable functions on $D$, endowed with the standard Lebesgue norm $\|
  \blank \|_2$. We will compactly write $\XSet \in \{ C(D), L^2(D)\}$.
\end{hypothesis}
 
With reference to \cref{eq:nonlinRNF}, the Banach space $\XSet$ sets the function space
for realisations of $v(\blank,\omega_v)$, the initial condition,
$g(\blank,t,\omega_g)$, the forcing at time $t$, and $u(\blank,t,\omega)$, the
solution at time $t$. Realisations of the synaptic kernel $w(\blank,\blank,\omega_w)$
are bivariate functions, and therefore require a separate function space dependent on
the choice of $\XSet$, denoted by $\WSet(\XSet)$, or simply $\WSet$, and defined as
\[
  \WSet(\XSet) := 
  \begin{cases}
    \displaystyle{
    \Big\{ 
      k \in C(D,L^1(D)) \colon 
      \lim_{h \to 0} \nu(h; k) = 0
    \Big\},
  }
    & \text{if $\XSet = C(D)$,} \\[1em] 
    L^2(D \times D) & \text{if $\XSet = L^2(D)$,} \\
  \end{cases}
\]
in which
\[
  \nu(h;k) = \max_{x,z \in D} \max_{\| x-z \|_2 \leq h } 
      \int_D |k(x, x') - k(z, x') |\, dx', \qquad h \in \RSet_{ \geq  0},
\]
with norm 
\[
  \| k \|_\WSet = 
  \begin{cases}
   \| k \|_{C(D,L^1(D))} & \text{if $\XSet = C(D)$,} \\
   \| k \|_{L^2(D \times D)} & \text{if $\XSet = L^2(D)$.} \\
  \end{cases}
\]

With these preparations \cref{eq:nonlinRNF} is written in operator form, as
as a Cauchy problem on $\XSet$ with random data, as follows:
\begin{equation}\label{eq:NRNFOp}
  \begin{aligned}
  & u'(t , \omega) = N(t, u(t , \omega) , \omega_w,\omega_f,\omega_g), \qquad t \in J,\\
  & u(0 , \omega) = v(\omega_v),
  \end{aligned}
\end{equation}
in which the operator $N$ is defined as
\[
  N \colon J \times \XSet \times \Omega_w \times \Omega_f \times \Omega_g  \to \XSet
  \qquad 
  N(t, u, \omega_w,\omega_f,\omega_g) = -u + W(\omega_w)F(u,\omega_f) + g(t,\omega_g),
\]
and realisations $W(\omega_w)$ are bounded linear operators associated to
the kernel realisations $w(\blank,\blank,\omega_w)$. Specifically one sets
$W(\omega_w) = H(w(\blank ,\blank ,\omega_w))$, where
\begin{equation}\label{eq:HDef}
  H \colon \WSet \to K(\XSet) \subset BL(\XSet), \qquad H(k)(v) = \int_{D}k(\blank,x')v(x') \,dx',
\end{equation}
in which $K(\XSet)$ is the space of compact operators on $\XSet$ to itself, hence
\begin{equation}\label{eq:WOp}
  W \colon \Omega_w \to K(\XSet) \subset BL(\XSet), \qquad  W(\omega_w)v
    = \int_{D}w(\blank,x',\omega_w)v(x') \,dx'.
\end{equation}
Further the realisations $F(\blank,\omega_f)$ come from the firing rate realisations
$f(\blank,\omega_f)$, as follows
\begin{equation}\label{eq:Nemytskii}
  F \colon \XSet \times \Omega_f \to \XSet,
  \qquad  
  F(u,\omega_f)(x) := 
  \begin{cases}
    u(x) & \text{if $\LSet = 1$,} \\
  f(u(x),\omega_f)& \text{if $\LSet = 0$.}
  \end{cases}
\end{equation}
In passing, we note that we often write $N(\blank,\blank,\omega)$, instead of
$N(\blank, \blank, \omega_w,\omega_f,\omega_g)$, for simplicity.

We work with $L^p$-regular input data, as indicated by the next hypothesis.
\begin{hypothesis}[$L^p$-regularity of random data]\label{hyp:randomDataLp} It holds
  that either
  \[
    \LSet = 1, 
    \quad 
    w \in L^\infty(\Omega_w,\WSet), 
    \quad 
    g \in L^p(\Omega_g,C^0(J,\XSet)), 
    \quad 
    v \in L^p(\Omega_v,\XSet),
  \]
  or
  \[
    \LSet = 0, 
    \quad 
    w \in L^p(\Omega_w,\WSet), 
    \quad 
    f \in L^p(\Omega_f,BC^1(\RSet)), 
    \quad 
    g \in L^p(\Omega_g,C^0(J,\XSet)), 
    \quad 
    v \in L^p(\Omega_v,\XSet),
  \]
\end{hypothesis}

Because \cref{hyp:randomDataLp} implies \cite[Hypothesis
3.5]{avitabile2024NeuralFields}, we can use directly~\cite[Theorems 4.2 and
4.5]{avitabile2024NeuralFields} on the well-posedness of \cref{eq:NRNFOp}, and
the measurability and regularity of its solutions.
\begin{theorem}[Abridginng {\cite[Theorems 4.2 and 4.5]{avitabile2024NeuralFields}}
  on solutions to neural fields with random data]
  Under \crefrange{hyp:domain}{hyp:randomDataLp} there exists a $\prob$-almost unique $u \in L^p(\Omega,C^1(J,\XSet))$
  satisfying \cref{eq:NRNFOp} $\prob$-almost surely, that is, satisfying $\prob( B_0 \cap B_J )=1$, with
    \[
        B_0 = \{ \omega \in \Omega \colon u(0, \omega) = v(\omega_v) \}, 
        \qquad  
        B_J = \{ \omega \in \Omega \colon 
            u'(\blank , \omega) = N(\blank , u(\blank , \omega) , \omega_w,\omega_f,\omega_g) \text{ on $J$} 
          \}.
    \]
\end{theorem}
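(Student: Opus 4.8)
Since the statement merely transcribes \cite[Theorems 4.2 and 4.5]{avitabile2024NeuralFields}, the plan is to reduce it to those results, and the only thing to check is that \cref{hyp:randomDataLp} implies the running assumption of the cited work, \cite[Hypothesis 3.5]{avitabile2024NeuralFields}. I would unpack the latter into its constituent requirements and match them term by term: the conditions on $g$ and $v$ are taken verbatim; the kernel operator $W(\omega_w)=H(w(\blank,\blank,\omega_w))$ of \cref{eq:HDef,eq:WOp} satisfies $\|W(\omega_w)\|_{BL(\XSet)}\le\|w(\blank,\blank,\omega_w)\|_{\WSet}$, so $w\in L^\infty(\Omega_w,\WSet)$ (resp.\ $w\in L^p(\Omega_w,\WSet)$) transfers the required integrability to $W$, while strong measurability of $W$ follows since $H$ is bounded linear; and, when $\LSet=0$, the random Nemytskii operator $\omega_f\mapsto F(\blank,\omega_f)$ of \cref{eq:Nemytskii} acts pointwise on $D$, so $f\in L^p(\Omega_f,BC^1(\RSet))$ yields both an $L^p_{\omega_f}$-integrable Lipschitz constant (bounded by $\|f(\blank,\omega_f)\|_{BC^1(\RSet)}$) and the needed measurability. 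Each of these is a one-line estimate from the definitions of $\WSet$ and $BC^1(\RSet)$.

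For completeness I would also recall the three-part skeleton behind the cited theorems. \emph{Pathwise well-posedness}: for $\prob$-a.e.\ fixed $\omega$ the right-hand side $u\mapsto N(t,u,\omega)=-u+W(\omega_w)F(u,\omega_f)+g(t,\omega_g)$ is globally Lipschitz on $\XSet$ uniformly in $t\in J$ — affine when $\LSet=1$, Lipschitz with constant $1+\|W(\omega_w)\|_{BL(\XSet)}\,\mathrm{Lip}(f(\blank,\omega_f))$ when $\LSet=0$ — and continuous in $t$, so Picard--Lindel\"of on $\XSet$ gives a unique $u(\blank,\omega)\in C^1(J,\XSet)$ solving \cref{eq:NRNFOp}, which is exactly $\prob(B_0\cap B_J)=1$. \emph{Measurability}: realising the solution as the fixed point of $\Phi_\omega(u)(t)=v(\omega_v)+\int_0^t N(s,u(s),\omega)\,ds$ on $C(J,\XSet)$ and using that the data tuple $\omega\mapsto(v(\omega_v),W(\omega_w),F(\blank,\omega_f),g(\blank,\blank,\omega_g))$ is strongly measurable, one shows inductively that the Picard iterates, and hence their limit $u(\blank,\omega)$ and its derivative $u'(\blank,\omega)=N(\blank,u(\blank,\omega),\omega)$, are strongly measurable, so $u$ is a $C^1(J,\XSet)$-valued strong random variable. \emph{Integrability}: a Gr\"onwall estimate for $\Phi_\omega$ gives a pathwise bound roughly of the form $\|u(\blank,\omega)\|_{C^1(J,\XSet)}\le C(1+\|v(\omega_v)\|_\XSet+\|g(\blank,\blank,\omega_g)\|_{C^0(J,\XSet)})\exp(CT(1+\|W(\omega_w)\|_{BL(\XSet)}))$ with $C$ deterministic; raising to the $p$th power, taking expectations, and exploiting independence of the factors together with the $L^p$- and $L^\infty$-bounds of \cref{hyp:randomDataLp} yields $u\in L^p(\Omega,C^1(J,\XSet))$, while $\prob$-almost uniqueness is inherited from pathwise uniqueness.

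The step I expect to be the real obstacle — and the one genuinely carried out in \cite{avitabile2024NeuralFields} rather than re-derived here — is the measurability claim, because when $\XSet=C(D)$ the supremum norm makes $\XSet$ non-separable, so strong measurability is not a free consequence of weak measurability: it must be established in the Bochner/Pettis framework, checking that the solution, built as an almost-sure limit of measurable iterates with essentially separable range, is a bona fide strong random variable. In the nonlinear case there is the additional subtlety of proving strong measurability of $\omega_f\mapsto F(\blank,\omega_f)$ as a map into the non-separable space of Lipschitz self-maps of $\XSet$, handled via $f\in L^p(\Omega_f,BC^1(\RSet))$ and the separability of $BC^1$ restricted to compact sets. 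Since \cite[Theorems 4.2 and 4.5]{avitabile2024NeuralFields} resolve exactly these points, the proof here collapses to the hypothesis check of the first paragraph.
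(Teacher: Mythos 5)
Your proposal is correct and matches the paper's treatment: the paper gives no proof beyond the remark that \cref{hyp:randomDataLp} implies \cite[Hypothesis 3.5]{avitabile2024NeuralFields}, after which \cite[Theorems 4.2 and 4.5]{avitabile2024NeuralFields} are invoked directly, which is exactly the reduction in your first paragraph. The additional sketch of the pathwise well-posedness, measurability, and integrability arguments is consistent with the cited work but is not re-derived in this paper.
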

% \begin{proof}
%   \cref{hyp:randomDataLp} implies \cite[Hypothesis 3.5]{avitabile2024NeuralFields}. The
%   conclusion follows directly from ~\cite[Theorems 4.2 and 4.5]{avitabile2024NeuralFields}.
% \end{proof}

As in the related PDE literature~\cite{
xiuModelingUncertaintySteady2002,
babuskaGalerkinFiniteElement2004a,
babuskaStochasticCollocationMethod2007,
xiuHighOrderCollocationMethods2005, smithUncertaintyQuantificationTheory2014,
Lord:2014ir, adcockSparsePolynomialApproximation2022}, 
we are interested in exploring problems with finite-dimensional random data.
Therefore, we assume that, for instance, the initial
condition $v(x,\omega_v)$ depends on $m_v$ random parameters, which we denote $Y_v$,
and similar for other sources of randomness in the model. We define
finite-dimensional noise as in \cite[Definition 9.38]{Lord:2014ir}, and we state our
hypothesis for finite-dimensional noise in neural fields.
\begin{definition}[$m$-dimensional, $p$th-order, $\BSet$-valued noise] 
  \label{def:finDimNoise}
  Let $m,k \in
  \NSet$, and $\BSet$ be a Banach space. Further, let $\{ Y_k \}$, $k \in \NSet_m$ be
  a collection of $m$ random variables $Y_k \colon \Omega \to \Gamma_k \subset
  \RSet$. A random variable $f \in L^p(\Omega,\BSet)$ of the form
  $f(\blank,Y(\omega))$, where
  $Y = (Y_1,\ldots,Y_m) \colon \Omega \to \Gamma = \Gamma_1 \times \dots \times
  \Gamma_m$, is called an
  $m$-dimensional, $p$th-order, $\BSet$-valued noise. We abbreviate this by saying that
  $f \in L^p(\Omega,\BSet)$ is $m$-dimensional noise.
\end{definition}

\begin{hypothesis}[Finite-dimensional noise random data]\label{hyp:finDimNoise}
  The random data in \cref{hyp:randomDataLp} is finite-dimensional noise of the form
  \begin{align*}
      & 
      w(\blank,\blank,\omega_w) = \tilde w(\blank,\blank,Y_w(\omega_w)),
      && 
      Y_w \colon \Omega_w \to \Gamma_w \subset \RSet^{m_w},
      && 
      Y_w \sim \rho_w,
      \\
      & 
      f(\blank,\omega_f) = \tilde f(\blank,Y_f(\omega_f)),
      && 
      Y_f \colon \Omega_f \to \Gamma_f \subset \RSet^{m_f},
      && 
      Y_f \sim \rho_f,
      \\
      & 
      g(\blank,\blank,\omega_g) = \tilde g(\blank,\blank,Y_g(\omega_g)),
      && 
      Y_g \colon \Omega_g \to \Gamma_g \subset \RSet^{m_g},
      && 
      Y_g \sim \rho_g,
      \\
      & 
      v(\blank,\omega_v) = \tilde v(\blank,Y_v(\omega_v)),
      && 
      Y_v \colon \Omega_v \to \Gamma_v \subset \RSet^{m_v},
      && 
      Y_v \sim \rho_v.
  \end{align*}
\end{hypothesis}

% \begin{example} 
%   Set $D=[-L, L] \subset \RSet$, and $\XSet = C(D)$. An initial
%   condition of the type
%   \[
%     v(x,\omega_v) = \frac{\alpha(\omega_v)}{\cosh(x - \beta(\omega_v))}, 
%     \qquad 
%     (x,\omega_v) \in D \times \Omega_v, 
%     \qquad 
%     \alpha,\beta \stackrel{i.i.d.}{\sim} \mathcal{U}[-1,1], 
%   \]
%   satisfies 
%   \[
%     v(x,\omega_v) = \hat v(x,\omega,Y_v(\omega_w)),
%     \qquad 
%     \hat
%     v(x,\omega_w,y) = \frac{y_{1}}{\cosh(x - y_{2})},
%     \qquad 
%     Y_w(\omega_v) = (\alpha(\omega_v),\beta(\omega_v)).
%   \]
%   Further, the realisations $v(\blank,\omega_v)$ are in $\XSet$ and
%   \[
%     \| v \|^2_{L^2(\Omega_v, \XSet)} 
%     = \frac{1}{4} \int_{-1}^{1} \int_{-1}^{1} 
%     \bigg[
%       \max_{x \in [-1,1]} \bigg| \frac{y_1}{\cosh(x-y_2)} \bigg|
%     \bigg]^2
%     \,dy_1 dy_2
%     = \frac{1}{4} \int_{-1}^{1} \int_{-1}^{1} 
%     y_1^2
%     \,dy_1 dy_2 < \infty,
%  \]
%  hence $v$ is 2-dimensional noise with values in $L^2(\Omega_v,\XSet)$. 
% \end{example}

The finite-dimensional noise assumption may hold for affine- or non-affine
parameter dependence~\cite{babuskaStochasticCollocationMethod2007}. We present two
examples for neural fields, which are later used for checking hypotheses in
\cref{sssec:checkAnaliticityOfData} and presenting numerical experiments in
\cref{sec:numerics}.

\begin{example}[Affine parameter dependence] \label{ex:affineRandomData}
  Consider initial conditions in the form of a
  Karhunen-Loeve expansion of the form
\begin{equation}\label{eq:vAnaExample}
  v(x,\omega_v) = b_0(x) + \sum_{j \in \NSet_{m_v}}  b_j(x) Y_j(\omega_v),
\quad Y_j \stackrel{i.i.d}{\sim}\mathcal{N}(0,1), 
\quad j \in \NSet_{m_v}, 
\quad x \in D = [-1,1] ,
\end{equation}
which is $m_v$-dimensional noise with $(y_1,\ldots,y_{m_v}) \in \Gamma_v = \RSet^m$, and in
which the spatial regularity is controlled by $\{b_j\}_{j \in \ZSet_m} \subset
\XSet$. In this case the mapping $y \mapsto \tilde v(\blank, y)$ is affine.
\end{example}

\begin{example}[Non-affine parameter dependence] \label{ex:nonAffineRandomData}
  Consider the random external input
\begin{equation}\label{eq:gAnaExample}
  g(x,t,\omega_g) = e^{t Y(\omega_g)} \bigl[ (Y(\omega_g)+1) \sin(4\pi x) + x/(2 \pi)\bigr]
  % \quad  Y \sim \mathcal{U}[\alpha,\beta],
  \quad  x \in D = [-1,1], \quad t \in J = [0,T],
\end{equation}
with $Y \sim \mathcal{U}[\alpha,\beta]$ or $Y \sim \mathcal{N}(0,\sigma)$. This
forcing is $1$-dimensional noise with $y \in \Gamma_g = [\alpha,\beta] \subset
\RSet$ or $\Gamma_g = \RSet$, and is used in the numerical example in
\cref{ssec:NumericalEx1}.
\end{example}

% We give examples of checking hypotheses for two types of random data: firstly, a
% truncated Karhunen-Loeve expansion of random initial condition in the form
% \begin{equation}\label{eq:vAnaExample}
%   v(x,\omega_v) = b_0(x) + \sum_{i \in \NSet_m}  b_i(x) Y_i(\omega_v),
% \quad Y_i \stackrel{i.i.d}{\sim}\mathcal{N}(0,1), 
% \quad i \in \NSet_m, 
% \quad x \in D = [-1,1] ,
% \end{equation}
% which is $m$-dimensional noise with $(y_1,\ldots,y_m) \in \Gamma = \RSet^m$, and in
% which the spatial regularity of the functions depends on $\{b_q\}_{q \in \ZSet_m}
% \subset \XSet$. In this case the mapping $y \mapsto v(\blank, y)$ is affine 
% (recall $v(x,\omega_v) = \tilde v(x,Y(\omega_v))$, and that we omit the tilde for
% simplicity).

% An example of mapping nonlinear in the parameters is given by the following random forcing
% \begin{equation}\label{eq:gAnaExample}
%   g(x,t,\omega_g) = e^{t Y(\omega_g)} \bigl[ (Y(\omega_g)+1) \sin(4\pi x) + x/(2 \pi)\bigr]
%   % \quad  Y \sim \mathcal{U}[\alpha,\beta],
%   \quad  x \in D = [-1,1],
% \end{equation}
% with $Y \sim \mathcal{U}[\alpha,\beta]$ or $Y \sim \mathcal{N}(0,\sigma)$. This
% forcing
% is used in the numerical example in \cref{subsec:NumericalEx1} which is $1$-dimensional noise with $y
% \in \Gamma = [\alpha,\beta] \subset \RSet$ or $\Gamma = \RSet$, respectively.

% \da[inline]{Make an example here. Maybe we should present a working example at the
% beginning of the paper, and use it throughout the article. Here we can show how
% finite-dimensional noise assumptions hold}

In addition to finite-dimensionality of the noise, \cref{hyp:finDimNoise} prescribes
that each random variable $ Y_\alpha $ has density $\rho_\alpha$, $\alpha \in
\USet$. It is possible to reformulate the
neural field problem under the finite-dimensional noise assumption: all random
variables are collected in a vector,
\[
  Y \colon \Omega \to \Gamma = \Gamma_1 \times \ldots \times \Gamma_m \subset
  \RSet^m, \qquad m = \sum_{\alpha \in \USet} m_\alpha,  
  \qquad
  Y \sim \rho = \prod_{\alpha \in \USet} \rho_\alpha,
\]
and one can express the neural field problem, in operator form, in terms of a
deterministic function $\hat u(\blank,\blank,y)$, using \textit{deterministic inputs} $\tilde w$,
$\tilde f$, $\tilde g$, and $\tilde v$ in place of the original inputs. We refer
to~\cite[Section 5]{avitabile2024NeuralFields} for the reformulation of the neural field
problem with finite-dimensional noise, which we present below. We highlight that
tildes or hats are henceforth omitted for notational simplicity, as the problem formulation below
can be taken as the starting point for investigating the problem in parametric form.
\begin{problem}[Neural field problem with finite-dimensional
  noise]\label{prob:FiniteDimNRNF}
  Fix $\LSet$, $w$, $g$, $v$, and if $\LSet = 0$, f. Given a density $\rho$ for the
  random variable $Y$, find $u \colon J \times \Gamma \to \XSet$ such that
\begin{equation}\label{eq:FinDimNRNFOp}
  \begin{aligned}
  & u'(t , y) = N(t, u(t , y) , y), 
  & t \in J, \\
  & u(0 , y) = v(y),
    &%\textrm{$\rho \, dy$-a.e. in $\Gamma$.} \\
  \end{aligned}
  \qquad
  \textrm{$\rho \, dy$-a.e. in $\Gamma$.} \\
\end{equation}
\end{problem}
The result below from \cite{avitabile2024NeuralFields}, addresses well-posedness and
regularity of the parametrised problem. The result is given in terms of Banach
space-valued functions on $(\Gamma, \calB(\RSet^m), \rho dy)$, for instance $v
\in L_\rho(\Gamma,\XSet) := L^p(\Gamma,\calB(\RSet^m),\rho dy, \XSet)$.
\begin{corollary}[Abridged {\cite[Corollary 5.1]{avitabile2024NeuralFields}} on
  $L^p_\rho$-regularity with finite-dimensional noise]
  \label{cor:uLRho}
  Under \crefrange{hyp:domain}{hyp:finDimNoise}, it holds $w \in
  L^\infty_\rho(\Gamma,\WSet)$ (if $\LSet=1)$ or $w \in
  L^p_\rho(\Gamma,\WSet)$ (if $\LSet=0)$, $f \in L^p_\rho(\Gamma,BC(\RSet))$, $g \in
  L^p_\rho(\Gamma, C^0(J,\XSet))$, and $v \in
  L^p_\rho(\Gamma,\XSet)$. Further, \cref{prob:FiniteDimNRNF} has a unique solution $u \in
  L^p_\rho(\Gamma,C^1(J,\XSet))$.
\end{corollary}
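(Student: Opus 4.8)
The plan is to deduce the statement from the abstract well-posedness and regularity result recalled above --- the abridged \cite[Theorems 4.2 and 4.5]{avitabile2024NeuralFields} --- by a change-of-variables argument that exploits the finite-dimensional noise structure. Under \cref{hyp:finDimNoise} all four data factor through the single random vector $Y=(Y_w,Y_f,Y_g,Y_v)\colon\Omega\to\Gamma$, which by independence has distribution $\rho\,dy$ on $(\Gamma,\calB(\RSet^m))$, i.e.\ $\rho\,dy$ is the push-forward of $\prob$ under $Y$. The argument then rests on the elementary fact that, for any Banach space $\BSet$ and strongly measurable $\psi\colon\Gamma\to\BSet$, the composition $\psi\circ Y$ is strongly measurable on $\Omega$ with
\[
  \|\psi\circ Y\|_{L^p(\Omega,\BSet)}=\|\psi\|_{L^p_\rho(\Gamma,\BSet)},\qquad p\in[1,\infty],
\]
by the change-of-variables formula for Bochner integrals.

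Applying this with $\BSet$ equal in turn to $\WSet$, $BC^1(\RSet)$, $C^0(J,\XSet)$ and $\XSet$, and $\psi$ equal to $\tilde w,\tilde f,\tilde g,\tilde v$ from \cref{hyp:finDimNoise}, the $L^p$-membership of $w,f,g,v$ granted by \cref{hyp:randomDataLp} is equivalent to the claimed $L^p_\rho(\Gamma,\blank)$-membership of the deterministic inputs, which is the first assertion. (Strong measurability of the $\tilde\alpha$ themselves, needed for this step, follows from strong measurability of the $\alpha$ together with the factorisation, or may be read into \cref{hyp:finDimNoise}.)

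For the solution, the abstract theorem produces a $\prob$-a.s.\ unique $u\in L^p(\Omega,C^1(J,\XSet))$ solving \cref{eq:NRNFOp}. I would then show that $u$ depends on $\omega$ only through $Y(\omega)$: for $\rho\,dy$-a.e.\ $y\in\Gamma$ the pair $\bigl(N(\blank,\blank,y),v(y)\bigr)$ defines a deterministic neural field Cauchy problem on $\XSet$ which, under \crefrange{hyp:domain}{hyp:randomDataLp}, has a unique $C^1(J,\XSet)$ solution $\hat u(\blank,y)$ (this is the abstract theorem specialised to a deterministic datum); one checks that $\omega\mapsto\hat u(\blank,Y(\omega))$ lies in $L^p(\Omega,C^1(J,\XSet))$, solves \cref{eq:NRNFOp} $\prob$-a.s., and hence equals $u$ by uniqueness. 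Equivalently, one may invoke the Doob--Dynkin lemma: the data are $\sigma(Y)$-measurable and the data-to-solution map is continuous (indeed locally Lipschitz, by the Gr\"onwall-type estimates underpinning the abstract theorem) into $C^1(J,\XSet)$, so $u$ is $\sigma(Y)$-measurable and factors as $u=\hat u\circ Y$. A final application of the displayed identity with $\BSet=C^1(J,\XSet)$ and $\psi=\hat u$ yields $\hat u\in L^p_\rho(\Gamma,C^1(J,\XSet))$; relabelling $\hat u$ as $u$ gives the solution of \cref{prob:FiniteDimNRNF}, and uniqueness descends from the $\prob$-a.s.\ uniqueness above (or from the pointwise-in-$y$ deterministic uniqueness).

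The one point beyond routine bookkeeping is the strong measurability of the parametrised solution $y\mapsto\hat u(\blank,y)$, which is what licenses the push-forward identity in the last step. In the $\XSet=C(D)$ case the target $C^1(J,\XSet)$ is non-separable, so this cannot be settled by a naive pointwise-limit construction; instead one combines the Pettis measurability theorem with the continuity of the solution operator and the measurability of $y\mapsto\bigl(w(y),f(y),g(y),v(y)\bigr)$ established above. Handling the ``$\rho\,dy$-a.e.\ in $\Gamma$'' qualifier of \cref{prob:FiniteDimNRNF} is then merely a matter of restricting to the full-measure set of $y$ on which the deterministic data meet \crefrange{hyp:domain}{hyp:randomDataLp} and the solution identity holds.
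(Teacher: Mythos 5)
The paper does not actually prove \cref{cor:uLRho}: it is imported, in abridged form, from the companion work \cite[Corollary 5.1]{avitabile2024NeuralFields}, so there is no in-paper argument to compare yours against. Your reconstruction is nevertheless correct and is the route one would expect that reference to take: push-forward of $\prob$ under $Y$ (using independence to identify the law of $Y$ with $\rho\,dy$) to transfer $L^p(\Omega,\BSet)$-membership of the data in \cref{hyp:randomDataLp} to $L^p_\rho(\Gamma,\BSet)$-membership of the deterministic representatives in \cref{hyp:finDimNoise}, followed by a Doob--Dynkin/uniqueness argument to factor the solution of \cref{eq:NRNFOp} through $Y$ and so obtain the solution of \cref{prob:FiniteDimNRNF}. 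The two points you single out as non-routine --- strong measurability of the representatives $\tilde\alpha$, and of $y\mapsto\hat u(\blank,y)$ when $C^1(J,\XSet)$ is non-separable --- are indeed the only delicate steps, and your proposed resolution (post-composing the strongly measurable, essentially separably valued data map with the locally Lipschitz data-to-solution map, which preserves strong measurability) is the standard and correct one; the first point is in practice absorbed into the definition of finite-dimensional noise, as you note.
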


\section{Spatial-projection stochastic-collocation scheme} 
\label{sec:numericalScheme}
We aim to construct a numerical approximation to the solution $u$ to
\cref{eq:FinDimNRNFOp}
with well-defined mean and variance and, using \cref{cor:uLRho}, we seek for an
approximation in a subspace of $L^2_\rho(\Gamma,C^1(J,\XSet))$.
We intend to treat the spatial coordinate $x$ and the stochastic
coordinate $y$ differently, so it is
natural to work in $C^1(J,\XSet) \otimes L^2_\rho(\Gamma)$, which is isometrically
isomorphic to the original space $L^2_\rho(\Gamma,C^1(J,\XSet))$ \cite[Sections
7.1--7.2]{defantTensorNormsOperator1992}\footnote{Applying strictly Defant and
  Floret's theory to our case, we should write
$C^1(J,\XSet) \hat \otimes_{\Delta_2} L^2_\rho(\Gamma)$. The subscript $\Delta_2$
indicates that norm in use the tensor product space, as we shall describe below; the
hat indicates the completion of the tensor product space $C^1(J,\XSet) \otimes_{\Delta_2}
L^2_\rho(\Gamma)$. Defant and Floret argue that, owing to the natural injection
\[
  \begin{aligned}
    \iota \colon C^1(J,\XSet) \otimes L^2_\rho(\Gamma) & \to L^2_\rho(\Gamma,C^1(J,\XSet)) \\
    A \otimes \psi &\mapsto A \psi(\blank)
  \end{aligned}
\]
one can place the norm $\| A \otimes \psi \|_{\Delta_2} := 
\Vert \iota (A \otimes \psi) \Vert_{L^2_\rho(\Gamma,C^1(J,\XSet))} = 
\Vert A \psi(\blank) \Vert_{L^2_\rho(\Gamma,C^1(J,\XSet))}$ on elements of the
tensor product space. Now, let $C^1(J,\XSet)
\otimes_{\Delta_2} L^2_\rho(\Gamma)$ be the space $C^1(J,\XSet) \otimes L^2_\rho(\Gamma)$
endowed with the norm $\| \blank \|_{\Delta_2}$. Defant and Floret prove that the
completion $C^1(J,\XSet) \hat \otimes_{\Delta_2} L^2_\rho(\Gamma)$ to
$C^1(J,\XSet) \otimes_{\Delta_2} L^2_\rho(\Gamma)$ is isometrically isomorphic to
$L^2(\Gamma,C^1(J,\XSet))$. In the main text, we write $C^1(J,\XSet) \otimes
L^2_\rho(\Gamma)$ for $C^1(J,\XSet) \hat \otimes_{\Delta_2} L^2_\rho(\Gamma)$.}. 
Since the two spaces can be identified,  a typical element of $u \in C^1(J,\XSet) \otimes
L^2_\rho(\Gamma)$, of the form $u = \sum_{j} A_j \otimes \psi_j = \sum_{j} A_j
\psi_j$, can be measured using the norm on $L^2_\rho(\Gamma,C^1(J,\XSet))$. 

We look for an approximation $u_{n,q}$ in $C^1(J,\XSet_n) \otimes
\calP_q(\Gamma)$, where $\XSet_n \subset \XSet$ and $\calP_q(\Gamma) \subset
L^2_\rho(\Gamma)$ are finite-dimensional subspaces, whose definition will be made
precise later, of dimension $s(n)$ and $d(q)$, respectively. The scheme proceeds in
two steps, which are now introduced in detail: 
% a spatial (collocation or orthogonal)
% projection~\cite{avitabileProjectionMethodsNeural2023} followed by a stochastic
% projection (collocation), and  \da[]{cite relevant papers}, as outlined below:
\begin{itemize}
  \item Step 1: construct an approximation $u_n(\blank,y)$ in $C^1(J,\XSet_n)$ to $u$ in
    $C^1(J,\XSet)$ by a collocation scheme with grid $\{ x_i \colon i \in
    \NSet_{s(n)}\} \subset D$
    \[
      u'_n(t,y)(x_i) = N(t,u_n(t,y))(x_i),
      \qquad 
      u_n(0,y)(x_i)
      \qquad 
      (t,i,y) \in J \times  \NSet_{s(n)} \times \Gamma
    \]
    or by a Galerkin scheme using test functions $\{ \phi_i \colon i \in
    \NSet_{s(n)}\} \subset \XSet_n$,
    \[
      \bigl\langle \phi_i , 
        u'_n(t,y)- N(t,u_n(t,y)) 
      \bigr\rangle_{\XSet} = 0,
      \qquad 
      \langle \phi_i, u_n(0,y) \rangle_{\XSet} = 0,
      \qquad 
      (t,i,y) \in J \times  \NSet_{s(n)} \times \Gamma.
    \]

  \item Step 2: Collocate one of the schemes in step 1 on the zeros of orthogonal
    polynomials $\{ y_k : k \in \NSet_{d(q)} \}$ and build $u_{n,q}$ using the
    collocated solutions
    \[
      u_{n,q}(t,y) = \sum_{k \in \NSet_{d(q)}} u_{n}(t,y_k) l_k(y),
    \]
    where $\{ l_k \} \subset \calP_q(\Gamma)$ are Lagrange polynomials with nodes
    $\{ y_k \}$.

\end{itemize}
% We now proceed with the description of each step separately.
\subsection{Step 1: spatial projection}
Schemes for neural fields of collocation and Galerkin type can be expressed using
projectors ~\cite{avitabileProjectionMethodsNeural2023}. We consider a family of subspaces $\{ \XSet_n \colon n \in \NSet \}
\subset \XSet$, $\XSet_n := \spn \{ \phi_j \colon j \in \NSet_{s(n)} \}$ with $\dim
\XSet_n =: s(n) \to \infty $ as $n \to \infty $ and such that $\overline{ \cup_{n \in
\NSet} \XSet_n} = \XSet$. On each $\XSet_n$ we place the norm $\| \blank \|_{\XSet}$.
A corresponding family of projectors $\{ P_n \colon n \in \NSet\}$ satisfies, for any
$n \in \NSet$, $P_n \in BL(\XSet,\XSet_n)$, $\| P_n \| \geq 1$, and $P_n v = v$ for
all $v \in \XSet_n$~\cite{atkinson2005theoretical}, with
\[
  P_n \colon \XSet \to \XSet_n
  \qquad 
  (P_n v)(x) = \sum_{j \in \NSet_{s(n)}} V_j \phi_j(x)
  \qquad 
  (x, n) \in D \times \NSet.
\]

Interpolatory projectors define collocation schemes: we set
$\XSet = C(D)$, $\phi_j = \ell_j$, the $j$th Lagrange polynomial with nodes $\{ x_j \}
\subset D$, leading to $V_j = v(x_j)$. Orthogonal projectors define Galerkin schemes:
we set $\XSet = L^2(D)$ and $V_j = \langle v, \phi_j
\rangle_{\XSet}$. For details and examples we refer
to~\cite{atkinson1997,Atkinson:1992du,avitabileProjectionMethodsNeural2023}. A
collocation or Galerkin scheme for \cref{eq:FinDimNRNFOp} is written in abstract form
as~\cite{avitabileProjectionMethodsNeural2023}
\begin{equation}
  \label{eq:unEquation}
  \begin{aligned}
  & u_n'(t , y) = P_nN(t, u_n(t , y) , y), 
  & t \in J, \\
  & u_n(0 , y) = P_n v(y),
  \end{aligned}
  \qquad
  \textrm{$\rho \, dy$-a.e. in $\Gamma$.} 
\end{equation}
Since the hypotheses of ~\cite[Corollary 6.2]{avitabile2024NeuralFields} are
satisfied, problem \cref{eq:unEquation} admits a unique solution $u_n \in
L^p_{\rho}(\Gamma,C^1(J,\XSet))$. We conclude this section with a result providing a
bound on the error $u - u_n$ in terms of the error made by the projector on the
random data. In the nonlinear case, such result follows directly from~\cite[Theorems
3.2-3.3]{avitabileProjectionMethodsNeural2023}. The result in the nonlinear case
relies on the boundedness of $f(u)$, which does not hold when $f(u)= u$. We present
below a similar result for the linear case, obtained by a small modification of the
results in~\cite{avitabileProjectionMethodsNeural2023}.

\begin{thmE}\label{thm:determErrorBound}
  If \crefrange{hyp:domain}{hyp:finDimNoise} hold for $\LSet = 1$, then for
  $\rho dy$-almost every $y \in \Gamma$
  \[
  \begin{aligned}
    & \| u(\blank ,y) - P_n u(\blank ,y) \|_{C^0(J,\XSet)} \leq  \alpha_n(y), \\
    & \| u(\blank ,y) - u_n(\blank ,y) \|_{C^0(J,\XSet)} \leq  e^{\beta_n(y_w)} 
            \| u(\blank ,y) - P_n u(\blank ,y) \|_{C^0(J,\XSet)},
  \end{aligned}
  \]
  with $\beta_n(y_w) = T \| P_n W(y_w) \|_{BL(\XSet)}$ and
  \[
    \begin{aligned}
    \alpha_n(y) = 
    \| v(y_v) -  P_n v(y_v)\|_{\XSet} 
    & + T \| W(y_w) - P_n W(y_w) \|_{BL(\XSet)}  \|  u(\blank ,y) \|_{C^0(J,\XSet)}\\
    & + T \| g(\blank, y_g) - P_n g(\blank , y_g)\|_{C^0(J,\XSet)}.
    \end{aligned}
  \]
\end{thmE}
\begin{proofE}
  Proceeding as in ~\cite[Equation 3.8]{avitabileProjectionMethodsNeural2023} with
  $f(u)= u$ we obtain
  \[
    u(t,y) - u_n(t,y) = u(t,y) - P_n u(t,y) 
      + \int_{0}^{T}e^{-(t-s)} P_n W(y_w) [u(s,y) - u_n(s,y)] \,ds.
  \]
  Taking norms we estimate
  \[
    \|  u(t,y) - u_n(t,y) \|_{\XSet} \leq  \|u(t,y) - P_n u(t,y) \|_{\XSet} 
    + \beta_n(y_w) \int_{0}^{T} \|u(s,y) - u_n(s,y)\|_{\XSet} \,ds,
  \]
  and Gr\"onwall inequality leads to
  \[
    \|  u(t,y) - u_n(t,y) \|_{\XSet} \leq  \|u(t,y) - P_n u(t,y) \|_{\XSet} 
    + \beta_n(y_w) \int_{0}^{T} e^{\beta_n(y_w) (t-s)} 
    \|u(s,y) - P_n u(s,y) \|_{\XSet}  \,ds,
  \] 
  hence taking the maximum over $t \in [0,T]$ we get
  \begin{equation}\label{eq:uMinusUn}
    \| u(\blank ,y) - u_n(\blank ,y) \|_{C^0(J,\XSet)}
    \leq 
      e^{\beta_n(y_w)}\| u(\blank ,y) - P_n u(\blank ,y) \|_{C^0(J,\XSet)}.
  \end{equation}
  Further, proceeding again as in~\cite[Proof of Theorem
  3.2]{avitabileProjectionMethodsNeural2023} for $f(u) = u$ we obtain
  \[
    \begin{aligned}
    u(t,y) - P_n u(t,y) = v(y) - P_n v(y_v) 
      & + \int_{0}^{T}e^{-(t-s)} \bigl(W(y_w) - P_nW(y_w)\bigr) u(t,y) \,ds\\
      & + \int_{0}^{T}e^{-(t-s)} g(s,y_g) - P_n g(s,y_g) \,ds,
    \end{aligned}
  \]
  and taking the $\| \blank  \|_{C^0(J,\XSet)}$ norm we estimate
  \[
    \| u_n(\blank ,y ) - P_n u(\blank ,y) \|_{C^0(J,\XSet)} \leq \alpha_n(y).
  \]
\end{proofE}

We conclude this section by recalling an $n$-dependent bound on the projected random
data, which holds because \cref{hyp:randomDataLp} implies~\cite[Hypothesis
3.5]{avitabile2024NeuralFields}, hence the statement is proved in \cite[Proposition
6.1]{avitabile2024NeuralFields}. 
\begin{proposition}[Adapted from {\cite[Proposition 6.1]{avitabile2024NeuralFields}}]\label{prop:kappaEst_Pn}
  Assume \crefrange{hyp:domain}{hyp:randomDataLp} and let
  \[
  \begin{aligned}
    & \kappa_{w,n}(\omega_w) :=
    \| P_n W(\omega_w) \|_{BL(\XSet,\XnSet)}, 
    && \kappa_{w}(\omega_w) :=
    \| w(\omega_w) \|_{\WSet}, 
    \\
    & \kappa_{g,n}(\omega_g) :=
    \| P_n g(\blank,\omega_g) \|_{C^0(J,\XSet)}, 
    && \kappa_{g}(\omega_g) :=
    \| g(\blank,\omega_g) \|_{C^0(J,\XSet)}, 
    \\
    & \kappa_{v,n}(\omega_v) :=
    \| P_n v(\omega_v) \|_{\XSet},
    && \kappa_{v}(\omega_v) :=
    \| v(\omega_v) \|_{\XSet}.
  \end{aligned}
  \]
  For any $n \in \NSet$ and  $\alpha \in \{w,g,v\}$ it holds
  \[
    \kappa_{\alpha,n} \leq \| P_n \| \kappa_\alpha \qquad \text{$\prob_\alpha$-almost surely}.
  \] 
\end{proposition}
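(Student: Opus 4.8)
The plan is to establish the three inequalities separately, one for each $\alpha \in \{v,g,w\}$, in increasing order of difficulty. In every case $\kappa_{\alpha,n}$ is obtained from $\kappa_\alpha$ simply by inserting the bounded linear projector $P_n$, so the estimate is in essence submultiplicativity of the operator norm, and the only genuinely probabilistic ingredient is that \cref{hyp:randomDataLp} guarantees the relevant realisations lie in the correct Banach space almost surely. I would start with $\alpha = v$: by \cref{hyp:randomDataLp} one has $v \in L^p(\Omega_v,\XSet)$, hence $v(\omega_v) \in \XSet$ for $\prob_v$-a.e.\ $\omega_v$; for such $\omega_v$, $\kappa_{v,n}(\omega_v) = \| P_n v(\omega_v) \|_\XSet \le \| P_n \| \, \| v(\omega_v) \|_\XSet = \| P_n \| \kappa_v(\omega_v)$, using that the norm on $\XnSet$ is the restriction of $\| \blank \|_\XSet$.

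For $\alpha = g$, \cref{hyp:randomDataLp} gives $g(\blank,\omega_g) \in C^0(J,\XSet)$ for $\prob_g$-a.e.\ $\omega_g$. Fixing such an $\omega_g$, continuity of $P_n$ implies $t \mapsto P_n g(t,\omega_g)$ is continuous on the compact interval $J$, so the maximum below is attained; and pointwise $\| P_n g(t,\omega_g) \|_\XSet \le \| P_n \| \, \| g(t,\omega_g) \|_\XSet$ for every $t \in J$. Taking the maximum over $t$ yields $\kappa_{g,n}(\omega_g) = \max_{t \in J} \| P_n g(t,\omega_g) \|_\XSet \le \| P_n \| \max_{t \in J} \| g(t,\omega_g) \|_\XSet = \| P_n \| \kappa_g(\omega_g)$.

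The case $\alpha = w$ carries the only step that is not immediate, and it is the one I would treat last. Here $W(\omega_w) = H(w(\blank,\blank,\omega_w))$ with $H$ as in \cref{eq:HDef}, so it suffices to prove the $n$-independent bound $\| W(\omega_w) \|_{BL(\XSet)} \le \| w(\blank,\blank,\omega_w) \|_\WSet = \kappa_w(\omega_w)$; composing with $P_n$ then gives $\kappa_{w,n}(\omega_w) = \| P_n W(\omega_w) \|_{BL(\XSet,\XnSet)} \le \| P_n \| \, \| W(\omega_w) \|_{BL(\XSet)} \le \| P_n \| \kappa_w(\omega_w)$, valid $\prob_w$-a.s.\ because $w(\blank,\blank,\omega_w) \in \WSet$ a.s.\ under \cref{hyp:randomDataLp} (for $\LSet = 1$ and for $\LSet = 0$). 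The estimate $\| H(k) \|_{BL(\XSet)} \le \| k \|_\WSet$ has to be checked for each phase space: for $\XSet = C(D)$, $|H(k)v(x)| \le \| v \|_\infty \int_D |k(x,x')| \, dx' \le \| v \|_\infty \| k \|_{C(D,L^1(D))}$ for every $x$, and the bound follows after taking suprema; for $\XSet = L^2(D)$, Cauchy--Schwarz in the $x'$ variable followed by integration in $x$ gives $\| H(k) v \|_2^2 \le \| v \|_2^2 \, \| k \|_{L^2(D\times D)}^2$, so again $\| H(k) \|_{BL(\XSet)} \le \| k \|_\WSet$. Equivalently, since \cref{hyp:randomDataLp} implies \cite[Hypothesis 3.5]{avitabile2024NeuralFields}, the full statement can be read off \cite[Proposition 6.1]{avitabile2024NeuralFields}; the computation just sketched is simply an unpacking of that reference, and no essential obstacle arises beyond the two integral-operator norm estimates above.
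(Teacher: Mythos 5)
Your proposal is correct and follows the only natural route: the paper itself gives no inline proof but defers entirely to the cited \cite[Proposition 6.1]{avitabile2024NeuralFields}, and your argument — submultiplicativity of the operator norm for $v$ and $g$, plus the kernel-operator bound $\| H(k) \|_{BL(\XSet)} \leq \| k \|_{\WSet}$ verified separately for $\XSet = C(D)$ and $\XSet = L^2(D)$ — is precisely the standard unpacking of that reference. No gaps; the two integral-operator estimates are the only nontrivial steps and both are carried out correctly.
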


\begin{remark}\label{rem:finDiffNoisePn}
\Cref{prop:kappaEst_Pn} can also be
expressed under the finite-dimensional noise assumption. By the finite-dimensional
noise assumption, for any $\alpha \in \{w,g,v\}$ and $n \in \NSet$ there exist
deterministic functions $\tilde \kappa_\alpha, \tilde \kappa_{\alpha,n} \colon \Gamma
\to \RSet_{\geq 0}$ such that $\kappa_\alpha(\omega_\alpha)= \tilde
\kappa_\alpha(Y_\alpha(\omega_\alpha))$ and $\kappa_{\alpha,n}(\omega_\alpha)= \tilde
\kappa_{\alpha,n}(Y_\alpha(\omega_\alpha))$, respectively. For instance it holds
$\kappa_w(\omega_w) = \| P_n v(\omega_v) \|_{\XSet} =
\| P_n \tilde v(Y_v(\omega_w)) \|_{\XSet} =: \tilde \kappa_w(Y_w(\omega_w)) $
and similar identities are true for the other random data. 
\end{remark}

\subsection{Step 2: stochastic collocation} After a spatial projection has been
derived, a stochastic collocation method is constructed using an interpolatory
projector (and hence collocation) in the variable $y = (y_1,\ldots,y_m) \in \Gamma
\subset \RSet^m$. To describe, and later
analyse, this method we follow closely 
\cite{babuskaStochasticCollocationMethod2007}. We aim to construct a basis for the
space $\calP_q(\Gamma) \subset L^2_\rho(\Gamma)$. The function space
$\calP_q(\Gamma)$ is the tensor product of spans of polynomials of degree at most
$q=(q_1,\ldots,q_m)$,
\[
  \calP_q(\Gamma) = \bigotimes_{i \in \NSet_m} \calP_{q_i}(\Gamma_i), 
  \qquad
  \calP_{q_i}(\Gamma_i) = \spn\{ 1, y_i, y_i^2, \ldots, y_i^{q_i}\} = \spn\{y_i^j
  \colon j \in \ZSet_{q_i+1} \},
\]
from which we deduce
\[
  \dim \calP_{q_i}(\Gamma_i) = q_i+1, \quad i \in \NSet_m, \qquad 
  \dim \calP_{q}(\Gamma) = \prod_{i \in \NSet_m} (q_i+1) =: d(q).
\]
A basis for $\calP_{q}(\Gamma)$ is expressed in terms of bases for the spaces
$\calP_{q_i}(\Gamma_i)$. For each $i\in\mathbb{N}_m$ we introduce $q_i+1$ points 
\(
 \{y_{i,0},y_{i,2},...,y_{i,q_i}\} \subset \Gamma_i,
\)
and corresponding Lagrange polynomials
\[
  l_{i,j} \in \calP_{q_i}(\Gamma_i), \qquad l_{i,j}(y_{i,r}) = \delta_{j,r} \qquad
  j,r \in \ZSet_{q_i+1}.
\]
We aim to enumerate collocation points $\{ y_k \}_k \subset \Gamma$ with a single index $k
\in \NSet_{d(q)}$: to any vector of $m$ indices $(k_1, \ldots, k_m)$, where $k_i \in \ZSet_{q_i}$
and $i\in\mathbb{N}_m$, we associate the global index
\[
  k = (k_1+1) + q_1 k_2 + q_1q_2k_3 + \dots + q_1 \cdots q_{m-1}k_m,
\]
and define points $\{  y_k \}_k \subset \Gamma$ by
\[
  y_k := (y_{1,k_1}, \ldots, y_{m,k_m}) \in \Gamma, \qquad k\in\mathbb{N}_{d(q)}.
\]

Since $\calP_q(\Gamma)$ is the tensor product of span of polynomials in
$\calP_{q_i}(\Gamma_i)$ we set\footnote{Note that
  we use $\ell_j$ for the spatial Lagrange basis on $D$, and $l_j$
for the stochastic Lagrange basis on $\Gamma$, because their domain and interpolation
points differ.}
\[
  l_{k}(y) := \prod_{i \in \NSet_m} l_{i,k_i}(y_i).
\]
For a function $v \colon \Gamma \to \RSet$, we consider interpolation operator
\[
  I_q v = \sum_{k \in \NSet_{d(q)}} v(y_k) l_k(y),
\]
and hence the sought approximation $u_{n,q}$ to $u_n$ is of the form
\begin{equation}\label{eq:unqDef}
  u_{n,q}(x,t,y) := (I_q u_n(x,t,\blank))(y) = \sum_{k \in \mathbb{N}_{d(q)}}u_n(x,t,y_k) l_k(y),
\end{equation}
where $u_n(x,t,y_k)$ is the solution to \cref{eq:unEquation} for $y = y_k$. 

% In this
% way we have also defined a projector
% on a subspace of $L^2_\rho(\Gamma)$, whose definition we defer to a later stage, to
% $\calP_q(\Gamma)$. 

\subsection{Discrete scheme}\label{ssec:DiscScheme}
Finally, for an implementable scheme one must select:
\begin{enumerate}
  \item A quadrature scheme to approximate the integrals in the variable $y$. In
    \cite[Section 2.1]{babuskaStochasticCollocationMethod2007} a Gauss quadrature
    scheme associated with the interpolatory projector $I_q$ is discussed. 
    % \da[inline]{Claim}
    % In certain cases this quadrature gives exact results for the linear problem 
  \item A quadrature scheme to approximate the integrals in the variable $x$,
    associated to the spatial projector $P_n$. A discussion on several pairs of
    quadrature schemes and interpolatory or orthogonal projectors is given in
    \cite{avitabileProjectionMethodsNeural2023}.
  \item A time stepper for realisations of the neural field equation. Error
    estimates combining the effects of spatial and temporal discretisations are
    possible (see for instance ~\cite[Section
    5]{avitabileProjectionMethodsNeural2023}).
\end{enumerate}
Selecting quadratures and time steppers lead to \textit{discrete
schemes}. In this paper we present a convergence analysis for the scheme in operator
form, and supporting numerical simulations in which spatial quadrature and
time-stepping errors are negligible or of the same magnitude with respect to the
other sources of errors. 

\subsection{Mean, variance, and considerations on errors}\label{ssec:meanErrors}
Once a pointwise approximation $u_{n,q}$ to $u$ is available, we can approximate
quantities of interests associated to it. For instance the mean is estimated using
\begin{equation}\label{eq:ERhoU}
  (\mean_\rho u)(x,t) = \int_\Gamma u(x,t,y) \rho(y) dy \approx 
  \int_\Gamma u_{n,q}(x,t,y) \rho(y) dy =
  (\mean_\rho u_{n,q})(x,t).
\end{equation}
In passing, we note that $\mean_\rho u$ is the expectation of
$u(\blank,\blank,\omega)$, because
\[
  (\mean u)(x,t) = \int_\Omega u(x,t,\omega)\, d\prob(\omega) 
  = 
  \int_\Gamma \hat u(x,t,y) \rho(y)\, dy = (\mean_\rho \hat u)(x,t)
\]
and we have been omitting the hat in \cref{eq:ERhoU} (and ever since we introduced
them, see discussion after \cref{ex:nonAffineRandomData}). The functions $\mean_\rho u$ and
$\mean_\rho u_{n,q}$ are $C^1(J,\XSet)$ and $C^1(J,\XSet_n)$,
respectively, but for simplicity we derive bounds the error of the numerical scheme
in the $C^0$ norm, namely $\| \mean_\rho (u - u_{n,q}) \|_{C^0(J,\XSet)}$ and 
$\| \vari_\rho u - \vari_\rho u_{n,q} \|_{C^0(J,\XSet)}$. In \cref{ssec:totalError}
we show that both errors are controlled by $\| u - u_{n,q} \|_{C(J,\XSet) \otimes
L^2_\rho(\Gamma)}$, which is thus the focus of the analysis in
\cref{sec:linearAnalysis}.

\section{Error analysis for RLNFs ($\LSet =1$)}\label{sec:linearAnalysis}
We aim to study the convergence error for the scheme in \cref{sec:numericalScheme},
as described in \cref{ssec:meanErrors}. Since $u - u_{n,q} = u - u_n + u_n -
u_{n,q}$, we expect that the error splits into a spatial projection error, and a stochastic
collocation error. For the former we make use of results in
\cite{avitabileProjectionMethodsNeural2023}, from which we inherit convergence rates as $n \to
\infty$. For the latter we prove that $u_n-u_{n,q}$ decays
asymptotically exponentially in $q$ using an argument along the lines of
\cite{Zhang.2012}, which uses \cite[Section 4]{babuskaStochasticCollocationMethod2007} and relies on the analyticity of $u_n$
with respect to the different components of the vector $y \in \Gamma$.

As in \cite{babuskaStochasticCollocationMethod2007,Zhang.2012} we introduce a
suitable weighted space of continuous functions on $\Gamma =
\bigtimes_{i \in  \NSet_m} \Gamma_i$, in which each $\Gamma_i$ can be bounded or
unbounded. 
introduce the weights $\sigma(y)=\prod_{i=1}^m \sigma_i(y_i)$ where, 
\begin{equation}\label{eq:weights}
	\sigma_i(y_i)=
	\begin{cases}
		1  & \textrm{if $\Gamma_i$ is bounded,} \\ 
		e^{-\eta_i|y_i|}  & \textrm{if $\Gamma_i$ is unbounded,} \\
	\end{cases},
  \qquad \eta_i \in \RSet_{ \geq 0}, \qquad i \in \NSet_m,
\end{equation}
we recall \cref{eq:WeightSpace,eq:WeightCkSpace} for the definition of the weighted
function spaces $C^0_\sigma(\Gamma,\BSet)$ and $C^k_\sigma(\Gamma,\BSet)$, respectively.

To establish an error bound of the semidiscrete solution in elliptic problems on
bounded as well as unbounded $\Gamma_i$, the results in
% \cite{babuskaStochasticCollocationMethod2007} 
% require
% In the latter case, we require \cref{hyp:subgaussian}, which constraints
% the growth at infinity of data and of the joint probability density $\rho$. 
% We
% the
% authors of 
\cite{babuskaStochasticCollocationMethod2007} make use of the
continuous embedding of $C^0_\sigma(\Gamma,\BSet)$ into $L^2_\rho(\Gamma,\BSet)$, for
which it is necessary to demand that the joint density $\rho$ decays sufficiently
fast as $|y| \to \infty$ \cite[page 1015]{babuskaStochasticCollocationMethod2007}. We
use this result in the main theorem at the end of the section,
\cref{thm:totalBound}, but we anticipate here the necessary assumption for that step.
\begin{hypothesis}[Sub-gaussianity of the joint density]\label{hyp:subgaussian}
    The joint probability density $\rho(y)$ satisfies
      \[
        \rho(y) \leq \kappa_\rho \exp \bigg( - \sum_{i \in \NSet_m} (\zeta_i y_i)^2 \bigg),
      \]
      where $\kappa_\rho > 0$, and for any $i \in \NSet_m$ the constant $\zeta_i$ is
      positive if $\Gamma_i$ is unbounded, and null otherwise.
\end{hypothesis}

% \subsection{Regularity of solution of LRNF with respect to random
% parameters}\label{ssec:RegularLRNF}
In the rest of the section we first show that continuity of the data with respect to
the variable $y$ implies continuity of the solution $u_n$. Then, adding further
regularity assumptions on the random fields $w$, $g$ and $v$, we prove analyticity of
$u_n$. The proofs in this
section combine results for neural fields with random data, developed
in~\cite{avitabile2024NeuralFields}, with analyticity results
in~\cite{babuskaStochasticCollocationMethod2007}. 
\subsection{$C^0_\sigma$-regularity of the solution to the RLNF}\label{ssec:C0RegularLRNF}

\begin{lemma}[$C^0_\sigma$-regularity for linear problems with finite-dimensional
  noise] \label{lemma:C0SigmaRegularity} 
  Assume 
  \crefrange{hyp:domain}{hyp:randomDataLp} 
  (general hypotheses) and \cref{hyp:finDimNoise} (finite-dimensional noise) hold for
  $\LSet=1$. If the random data satisfies $w \in
  BC(\Gamma,\WSet)$, $g \in C^0_\sigma(\Gamma,C^0(J,\XSet))$, and $v \in
  C^0_\sigma(\Gamma,\XSet)$, then the solution $u$ to \cref{eq:NRNFOp} is in
  $C^0_\sigma(\Gamma,C^1(J,\XSet))$ and the solution $u_n$ to \cref{eq:unEquation} is
  in $C^0_\sigma(\Gamma,C^1(J,\XSet_n))$.
\end{lemma}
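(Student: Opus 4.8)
The plan is to exploit that, for $\LSet = 1$, problem \cref{eq:NRNFOp} is, for each fixed $y$, a \emph{linear} ODE on $\XSet$ driven by the bounded generator $A(y_w) := -I + W(y_w) \in BL(\XSet)$, so its solution has the explicit variation-of-constants representation $u(t,y) = e^{tA(y_w)} v(y_v) + \int_0^t e^{(t-s)A(y_w)} g(s,y_g)\,ds$; the semidiscrete solution $u_n$ of \cref{eq:unEquation} admits the analogous formula with $A(y_w)$ replaced by $-I + P_n W(y_w)$ acting on $\XSet_n$ (using $P_n u_n = u_n$) and $v(y_v), g(s,y_g)$ replaced by $P_n v(y_v), P_n g(s,y_g)$. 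Existence and uniqueness of $u$ and $u_n$ are already guaranteed by \cref{cor:uLRho} and \cite[Corollary 6.2]{avitabile2024NeuralFields}, so the task is only to upgrade their regularity in $y$ to $C^0_\sigma$.

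\textbf{Continuity in $y$.} First I would record that $y_w \mapsto W(y_w) = H(w(\blank,\blank,y_w))$ is continuous and bounded on $\Gamma$, since $H\colon\WSet\to BL(\XSet)$ is bounded linear and $w\in BC(\Gamma,\WSet)$; write $\bar W := \sup_{y\in\Gamma}\| W(y_w)\|_{BL(\XSet)}<\infty$. Because $\exp$ is entire on $BL(\XSet)$, the identity $e^{tA}-e^{tA'}=\int_0^t e^{(t-s)A}(A-A')e^{sA'}\,ds$ gives, uniformly for $t\in J$, a Lipschitz bound of the form $\| e^{tA(y_w)}-e^{tA(y_w')}\|_{BL(\XSet)}\le Te^{T(1+\bar W)}\| W(y_w)-W(y_w')\|_{BL(\XSet)}$, so $y_w\mapsto\bigl(t\mapsto e^{tA(y_w)}\bigr)$ is continuous into $C^0(J,BL(\XSet))$. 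Feeding this, together with the continuity of $y\mapsto v(y_v)$ and $y\mapsto g(\blank,y_g)$ (inherited from $v\in C^0_\sigma(\Gamma,\XSet)$ and $g\in C^0_\sigma(\Gamma,C^0(J,\XSet))$), into the representation formula yields $u\in C^0(\Gamma,C^0(J,\XSet))$; reading off $u'(t,y)=-u(t,y)+W(y_w)u(t,y)+g(t,y_g)$ then shows $u\in C^0(\Gamma,C^1(J,\XSet))$. The same argument, with the fixed bounded operator $P_n$ inserted everywhere, gives $u_n\in C^0(\Gamma,C^1(J,\XSet_n))$.

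\textbf{Weighted bound.} The core estimate is the pointwise-in-$y$ a priori bound obtained either from $\| e^{tA(y_w)}\|_{BL(\XSet)}\le e^{-t}e^{t\bar W}\le e^{T\bar W}$ in the representation formula, or equivalently from a Gr\"onwall inequality applied to \cref{eq:NRNFOp}:
\[
  \| u(\blank,y)\|_{C^0(J,\XSet)}\le e^{T\bar W}\bigl(\| v(y_v)\|_{\XSet}+T\,\| g(\blank,y_g)\|_{C^0(J,\XSet)}\bigr).
\]
I would then multiply by $\sigma(y)=\prod_{i\in\NSet_m}\sigma_i(y_i)$ and use that each $\sigma_i\le 1$, so that $\sigma(y)$ is dominated by the sub-products of weight factors attached to the $Y_v$- and $Y_g$-blocks; this bounds the right-hand side by $e^{T\bar W}\bigl(\| v\|_{C^0_\sigma(\Gamma,\XSet)}+T\,\| g\|_{C^0_\sigma(\Gamma,C^0(J,\XSet))}\bigr)=:M<\infty$, independent of $y$. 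A final application of the equation for $u'$ converts this into $\sigma(y)\| u'(\blank,y)\|_{C^0(J,\XSet)}\le (1+\bar W)M+\| g\|_{C^0_\sigma(\Gamma,C^0(J,\XSet))}$, giving $u\in C^0_\sigma(\Gamma,C^1(J,\XSet))$. For $u_n$ the identical chain works with $\bar W$ replaced by $\| P_n\|\bar W$ and $\| v(y_v)\|,\| g(\blank,y_g)\|$ replaced by $\|P_n\|$ times these quantities; the constants then depend on the (fixed) $n$, which is all that is needed here.

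\textbf{Main obstacle.} The only point requiring care is the weighted bound: one must realise that the hypothesis $w\in BC(\Gamma,\WSet)$ — boundedness, not merely $C^0_\sigma$-regularity — is precisely what keeps the Gr\"onwall/semigroup constant $e^{T\bar W}$ independent of $y$, so that no weight is needed on the $Y_w$-block; and that the multiplicative weight $\sigma$ is controlled by the block weights because every factor $\sigma_i$ lies in $(0,1]$. Once these two observations are in place, the rest is routine bookkeeping with the variation-of-constants formula and the boundedness of $H$ and $P_n$.
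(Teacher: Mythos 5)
Your proposal is correct and follows essentially the same route as the paper: the variation-of-constants representation with the bounded generator $-\id + P_nW(y_w)$, a semigroup-perturbation identity (the paper isolates this as \cref{lem:semigroupPert}) to get continuity of $y \mapsto u_n(\blank,y)$, and then a Grönwall/semigroup a priori bound whose constant is $y$-independent thanks to $w \in BC(\Gamma,\WSet)$, after which multiplying by $\sigma(y)$ and using $\sigma_i \le 1$ transfers the weight onto the $v$- and $g$-norms exactly as in the paper's \cref{eq:MTildeBounds}. The only cosmetic difference is that the paper imports the a priori bounds $\tilde M_{0,n},\tilde M_{1,n}$ from \cite[Theorem 6.3]{avitabile2024NeuralFields} rather than rederiving them, and treats $u$ by the formal substitution $P_n \mapsto \id$ as you do.
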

\begin{proof} We prove the statement for the solution $u_n$ to
  \cref{eq:unEquation}, which for $\LSet=1$ reads
  \[
    \begin{aligned}
      & u'_n(t,y) = A_n(y) u_n(t,y) + P_n g(t,y), \qquad t \in [0,T], \\
      & u_n(0,y) = P_n v(y),
    \end{aligned}
    \qquad \textrm{$\rho dy$-a.e. in $\Gamma$,}
  \]
  where $A_n(y) = -\id + P_n W(y)$. All the steps below can be straightforwardly
  adapted for the solution $u$ to \cref{eq:NRNFOp} (and in fact the formal
  substitutions $P_n \mapsto \id$, $\XSet_n \mapsto \XSet$, and $u_n \mapsto u$ work to that effect).
  For any fixed $y \in \Gamma$ and $n \in  \NSet$, the operator $A_n(y)$ is in
  $BL(\XSet_n)$, and therefore it generates the uniformly continuous semigroup $(e^{t
  A_n(y)})_{t\geq0}$ on the Banach space $\XSet_n \subset \XSet$. Further, 
  % Further, owing to the boundedness of $w$, the operator norm of
  % $A_n(y)$ is bounded uniformly in $y$, 
  % \[
  %   \| A_n(y) \| \leq 1 + \| P_n \| \| w \|_{L^\infty_\rho(\Gamma,\WSet)}  
  %               =: \beta_n, \qquad n \in \NSet, \quad y \in \Gamma.
  % \]
  for any fixed $y\in \Gamma$, $n \in \NSet$, the linear problem above admits a
  unique strong solution \cite[Theorem 13.24]{vanneervenFunctionalAnalysis2022} of the form
  \[
    u_n(t,y) = S_n(t,y)P_n v(y) + \int_{0}^{t} S_n(t-s,y)P_n g(s,y) \,ds, \qquad
    S_n(t,y):= e^{tA_n(y)} \quad t \in J.
  \]
  Further $u_n(t,y)$ is a classical solution because $t \mapsto u_n(t,y)$ is
  differentiable with derivative $A_n(y)u_n(t,y) + P_n g(t,y)$. We thus introduce the
  mapping 
  \begin{equation}\label{eq:psi_nMap}
    \psi_n \colon \Gamma \to C^1(J,\XSet_n), \qquad y \mapsto u_n(\blank,y),
  \end{equation}
  and we aim to prove that, for any $n\in \NSet$, the mapping $\psi_n$
  % \begin{equation}\label{eq:psi_nMap}
  %   \psi_n \colon \Gamma \to C^1(J,\XSet_n), \qquad y \mapsto u_n(\blank,y),
  % \end{equation}
  is in $C^0_\sigma(\Gamma,C^1(J,\XSet_n))$. To achieve this we show that: $\psi_n$ is
  continuous on $\Gamma$ to $C^1(J,\XSet_n)$ (Step 1), and then that $\sigma(y) \| \psi_n(y)
  \|_{C^1(J,\XSet)}$ is bounded on $\Gamma$ (Step 2).
%  \da[inline]{So far it looks like we can skip step 1, and not use the uniform
%  boundedness of $A(y)$ in $y$. We will have to see how this pans out. Perhaps
%uniformity could be necessary in the proof that $\sigma(y) \| \phi_n(y) \|$ is
%bounded?}
%
%  \textit{Step 1: $\psi_n$ is an $L^2$ function on $\Gamma$ to $C^1(J,\XSet_n)$.} The
%  sub-gaussianity of $\rho$, \cref{hyp:subgaussian}, implies that for a Banach space
%  $V$ it holds \cite[page 1015]{babuskaStochasticCollocationMethod2007}
%  \[
%    C^0_\sigma(\Gamma,V) \hookrightarrow L^2_\rho(\Gamma,V), 
%  \] 
%  hence $g \in L^2_\rho(\Gamma,C^0(J,\XSet))$, and $v \in L^2_\rho(\Gamma,\XSet)$. In
%  addition, our hypotheses on $w$ guarantee $w \in L^\infty_\rho(\Gamma,\WSet)$, and
%  using \cref{cor:uLRho_Pn} we conclude $u_n \in L^2_\rho(\Gamma,C^1(J,\XSet_n))$,
%  and hence $\psi_n$ is square integrable on $\Gamma$ to $C^1(J,\XSet_n)$.

  \textit{Step 1: the mapping $\psi_n$ is continuous on $\Gamma$ to
  $C^1(J,\XSet_n)$.} To prove that $\psi_n$ is continuous on $\Gamma$ we show that
  for any  $y \in \Gamma$ and $n \in \NSet$
  \[
    \lim_{y_0 \to y} \| \psi_n(y_0) - \psi_n(y) \|_{C^1(J,\XSet)} = 0.
  \]
  Starting from the bound
  \[
    \| \psi_n(y_0) - \psi_n(y) \|_{C^1(J,\XSet)} \leq (I) + (II) + (III)
  \]
  with
  \[
    \begin{aligned}
      & (I) := \sup_{t \in J}\| u_n(t,y_0) - u_n(t,y) \|_{\XSet}, \\
      & (II) := \sup_{t \in J}\| A_n(y_0) u_n(t,y_0) - A_n(y) u_n(t,y) \|_{\XSet}, \\
      & (III) := \sup_{t \in J}\| P_n g(t,y_0) - P_n g(t,y) \|_{\XSet}.
    \end{aligned}
  \]
  it suffices to prove that, for fixed $y \in \Gamma$ and $n \in \NSet$, it holds $(I), (II),
  (III) \to 0$ as $y_0 \to y$.
  For the first term we estimate
  \begin{align*}
      (I)  & \leq 
      \begin{aligned}[t]
        \sup_{t \in J} \| S_n(t,y_0) &P_n v(y_0)  - S_n(t,y) P_n v(y) \| \\
        & + \sup_{t \in J} \int_{0}^{T}\| S_n(t-s,y_0) P_n g(s,y_0) - S_n(t-s,y) P_n
        g(s,y) \| \,d s
      \end{aligned} \\
           & =: (I_1)+(I_2).
  \end{align*}
  Using \cref{lem:semigroupPert} and the fact that $\|  e^{tA} \| \leq e^{t \| A
  \|}$, if $A \in BL(\XSet)$, we obtain
  \begin{align*}
    (I_1) & \leq
       \bigg( \sup_{t \in J} \| S_n(t,y_0) \|\bigg) \| P_n v(y_0) - P_n v(y) \|_{\XSet} \\
      & \qquad + \bigg( \sup_{t \in J} \| S_n(t,y_0)- S_n(t,y) \|\bigg) \| P_n v(y)\|_{\XSet} \\
      & \leq e^{T \| A_n(y_0) \|} \, \| P_n \| \, \|v(y_0) - v(y) \|_{\XSet} \\
      & \qquad + T \| A_n(y_0) - A_n(y)\| \,  \| P_n v(y)\|_{\XSet} q_{n}(T,y_0,y) ,
  \end{align*}
  with 
  \[
  q_n(T,y_0,y) = \exp\big[ T \| A_n(y_0) - A_n(y) \| + T \min(\| A_n(y_0) \|,\| A_n(y)\|) \big] 
  \]
  To show $A_n \in C^0(\Gamma,BL(\XSet_n)$ we note that $W(y) =
  H(w(\blank,\blank,y))$, where $w(\blank,\blank,y) \in \WSet$ is the prescribed
  synaptic kernel and
  \[
    H \colon \WSet \to K(\XSet) \subset BL(\XSet), \qquad H(k)(v) = \int_{D}k(\blank,x')v(x') \,dx',
  \]
  in which $K(\XSet)$ is the space of compact operators on $\XSet$ to itself. Since
  $w \in C^0_{\sigma}(\Gamma,\WSet)$ then $w
  \in C^0(\Gamma_w,\WSet)$. Further, the operator $H$ is continuous
  by~\cite[Proposition 3.1]{avitabile2024NeuralFields}. Therefore, the composition
  $W(y) = H(w(\blank,\blank,y))$ satisfies $W \in C^0(\Gamma_w,H(\WSet))$.
  This implies $A_n \in C^0(\Gamma_w,BL(\XSet_n))$, because $\| A_n(y_0) - A_n(y) \|
  \leq  \| P_n\| \| W(y_0) - W(y) \| \to 0$ as $y_0 \to y$. We use the continuity of $A_n$, and the hypothesis
  $v \in C^0(\Gamma_v,\XSet)$ to conclude $(I_1) \to 0$ as $y_0 \to y$.
  Further
  \begin{align*}
    (I_2) & \leq
       \sup_{t \in J} \int_{0}^{T} \| S_n(t-s,y_0) \| \| P_n g(s,y_0) - P_n g(s,y)
       \|_{\XSet}\,d s \\
          & \qquad  
        + \sup_{t \in J} \int_{0}^{T} \| S_n(t-s,y_0) - S_n(t-s,y) \| \| P_n g(s,y)
        \|_{\XSet}\,d s \\
          & \leq 
          T e^{ T \| A_n(y_0) \| } \, \| P_n \| \, \| g(\blank,y_0) -g(\blank,y) \|_{C^0(J,\XSet)} \\
          & \qquad  
          + T^2 \| A_n(y_0) - A_n(y) \| 
          % \exp{\big( T \| A_n(y_0) - A_n(y)\| + T \| A_n(y_0) \|\big) } 
        % e^{T \| A_n(y_0) - A_n(y)\| + T \| A_n(y_0) \|} \,
          \| P_n \| \, \| g(\blank,y) \|_{C^0(J,\XSet)}
        q_n(T,y_0,y), 
  \end{align*}
  hence the hypothesis $g \in C^0_\sigma(\Gamma,C^0(J,\XSet)) \subset C^0(\Gamma,C^0(J,\XSet))$ and the continuity of $A_n$
  give $(I_2) \to 0$ as $y_0 \to y$. 

  So far we have proved that $(I) \to 0$ as $y_0 \to y$, that is, $\psi_n \in
  C^0(\Gamma,C^0(J,\XSet_n))$, which in turn implies $(II) \to 0 $ as $y_0 \to y$,
  owing to
  \begin{align*}
    (II) \leq & \| A_n(y_0) \| \, \| \psi_n(y_0) - \psi_n(y) \|_{C^0(J,\XSet)} \\
              & + \| A_n(y_0) - A_n(y)\| \, \| \psi_n(y) \|_{C^0(J,\XSet)}.
  \end{align*}

  Finally, the hypothesis $g \in C_\sigma^0(J,C^0(J,\XSet))$ implies $(III) \to 0$ as
  $y_0
  \to y$, because
  \[
    (III) \leq \| P_n \| \| g(\blank,y_0) - g(\blank,y) \|_{C^0(J,\XSet)}.
  \]
  
  \textit{Step 2: boundedness of $\sigma(y) \| \psi_n(y) \|_{C^1(J,\XSet)}$.} To
  estimate the weighted norm of $\psi_n(y)$ we must estimate first 
  $\| \psi_n(y) \|_{C^1(J,\XSet)} = \| u_n(\blank,y) \|_{C^1(J,\XSet)}$. Recall that,
  to be precise, we should write $\| \psi_n(y) \|_{C^1(J,\XSet)} = \| \tilde
  u_n(\blank,y) \|_{C^1(J,\XSet)}$, to reflect the dependence on $y$. While we
  normally drop the tilde, we reinstate it in this proof because we use at the same
  time functions dependent on $y$ and on $\omega$, for which a different symbol is needed.
  % Following~\cite{avitabile2024NeuralFields} we introduce the quantities \da[]{here
  % $\kappa_\alpha$ are not needed, but only their tilde variants}
  % \[
  % \begin{aligned}
  %   & \kappa_{w,n}(\omega_w) :=
  %   \| P_n W(\omega_w) \|_{BL(\XSet,\XnSet)}, 
  %   && \kappa_{w}(\omega_w) :=
  %   \| w(\omega_w) \|_{\WSet}, 
  %   \\
  %   & \kappa_{g,n}(\omega_g) :=
  %   \| P_n g(\blank,\omega_g) \|_{C^0(J,\XSet)}, 
  %   && \kappa_{g}(\omega_g) :=
  %   \| g(\blank,\omega_g) \|_{C^0(J,\XSet)}, 
  %   \\
  %   & \kappa_{v,n}(\omega_v) :=
  %   \| P_n v(\omega_v) \|_{\XSet},
  %   && \kappa_{v}(\omega_v) :=
  %   \| v(\omega_v) \|_{\XSet}.
  % \end{aligned}
  % \]
  By the finite-dimensional noise assumption, \cref{rem:finDiffNoisePn} and~\cite[Theorem
  6.3, see also Theorem 4.2]{avitabile2024NeuralFields} it holds
  \begin{equation}\label{eq:psinBound}
    % \| \psi_n(Y(\omega)) \|_{C^r(J,\XSet)} 
      \| \tilde u_n(\blank,Y(\omega)) \|_{C^r(J,\XSet)} 
      = \| u_n(t,\omega) \|_{C^r(J,\XSet)} \leq M_{r,n}(\omega)
      =: \tilde M_{r,n}(Y(\omega)),
      \qquad r \in \{ 0,1 \}
  \end{equation}
  with
  % As a preliminary step for bounding $\sigma(y) \| \psi_n(y) \|_{C^1(J,\XSet)}$, we
  % bound the constants $M_{0,n}, M_{1,n}$ appearing in \cref{thm:existenceNF_Pn}. By
  % setting $\kappa_{\alpha,n}(\omega_\alpha) = \tilde \kappa_{\alpha,n}(Y_\alpha(
  % \omega_\alpha))$ in the expressions for $M_{0,n}, M_{1,n}$ in
  % we obtain new $y$-dependent functions satisfying $M_{0,n}(\omega)
  % =\tilde M_{0,n}(Y(\omega))$ and $M_{1,n}(\omega)=\tilde M_{1,n}(Y(\omega))$,
  % respectively given by \da[]{Decide on exponentials in paper 1}
 \[
   \begin{aligned}
     & \tilde M_{0,n}(y) = 
     \big( \tilde \kappa_{v,n}(y_v) + \tilde \kappa_{g,n}(y_g) T \big) \exp( \kappa_{w,n}(y_w)T), \\
     & \tilde M_{1,n}(y) = \tilde \kappa_{g,n} (y_g) + (2 + \tilde\kappa_{w,n}(y_w)) \tilde M_{0,n}(y).
   \end{aligned}
 \]
 Using~\cite[Proposition 6.1]{avitabile2024NeuralFields} and the hypotheses $v \in
 C^0_\sigma(\Gamma,\XSet)$, $g \in C^0_\sigma(\Gamma,C^0(J,\XSet))$, and $w \in BC(\Gamma, \WSet))$, we
 obtain the bounds
 \begin{equation}\label{eq:MTildeBounds}
   \begin{aligned}
     \| \tilde M_{0,n} \|_{C^0_\sigma(\Gamma,\RSet)}  
       & \leq \| P_n \| \exp\Big( T\| P_n \| \, \| w \|_{BC(\Gamma,\WSet)}\Big)
           \Big( \| v \|_{C^0_\sigma(\Gamma,\XSet)} 
           + T \| g \|_{C^0_\sigma(\Gamma,C^0(J,\XSet))} \Big) < \infty   \\
     \| \tilde M_{1,n} \|_{C^0_\sigma(\Gamma,\RSet)}  
       & \leq \| P_n \| \, \| g \|_{C^0_\sigma(\Gamma,C^0(J,\XSet))} +  \Big(2 + \| P_n \| \, 
         \| w \|_{BC(\Gamma,\WSet)} \Big)\| \tilde M_{0,n} \|_{C^0_\sigma(\Gamma,\RSet)}  < \infty 
    \end{aligned}
 \end{equation}

 Combining \cref{eq:psinBound,eq:MTildeBounds} we now estimate
 \[
   \begin{aligned}
   \| \psi_n \|_{C^0_\sigma(\Gamma,C^1(J,\XSet))} 
     & = \sup_{y \in \Gamma} \sigma(y) \| \tilde u_n(y) \|_{C^1(J,\XSet)} \\
     & \leq \sup_{y \in \Gamma} \sigma(y) \tilde M_{1,n} (y) = \| \tilde M_{1,n} \|_{C^0(\Gamma,C^1(J,\XSet))}
     < \infty. 
   \end{aligned}
 \]
\end{proof}

\subsection{Analyticity hypotheses for the projected random
data}\label{ssec:AnalyticytyHypotheses} 
We can now work towards establishing the analyticity of $u_n$ with respect to each
variable $y_i$: we aim to find an analytic extension of $u_n$, as a function of
$y_i$, onto some 
open set of the complex plane containing $\Gamma_i$, namely
\begin{equation}\label{eq:SigmaDef}
  \Sigma(\Gamma_i,\tau_i)=\{z\in\mathbb{C}: \dist(\Gamma_i,z)< \tau_i\}, \qquad \tau_i > 0,
\end{equation}
and, by definition, the extension will coincide with $u_n$ on $\Gamma_i$. 

\begin{remark}[Analyticity radius]
The radius $\tau_i$ is important in the error estimates that follows, as it determines 
the decay rate of the stochastic collocation term. We aim to control this term with a constant 
independent of the spatial projection (namely independent of $n$).
However, the nature of our problem significantly differs
from the ones studied in \cite{babuskaStochasticCollocationMethod2007,Zhang.2012}: a
priori we can not rely on  properties of parabolic or elliptic differential
operators. The radius of analyticity we find shows a dependence on time that seems
natural to expect in the linear (and nonlinear) neural field problem. 
\end{remark}

We recall that $\Gamma=\prod_{j=1}^m \Gamma_j$ where
$m=\sum_{\alpha\in\mathbb{U}}m_{\alpha}$, therefore each vector $y_\alpha\in \Gamma_\alpha$
has exactly $m_\alpha$ components. We prove analyticity of $u_n$ with respect to each variable $y_i \in \Gamma_i$, similarly to what was done in \cite[Section 4.1]{Zhang.2012}, of which we 
adopt the notation
\[
  \Gamma_i^*=\prod_{j=1,j\neq i}^m \Gamma_j,
  \qquad
  \sigma_i^*=\prod_{j=1,j\neq i}^m \sigma_j,
\]
where elements of $ \Gamma_i^*$ are denoted by $y_i^*$. Moreover, we denote
$C^0_{\sigma^*}( \Gamma_i^*, \BSet)$ the function space defined analogously to
$C^0_\sigma(\Gamma, \BSet)$ in \cref{eq:WeightSpace}, with weights
$\sigma_i^*(y_i^*)=\prod_{j=1,j\neq i}^{j=m}\sigma_j(y_j)$.

With a slight abuse of notation we rewrite the semidiscrete solution $u_n(\blank,y)$,
$y \in \Gamma$, as $u_n(\blank,y_i,y_i^*)$, with $(y_i,y_i^*) \in \Gamma_i \times
\Gamma^*_i$ for some $i \in \NSet_m$. 
For each $y_i\in\Gamma_i\subset \mathbb{R}$ we prove that the semidiscrete
solution $u_n(t,y_i,y_i^*)$ seen as a function of $y_i$, namely $u_n: \Gamma_i\to
C_{\sigma^*_i}^0(\Gamma_i^*,C^1(J,\XSet))$, admits an analytic extension in
$\Sigma(\Gamma_i,\tau_i)$, for some $\tau_i>0$.
In order to prove analyticity of $u_n$ one must require sufficient regularity of the
random data as detailed in the next assumption.

\begin{hypothesis}[Analyticity of the projected random data] 
  \label{hyp:analyticityLRNF_Pn} 
  The random data satisfies $w \in BC(\Gamma,\WSet)$, $g \in C^0_\sigma(\Gamma,C^0(J,\XSet)))$, and
  $v \in C^0_\sigma(\Gamma,\XSet)$. Further, there exist a positive constant $\bar
  K_w$ and functions $K_g, K_v \in C^0_\sigma(\Gamma,\RSet_{\geq 0})$ such that
  for any $i \in  \NSet_m$ there exists $\gamma_i > 0$ satisfying
  \[
    \begin{aligned}
      & \|  P_n \partial_{y_i}^k w(\blank, \blank, y) \|_{\WSet} \leq 
            \bar K_w \, k!\, \gamma_i^k, 
      && (y,k,n) \in \Gamma \times \ZSet_{\geq 0} \times \NSet, \\
      & \|  P_n \partial_{y_i}^k g(\blank, y) \|_{C^0(J,\XSet)} 
             \leq K_g(y)\, k!\, \gamma_i^k,  
      && (y,k,n) \in \Gamma \times \ZSet_{\geq 0} \times \NSet, \\
      & \|  P_n \partial_{y_i}^k v(y) \|_{\XSet} 
             \leq K_v(y)\, k!\, \gamma_i^k,  
      && (y,k,n) \in \Gamma \times \ZSet_{\geq 0} \times \NSet.
    \end{aligned}
  \]
\end{hypothesis}

\begin{remark}[Checking \cref{hyp:analyticityLRNF_Pn}]
\Cref{hyp:analyticityLRNF_Pn} requires that $\| P_n w(\blank , \blank ,y) \|_{\WSet}$
is bounded by $\bar K_w$ homogeneously in the spatial discretisation parameter $n$
(case $k =0$), and that similar bounds hold for $P_n g(\blank ,y)$ and $P_n v(y) $ in
the respective norms. The independence on $n$ guarantees analyticity radii that are
also discretisation independent. Because the bounds must be independent on $n$,
\cref{prop:kappaEst_Pn} is not useful to check \cref{hyp:analyticityLRNF_Pn}.
Further, the hypothesis requires the existence of partial derivatives of the
projected input data with respect to $y_i$, and that \(P_n
\partial^k_{y_i}g(\blank,y) \in C^0(J,\XSet)\), \(\partial^k_{y_i}w(\blank,\blank,y)
\in \WSet\), and \(P_n \partial^k_{y_i}v(y) \in \XSet\) with bounds independent on
$n$. \Cref{hyp:analyticityLRNF_Pn} is different from the analyticity hypotheses
in existing literature on
PDEs~\cite{babuskaStochasticCollocationMethod2007,Zhang.2012}, which concern the
unprojected random data (hence with left-hand sides that are independent on $n$), and
provide explicit choices for the bounding constants. We refer to
\cref{sec:introduction} for a justification of this choice.
% This apparent complication is important in the
% context of neural fields. The cited literature on PDEs presents spatial
% discretisations for which projectors are orthogonal and satisfy $\| P_n \| = 1$;
% for neural fields it is possible to treat at once cases in which $\|  P_n \| = 1$ for
% all $n$, $\{ \| P_n \| \}_n$ is bounded, and $ \{ \| P_n \| \}_n$ is unbounded, which
% correspond to different schemes (see examples in
% \cite{avitabileProjectionMethodsNeural2023} and references therein). The latter,
% occurs in spectrally convergent schemes and it is thus an important discretisation
% type on regular meshes. 
We now discuss how to check \cref{hyp:analyticityLRNF_Pn}: the upcoming
\cref{lem:ProjAnalyticity} demonstrates that if the family of projectors satisfies
\(P_n z \to z\) for every \(z \in \XSet\), then \cref{hyp:analyticityLRNF_Pn},
on the projected random data, can be replaced by the more straightforward
\cref{hyp:analyticityLRNF} on the unprojected data, as in the PDE case. In
\cref{sssec:checkAnaliticityOfData} we give working examples on how to check
\cref{hyp:analyticityLRNF_Pn} in cases where $P_n z \not\to z$ for some $z \in \XSet$.
\end{remark}

\begin{hypothesis}[Analyticity of random data] \label{hyp:analyticityLRNF}  
  The random data satisfies $w \in BC(\Gamma,\WSet)$, $g \in
  C^0_\sigma(\Gamma,C^0(J,\XSet)))$, and $v \in C^0_\sigma(\Gamma,\XSet)$. Further,
  for any $i \in \NSet_m$ there exists a positive constant $\gamma_i > 0$ such that 
  \[
  \begin{aligned}
    % & w \in BC(\Gamma,\WSet),
    &
     \frac{ \|\partial^k_{y_i}w(\cdot,\cdot,y)\|_{\WSet} }{1+\|w(\cdot,\cdot,y) \|_{\WSet}} 
     \leq k! \gamma_i^k, 
    && (y,k) \in \Gamma \times \NSet, \\
    % & g\in C_\sigma^0 (\Gamma, C^0 (J,\XSet)),
    &
     \frac{\|\partial^k_{y_i}g(\blank,y)\|_{C^0(J,\XSet)}}{ 1+\|g(\blank,y) \|_{C^0(J,\XSet)}} 
     \leq k!\, \gamma_i^k, 
    && (y,k) \in \Gamma \times \NSet, \\
    \label{eq:analyticityV}
    % & v\in C_\sigma^0 (\Gamma, \XSet),
    &
     \frac{\| \partial^k_{y_i}v(y)\|_{\XSet}}{ 1+\|v(y) \|_{\XSet}} \leq k!\, \gamma_i^k. 
    && (y,k) \in \Gamma \times \NSet.
    % && \text{for all $(y,k) \in \Gamma \times \NSet$}
  \end{aligned}
\]
\end{hypothesis}

\begin{lemma}\label{lem:ProjAnalyticity} 
  Assume \cref{hyp:analyticityLRNF}. If $P_n z \to z $ in $\XSet$ as $n \to \infty $
  for all $z \in \XSet$, then \cref{hyp:analyticityLRNF_Pn} holds.
\end{lemma}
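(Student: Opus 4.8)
The plan is to leverage the single extra ingredient in the lemma — the pointwise convergence $P_n z \to z$ — through the Banach--Steinhaus theorem. Since the phase space $\XSet \in \{C(D), L^2(D)\}$ is Banach and each $P_n$ lies in $BL(\XSet)$, pointwise convergence of $\{P_n z\}_n$ (hence pointwise boundedness) gives, by the uniform boundedness principle, a finite constant $C := \sup_{n \in \NSet} \|P_n\| \ge 1$. With this at hand, \cref{hyp:analyticityLRNF_Pn} follows by transporting the estimates of \cref{hyp:analyticityLRNF} through the uniformly bounded projectors.

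First I would observe that the function-space requirements of \cref{hyp:analyticityLRNF_Pn} ($w \in BC(\Gamma,\WSet)$, $g \in C^0_\sigma(\Gamma,C^0(J,\XSet))$, $v \in C^0_\sigma(\Gamma,\XSet)$) are identical to those already assumed in \cref{hyp:analyticityLRNF}, so there is nothing to prove there. Then, fixing $i \in \NSet_m$, I would take the same $\gamma_i > 0$ provided by \cref{hyp:analyticityLRNF}. Because $P_n$ is a bounded linear operator not depending on $y$ (nor on $t$), the derivatives $\partial^k_{y_i}(P_n v)(y)$ exist and equal $P_n\partial^k_{y_i}v(y)$, and similarly for $g$ and — acting in its first spatial slot, equivalently through $W(y) = H(w(\blank,\blank,y))$ with norm still bounded by $\|P_n\|$ — for $w$. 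Applying $\|P_n\| \le C$ together with the $1+\|\blank\|$ bounds of \cref{hyp:analyticityLRNF} then yields, for all $(y,k,n)$,
\[
  \|P_n\partial^k_{y_i}v(y)\|_{\XSet} \le C\,k!\,\gamma_i^k\bigl(1+\|v(y)\|_{\XSet}\bigr),
\]
together with the analogous bounds for $\|P_n\partial^k_{y_i}g(\blank,y)\|_{C^0(J,\XSet)}$ and $\|P_n\partial^k_{y_i}w(\blank,\blank,y)\|_{\WSet}$; the case $k=0$ (which is in the range of \cref{hyp:analyticityLRNF_Pn} but not of \cref{hyp:analyticityLRNF}) is harmless since $\|z\| \le 1 + \|z\|$.

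It then remains to package the right-hand sides into objects of the required type. For the kernel, $w \in BC(\Gamma,\WSet)$ makes $1 + \|w(\blank,\blank,y)\|_{\WSet} \le 1 + \|w\|_{BC(\Gamma,\WSet)}$ uniform in $y$, so $\bar K_w := C\bigl(1+\|w\|_{BC(\Gamma,\WSet)}\bigr)$ is a genuine constant. For $g$ and $v$ I would set $K_g(y) := C\bigl(1+\|g(\blank,y)\|_{C^0(J,\XSet)}\bigr)$ and $K_v(y) := C\bigl(1+\|v(y)\|_{\XSet}\bigr)$; these are $\RSet_{\ge 0}$-valued and continuous (the norm of a continuous Banach-valued map is continuous). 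To check $K_g, K_v \in C^0_\sigma(\Gamma,\RSet_{\ge 0})$, I would use that every weight satisfies $0 < \sigma_i(y_i) \le 1$, whence $\sigma(y) \le 1$ and $\sigma(y)K_v(y) \le C + C\,\sigma(y)\|v(y)\|_{\XSet} \le C + C\|v\|_{C^0_\sigma(\Gamma,\XSet)} < \infty$, and likewise for $K_g$. Collecting the three estimates establishes \cref{hyp:analyticityLRNF_Pn}.

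I expect the one genuinely delicate point to be the synaptic-kernel term: one must be explicit about how $P_n$, defined a priori only on $\XSet$, is made to act on the kernel space $\WSet$ — either in the first variable, or through the representation $W(y) = H(w(\blank,\blank,y))$ and the continuity of $H$ from \cite[Proposition 3.1]{avitabile2024NeuralFields} — and verify that the resulting operator still has norm at most $\|P_n\|$, so that the Banach--Steinhaus bound $C$ applies uniformly. Everything else is a routine propagation of constants.
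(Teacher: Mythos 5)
Your proposal is correct and follows essentially the same route as the paper's proof: Banach--Steinhaus gives $\sup_n \|P_n\| < \infty$, and the bounds of \cref{hyp:analyticityLRNF} are then transported through the projectors with $K_v(y) = C(1+\|v(y)\|_{\XSet})$ (and analogues for $g$ and $w$), whose membership in $C^0_\sigma$ follows from the corresponding assumption on the data. Your extra care about how $P_n$ acts on the kernel space $\WSet$ is a point the paper glosses over with ``a suitable norm change,'' but it does not change the argument.
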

\begin{proof}
  If $P_n z \to z$ for all $z$, then by the Principle of Uniform
  Boundendess~\cite[Theorem 2.4.4]{atkinson2005theoretical} it holds 
$M := \sup_{n \in \NSet} \|  P_n \| < \infty$. We fix $i \in \NSet_m$, derive
\[
  \|  P_n v(y) \|_{\XSet} \leq \|  P_n \| \| v(y) \|_{\XSet}
  \leq M \| v(y) \|_{\XSet} \leq M (1 + \| v(y) \|_{\XSet})
  \qquad 
  \textrm{(bound for $k = 0$),}
\]
and use
\cref{eq:analyticityV} to estimate, for any $(y,k,n) \in \Gamma \times \NSet \times
\NSet$ 
\[
  \| P_n \partial^k_{y_i}v(y)\|_{\XSet} \leq 
  \| P_n \| \, \| \partial^k_{y_i}v(y)\|_{\XSet} \leq 
  M  k! \, \gamma_i^k (1+\|v(y) \|_{\XSet})
  \qquad 
  \textrm{(bound for $k > 0$).}
\]
Since $v \in C^0_\sigma(\Gamma,\XSet)$ then, $y \mapsto M(1 + \| v(y) \|_{\XSet}$ is
in $C^0_\sigma(\Gamma)$, hence the bound on $v$ in \cref{hyp:analyticityLRNF_Pn} holds
with $K_v(y) = M(1 + \| v(y) \|_{\XSet})$. In a similar way we find bounds for
the derivatives of $w$ and $g$, with a suitable norm change.
\end{proof}

\subsubsection{Checking analyticity hypothesis on the projected random
data}\label{sssec:checkAnaliticityOfData} We give examples on checking 
\cref{hyp:analyticityLRNF_Pn,hyp:analyticityLRNF} for two types of random data,
previously introduced in \cref{ex:affineRandomData,ex:nonAffineRandomData}

% We give examples of checking hypotheses for two types of random data: firstly, a
% truncated Karhunen-Loeve expansion of random initial condition in the form
% \begin{equation}\label{eq:vAnaExample}
%   v(x,\omega_v) = b_0(x) + \sum_{i \in \NSet_m}  b_i(x) Y_i(\omega_v),
% \quad Y_i \stackrel{i.i.d}{\sim}\mathcal{N}(0,1), 
% \quad i \in \NSet_m, 
% \quad x \in D = [-1,1] ,
% \end{equation}
% which is $m$-dimensional noise with $(y_1,\ldots,y_m) \in \Gamma = \RSet^m$, and in
% which the spatial regularity of the functions depends on $\{b_q\}_{q \in \ZSet_m}
% \subset \XSet$. In this case the mapping $y \mapsto v(\blank, y)$ is affine 
% (recall $v(x,\omega_v) = \tilde v(x,Y(\omega_v))$, and that we omit the tilde for
% simplicity).

% An example of mapping nonlinear in the parameters is given by the following random forcing
% \begin{equation}\label{eq:gAnaExample}
%   g(x,t,\omega_g) = e^{t Y(\omega_g)} \bigl[ (Y(\omega_g)+1) \sin(4\pi x) + x/(2 \pi)\bigr]
%   % \quad  Y \sim \mathcal{U}[\alpha,\beta],
%   \quad  x \in D = [-1,1],
% \end{equation}
% with $Y \sim \mathcal{U}[\alpha,\beta]$ or $Y \sim \mathcal{N}(0,\sigma)$. This
% forcing
% is used in the numerical example in \cref{subsec:NumericalEx1} which is $1$-dimensional noise with $y
% \in \Gamma = [\alpha,\beta] \subset \RSet$ or $\Gamma = \RSet$, respectively.

\begin{example}[Checking the analyticity of \cref{eq:vAnaExample} under a
  Finite-Element collocation scheme] \label{ex:FEMColl}
  Assume we want to check \cref{hyp:analyticityLRNF_Pn} for
  \cref{eq:vAnaExample} when the spatial discretisation is done with the finite
  element collocation method in \cite[Section
  4.1.1]{avitabileProjectionMethodsNeural2023}, for which $\XSet = \bigl( C(D), \|
  \blank \|_{\infty}\bigr)$, and $P_n$ is an interpolatory projector at nodes $\{ x_j
\}_{j \in \ZSet_{n}}$, with $\XSet_n = \spn \{ \ell_0, \cdots, \ell_n\}$, where $\{
\ell_j \}$ are the classical tent (piecewise linear) functions. In this case
$P_n z \to z$ as $n \to \infty $ for all $z \in \XSet$ (see \cite[Equation
3.2.39]{atkinson1997} and \cite[Section 3.2.3]{atkinson2005theoretical}), therefore
we can check \cref{hyp:analyticityLRNF} and apply \cref{lem:ProjAnalyticity}. It
holds 
\[
  \begin{aligned}
  & \| v(y) \|_{\XSet} \leq 
    \| b_0 \|_{\XSet} + \sum_{i=1}^{m}  \, |y_i|\,  \| b_i \|_{\XSet} 
    < \infty, \\
  & \| v \|_{C^0_\sigma(\Gamma,\XSet)} \leq 
    \sup_{y \in \Gamma} 
    \biggl[
    \exp\biggl(-\sum_{i =1}^m \sigma_i |y_i|\biggr)
    \biggl(
    \| b_0 \|_{\XSet} + \sum_{i=1}^{m}  \, |y_i|\,  \| b_i \|_{\XSet} 
    \biggr)
    \biggr]
    < \infty,
  \end{aligned}
\]
and $y \mapsto v(y)$ is continuous on $\Gamma$ to $\XSet$, hence $v \in
C^0_\sigma(\Gamma,\XSet)$. Further, we compute
\[
  \partial_{y_i}^{k} v(x,y)
  =
  \begin{cases}
    % b_0(x) + \sum_{i \in \NSet_m} \sqrt{\lambda_i} b_i(x) y_i, 
        % & (k,i) \in \{ 0 \} \times \NSet_m, \\
     b_i(x) & k = 1, \\
                        0   & k > 1,
  \end{cases}
  \qquad 
  \frac{\|  \partial_{y_i}^{k} v(y) \|_{\XSet}}{1 + \| v(y) \|_{\XSet}}
  \leq 
  \begin{cases}
    \| b_i\|_{\XSet}  & k = 1, \\
                        0   & k > 1,
  \end{cases}
  \qquad 
  (y,i) \in  \Gamma \times \NSet_m
\]
hence \cref{hyp:analyticityLRNF_Pn} holds with $\gamma_i = \| b_i \|_{\XSet}$ for all
$i \in \NSet_m$.
\end{example}
\begin{example}[Checking analyticity of \cref{eq:vAnaExample} with Finite-Element Galerkin
  scheme] We now wish to modify \cref{ex:FEMColl} in the context of a Finite-Element
  Galerkin scheme. In this case $\XSet = \bigl( L^2(-1,1), \| \blank \|_{L^2(-1,1)}\bigr)$ and $P_n$
  is an orthogonal projector on $\XSet$ to $\XSet_n = \spn \{ \ell_0, \ldots,\ell_n
  \}$, where $\{ \ell_j \}$ are defined as in \cref{ex:FEMColl}, but are considered
  as elements in $L^2(-1,1)$ (see \cite[Section 4.2.1, and references
  therein]{avitabileProjectionMethodsNeural2023}). In this case 
   we have
  $P_n z \to z$ for all $z \in \XSet$ (owing to $\| P_n \|=1$) and \cref{hyp:analyticityLRNF}
  is checked as in \cref{ex:FEMColl}, with $\| \blank \|_\XSet = \| \blank
  \|_{L^2(-1,1)}$.
\end{example}
\begin{example}[Checking analyticity of \cref{eq:vAnaExample} with Spectral Collocation
  method]\label{ex:ChebColl}
  Assume we are back to the functional setup of \cref{ex:FEMColl}, hence
  $\XSet = C([-1,1])$, but we use a Lagrange interpolating polynomial with Chebyshev
  node distribution (Chebyshev interpolant), which is spectrally convergent for
  neural fields with sufficiently regular data \cite[Section 4.1.2]{avitabileProjectionMethodsNeural2023}. For this
  scheme $\| P_n \| \in O(\log n)$ and hence there exists $z \in  \XSet$ for which
  $P_n z$ diverges as $n \to \infty $. In deterministic problems this difficulty is
  overcome by noting that convergence of $P_n z$ is restored for all functions with
  sufficiently strong regularity (typically H\"older continuity, see for instance
  \cite{atkinson1997,atkinson2005theoretical}). In this example, we proceed to verify
  \cref{hyp:analyticityLRNF_Pn} directly by using a similar idea.  
  We outline a strategy that is valid for generic
  functions $v(y)$ before using it for the specific choice \cref{eq:vAnaExample},
  with $\{b_j\}_j \in C^r(D)$. The
  main idea is to prove that \cref{hyp:analyticityLRNF_Pn} holds provided
  \cref{hyp:analyticityLRNF} holds for $v(x,y)$ as well as for its $r$th partial
  derivative with respect to $x$, $\partial_{x}^{r} v(x,y)$, for some $r \in \NSet$. More precisely we
  require:
  \begin{remunerate}
    \item[H1]
      \Cref{hyp:analyticityLRNF} holds with constants $\gamma_i = \mu_{0,i}$, for all
      $i \in \NSet_m$.
    \item[H2] It holds $v \in C^k_\sigma(\Gamma,C^r(D))$ for some $r \in \NSet$ and
      all $k \in \ZSet_{\geq 0}$.
      % The partial derivative $\partial_{y_i}^{k} \partial_{x}^{r} v(x,y)$ exists
      % for some $r \in \NSet$ and for all $(x,k) \in D \times \ZSet_{\geq 0}$. 
    \item[H3] For any $i \in  \NSet_m$ there exists a positive constant $\mu_{r,i}$ such
      that
      \[
       \frac{\| \partial_{y_i}^{k}\partial_{x}^{r} v(y) \|_{\XSet}}%
            { 1 + \| \partial_{x}^{r} v(y) \|_{\XSet},} 
       \leq k! \, \mu_{r,i}^k,
       \qquad 
       (y,k) \in \Gamma \times \NSet.
      \]
  \end{remunerate}
  Applying Jackson's theorem \cite[Theorem
  3.7.2]{atkinson2005theoretical} to the function $x \mapsto \partial_{y_i}^{k}
  v(x,y)$ we estimate\footnote{The referenced Jackson's theorem is given
    for a function $f$ whose $r$th derivative is $\alpha$-H\"older continuous, $f \in
    C^{r,\alpha}([-1,1])$ for some $\alpha \in (0,1]$. Here we fixed $y$, set $f(x) =
    \partial_{y_i}^{k}v(x,y)$, assumed $f \in C^r([-1,1])$, and applied the
    theorem with $C^{r-1,1}([-1,1])$ and H\"older constant $M = \max_{x \in [-1,1]}
    |f^{(r)}(x)|$.} 
  \[
    \bigl\| P_n \partial_{y_i}^{k} v(\blank, y) 
          - \partial_{y_i}^{k} v(\blank, y)\bigr\|_{\XSet}
    \leq d_{r-1,n} c^r \frac{M_{k,r,i}(y)}{n^r} 
    \leq \delta_{r-1} c^r \frac{M_{k,r,i}(y)}{n^r} 
  \]
  in which
  \[
    d_{r-1,n} := \frac{n^r}{n(n-1)\ldots(n-r+1)},
    \qquad 
    c := 1 + \frac{\pi^2}{2},
    \qquad 
    M_{r,k,i} (y) := \| \partial_{y_i}^{k}\partial_{x}^{r} v(\blank, y) \|_{\XSet},
  \]
  and where we have used the fact that $d_{r-1,n} \to 1$ as $n \to \infty $, hence
  $\delta_{r-1} := \sup_{n \in \NSet} d_{r-1,n}$ is well defined. We now conclude that:
  \[
    \bigl\| P_n \partial_{y_i}^{k} v(\blank, y) 
          - \partial_{y_i}^{k} v(\blank, y)\bigr\|_{\XSet}
    \to 0
    \qquad 
    \textrm{as $n \to \infty$}
    \qquad 
    \textrm{for all $(y,k,i) \in \Gamma \times  \ZSet_{\geq 0} \times \NSet_m$},
  \]
  and
  \[
    \bigl\| P_n \partial_{y_i}^{k} v(\blank, y) 
          - \partial_{y_i}^{k} v(\blank, y)\bigr\|_{\XSet}
    \leq \delta_{r-1} c^r M_{k,r,i}(y)
    \qquad 
    \textrm{for all $(y,k,i,n) \in \Gamma \times  \ZSet_{\geq 0} \times \NSet_m
    \times \NSet$}.
  \]
  In passing, we note that the latter bound is homogeneous in $n$, as desired. We now
  use H1 and H3 to estimate
  \[
    \begin{aligned}
    \bigl\| P_n \partial_{y_i}^{k} v(\blank, y) \bigl\|_{\XSet}
    &
    \leq 
    \bigl\| P_n \partial_{y_i}^{k} v(\blank, y) 
          - \partial_{y_i}^{k} v(\blank, y)\bigr\|_{\XSet}
    + \bigl\| \partial_{y_i}^{k} v(\blank, y)\bigr\|_{\XSet} \\
    &
    \leq 
    \delta_{r-1} c^r 
    \bigl\| \partial_{y_i}^{k} \partial_{x}^{r} v(\blank, y)\bigr\|_{\XSet} 
    + \bigl\| \partial_{y_i}^{k} v(\blank, y)\bigr\|_{\XSet} \\
    &
    \leq 
    \delta_{r-1} c^r 
    \bigl(1 + \| \partial_{x}^{r} v(y) \|_{\XSet} \bigr) k!\, \mu_{r,i}^k
    +
    \bigl(1 + \| v(y) \|_{\XSet} \bigr) k! \, \mu_{0,i}^k \\
    &
    \leq 
    \bigl[
    \delta_{r-1} c^r 
    \bigl(1 + \| \partial_{x}^{r} v(y) \|_{\XSet} \bigr)
    +
    \bigl(1 + \| v(y) \|_{\XSet} \bigr)
    \bigr]
    k! \, \bigl[ \max(\mu_{r,i}, \mu_{0,i}) \bigr]^k \\
    &
    =: K_v(y) \, k! \, \gamma_i^k,
    \qquad (y,k,n) \in \Gamma \times \ZSet_{\geq 0} \times \NSet.
    \end{aligned}
  \]
  Further $K_v \in C^0(\Gamma,\RSet_{\geq 0})$ because $ \partial_{x}^{r} v, v
  \in C^0_\sigma(\Gamma,\XSet)$ by H2, hence \cref{hyp:analyticityLRNF_Pn} holds. It
  now remains to show that $v$ satisfies H1--H3. In fact, H1 has been verified in
  \cref{ex:FEMColl}. We further compute
\[
  \partial_{y_i}^{k} \partial_{x}^{r} v(x,y)
  =
  \begin{cases}
    \displaystyle{\frac{d^r b_i}{dx^r}(x)} & k = 1, \\
                        0   & k > 1,
  \end{cases}
  \qquad 
  \frac{\|  \partial_{y_i}^{k} \partial_x^r v(y) \|_{\XSet}}%
       {1 + \| \partial_x^r v(y) \|_{\XSet}}
  \leq 
  \begin{cases}
    \displaystyle{
    \biggl\| \frac{d^r b_i}{dx^r} \biggr\|_{\XSet}  
    }
        & k = 1, \\
    0   & k > 1,
  \end{cases}
  \quad 
  (y,i) \in  \Gamma \times \NSet_m.
\]
Recalling that $\| b \|_{C^r(D)} := \| b \|_{C(D)}+\sum_{j=1}^r \| b^{(j)} \|_{C^j(D)}$, 
we observe that $\| b^{(r)} \|_{\XSet} \leq  \| b \|_{C(D)}$ for all $r$.
% $||f||_{C^r(D)}:= ||f||_{C(D)}+\sum_{j=1}^r ||f^{(j)}||_{C^j(D)}$, we observe that 
% \[
% 	  \biggl\| \frac{d^r b_i}{dx^r} \biggr\|_{\XSet}  \leq || b_i \|_{C^r(D)}
% \]
% holds for every $r$. 
Hence H2 holds, and H3 holds with $\mu_{r,i} = \| b_i \|_{C^r(D)}$.
\end{example}

\begin{example}[Checking analyticity of \cref{eq:gAnaExample}]\label{ex:gAnaExample}
  Finally, we discuss how to check analyticity for the non-affine case
  \cref{eq:gAnaExample}. In this case
  $\XSet=C(D)$ with $D = [-1,1]$, and we note that the mapping of
  interest, $y \mapsto g(\blank, \blank, y)$, is on $\Gamma \subseteq
  \RSet$ to $C^0(J,\XSet)$,
  and hence the relevant norms are on $C^0(J,\XSet)$ as opposed to $\XSet$ as in the
  previous examples. The strategy presented in \cref{ex:ChebColl} does not rely on the boundedness of
  $\Gamma$, so that it can be used also when $Y$ is a Gaussian random variable. We
  set $a = 4 \pi$, $b = (2 \pi)^{-1}$, compute
  \[
    g(x,t,y) = e^{ty} \bigl[ (y + 1) \sin(a x) + b x \bigr],
    \qquad 
    \partial_{y}^{k}g(x,t,y) = t^{k-1} e^{ty} \bigl[ (ty + t + k) \sin(a x) + b t x\bigr],
  \]
  and, using \cref{eq:weights}, we deduce $g \in C^0_{\sigma}(\Gamma,C^0(J,\XSet))$ for
  any $\eta$ (if $Y\sim \mathcal{U}[\alpha,\beta]$) or for any $\eta > T$ (if $Y \sim
  \mathcal{N}(0,\beta)$), where $\eta$ is given in \cref{eq:weights}. We let $G_k(y) = \| \partial_{y}^{k} g(\blank,y)\|_{C^0(J,\XSet)}$, for $k \in
  \ZSet_{\geq 0}$, compute 
  \[
    G_0(y) = e^{T y} A(y), \qquad A(y) = \max_{x \in [-1,1]} |(y+1) \sin(ax) + bx|,
  \]
  and estimate
  \[
    \begin{aligned}
    \frac{G_k(y)}{1+G_0(y)} 
    & \leq \frac{T^k e^{Ty} A(y) + T^{k-1} k}{1 + e^{Ty}A(y)}
      \leq T^k\frac{A(y)}{e^{-Ty} + A(y)} + T^{k-1}\frac{k}{e^{-Ty} + A(y)} \\
    & \leq T^k +T^{k-1}k \leq (1+T)^k (1+k).
    % & \leq T^k +T^{k-1}\frac{k}{1-b} \leq (1+T)^k \frac{1+k}{1-b}
    % \leq \Bigl(\frac{1+T}{1-b}\Bigr)^k (1+k).
    \end{aligned}
  \]
  In passing we note that we used the lower bound $e^{-Ty} +A(y) \geq 1$ because for $y
  \leq  0$ it holds $e^{-Ty}+ A(y) \geq e^{-Ty} \geq 1$, and for $y \geq 0$ it holds
  $e^{-Ty}+ A(y) \geq A(y) \geq 1-b$ by the reverse triangle inequality. Also, it was
  necessary to bound $T^k$ and $T^{k-1}$ by $(1+T)^k$ because $T$ can be smaller than
  $1$ and hence it is not true, in general, that $T^{k-1} < T^k$. We now aim to find a
  constant $\gamma$ satisfying
  \[
    (1+T)^k (1+k) \leq \gamma^k k! \qquad k \in \NSet,
  \]
  so that
  \[
     \frac{\|\partial^k_{y_i}g(\blank,y)\|_{C^0(J,\XSet)}}{ 1+\|g(\blank,y) \|_{C^0(J,\XSet)}} 
     =
     \frac{G_k(y)}{1+G_0(y)} \leq \gamma^k k! 
     \qquad (y,k) \in \Gamma \times \NSet,
  \]
  hence \cref{hyp:analyticityLRNF} holds with $\gamma = 2(1+T)$. 

  If $P_n$ is the one used in \cref{ex:FEMColl}, then $P_n z \to z $ for all $z \in
  \XSet$, and we use \cref{lem:ProjAnalyticity} to conclude that
  $\cref{hyp:analyticityLRNF_Pn}$ holds. If, on the other hand, $P_n$ is the one used
  in \cref{ex:ChebColl}, we proceed to check H1-H3 on $g$, with updated norms. In the
  previous step we have checked H1 holds with $\mu_{0} = 2(1+T)$. Further, we differentiate
  $g$ and $\partial_{y}^{k} g$ for $r \geq 2$ times with respect to $x$  and obtain
  \[
    \partial_{x}^{r} g(x,t,y) = a^r e^{ty}(y+1) \sin(r \pi/2 + ax), 
    \qquad 
    \partial_{y}^{k}\partial_{x}^{r} g(x,t,y) = a^r e^{ty}t^{k-1} (ty +t +k) \sin(r \pi/2 + ax)
  \]
  hence $v \in C^k_{\sigma}(\Gamma,C^r(D)))$ for any $k$, and H2 holds. We now set
  \[
    \begin{aligned}
      & G_k(y):= \| \partial_{y}^{k}\partial_{x}^{r} g(\blank,y) \|_{C^0(J,\XSet)} 
        = a^r T^{k-1} e^{Ty} |Ty + T + k| \\
      & G_0(y):= \| \partial_{x}^{r} g(\blank,y) \|_{C^0(J,\XSet)} = a^r e^{Ty} |y + 1|
    \end{aligned}
  \]
  and use the bound $a^{-r}e^{-Ty} + |y+1| \geq 1$ to estimate
  \[
    \frac{G_k(y)}{1+G_0(y)} 
      \leq T^k \frac{|y+1|}{a^{-r}e^{-Ty} + |y+1|} + T^{k-1} \frac{k}{a^{-r}e^{-Ty} + |y+1|} 
      \leq (1+T)^k(1+k).
  \]
  Any constant $\mu_r$ satisfying $(1+T)^k(1+k) \leq \mu_r^k k!$ for all $k \in \NSet$ is
  such that
  \[
    \frac{\| \partial_{y}^{k}\partial_{x}^{r} g(\blank,y) \|_{C^{0}(J,\XSet)}}{1+ 
                      \| \partial_{x}^{r} g(\blank,y) \|_{C^0(J,\XSet)}}
    \leq \mu_r^k k!
    \qquad (y,k) \in \Gamma \times \NSet,
  \]
  and hence H3 holds with $\mu_r = 2(1+T)$. Proceeding as in \cref{ex:FEMColl} we
  apply Jackson's Theorem to get
  \[
    \bigl\| P_n \partial_{y}^{k} g(\blank, y) 
          - \partial_{y}^{k} g(\blank, y)\bigr\|_{C^0(J,\XSet)}
    \leq \delta_{r-1} c^r \frac{M_{k,r}(y)}{n^r} 
    \qquad 
    M_{r,k} (y) = \| \partial_{y}^{k}\partial_{x}^{r} g(\blank, y) \|_{C^0(J,\XSet)}.
  \]
  We conclude $P_n \partial_{y}^{k} g(\blank,y) \to \partial_{y}^{k} g(\blank,y)$ in
  $C^0(J,\XSet)$ for all $(y,k) \in \Gamma \times \ZSet_{\geq 0}$ and
  \[
    \| P_n \partial_{y}^{k} g(\blank,y) \|_{\ZSet^0(J,\XSet)} \leq K_g(y) k!\,
    \gamma^k,
  \]
  with 
  \[
    K_g(y) =
    \delta_{r-1} c^r 
    \bigl(1 + \| \partial_{x}^{r} g(\blank,y) \|_{C^0(J,\XSet)} \bigr)
    +
    \bigl(1 + \| g(\blank,y) \|_{C^0(J,\XSet)} \bigr),
    \quad
  \gamma = \max (\mu_r, \mu_0)= 2(1+T),
  \]
  hence \cref{hyp:analyticityLRNF_Pn} is verified.
\end{example}

\subsection{Bounds on the $k$th derivative of the solution to the RLNF
problem}\label{ssec:kThBounds} 
We now make progress towards proving the analyticity of $u_n$, which is presented
below in \cref{ssec:analyticitySection}. A necessary preliminary result is the
following bound on the $k$th derivative of the solution $u_n$ to the projected
problem.

\begin{theorem}[Bounds on $k$th derivative of $u_n$] \label{thm:AnalytLRNFAlt}
  Assume $\LSet = 1$, \crefrange{hyp:domain}{hyp:randomDataLp} (general hypotheses),
  \cref{hyp:finDimNoise} (finite-dimensional noise), and
  \cref{hyp:analyticityLRNF_Pn} (analyticity of random data). The function
  $u_n(x,t,y)$ is infinitely differentiable at $(x,t,y) \in D \times J \times \Gamma$
  with respect to the variable $y_i$, for any $i \in  \NSet_m$. Further, for each $i \in
  \NSet_m$, there exist functions 
  % $\delta_i \in C^0(J)$, $D_i \in C^0_\sigma(\Gamma,C^0(J))$ 
  % $\delta_i \colon J \to \RSet_{\geq 0}$ and $D_i \colon J \times \Gamma \to \RSet_{\geq 0}$ 
  $\delta_i \in C^0(J)$, $D_{i} \in C^0_\sigma(\Gamma,C^0(J))$ 
  such that 
  \begin{align} 
     & \|\partial^k_{y_i}u_n(t,y)\|_{\XSet}
       \leq D_{i}(t,y) k! \, \delta_i(t)^k, 
	  % \quad 
     % && t\in J , \quad y \in \Gamma, \quad k \in \NSet, \label{eq:unkBoundX} \\
     && (t,y,k,n) \in J \times \Gamma \times \NSet \times \NSet, \label{eq:unkBoundX} \\
     & \|\partial^k_{y_i}u_n(t,\blank)\|_{C^0_\sigma(\Gamma,\XSet)} 
       \leq \| D_{i}(t,\blank) \|_{C^0_\sigma(\Gamma)} k! \, \delta_i(t)^k,
	  % \quad 
     % && t\in J , \quad k \in \NSet. \label{eq:unkBoundC0} 
     && (t,k,n) \in J \times \NSet \times \NSet, \label{eq:unkBoundC0} 
     \\
     & \|\partial^k_{y_i}u_n\|_{C^0_\sigma(\Gamma,C^0(J,\XSet))} 
     \leq \| D_i \|_{C^0_\sigma(\Gamma,C^0(J))} k! \, 
     \| \delta_i \|^k_{C^0(J)},
     % && k \in \NSet. \label{eq:unkBoundC0J}
     && (k,n) \in \NSet \times \NSet. \label{eq:unkBoundC0J}
  \end{align}
  Further, let $\bar K_w$, $K_v$ and $K_g$ be defined as in
  \cref{hyp:analyticityLRNF_Pn}, and let $\beta = 1 + \bar K_w$. It holds:
\begin{remunerate}
  \item If $y_i$ is a component of $y_w$, then 
  \begin{equation*}\label{eq:D_Gamma_g}
     D_i(t,y)= 
     \bigl(K_v(y) + t K_g(y)\bigr)e^{ t \beta },  
     % (\| v(y_v)\|_\XSet+t\|g(y_g)\|_{C^0(J,\XSet)} )e^{ t \beta },  
     \qquad
     \delta_i(t)=\bigg( \beta\, \frac{e^{t\beta}-1}{\beta}+1\bigg)\gamma_i.
  \end{equation*}
  \item If $y_i$ is a component of $y_v$, then 
  \begin{equation*}\label{eq:D_Gamma_w}
    % D_i(t,y)=e^{t \beta} (1+\|v(y_y) \|_{\XSet}), \qquad
    D_i(t,y)=e^{t \beta} K_v(y), \qquad
    \delta_i(t) \equiv \gamma_i.
  \end{equation*}
  \item
  If $y_i$ is a component of $y_g$, then 
 \[
  D_i(t,y)=\,  \frac{e^{t\beta}-1}{\beta} K_g(y),
  % \, \bigl(1+\|g(\blank, y_g) \|_{C^0(J,\XSet)}\bigr)
  \qquad \delta_i(t) \equiv \gamma_i.
 \]
\end{remunerate}
\end{theorem}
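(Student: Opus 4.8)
The plan is to represent $u_n$ through the variation-of-constants formula used in the proof of \cref{lemma:C0SigmaRegularity}, differentiate it $k$ times in $y_i$, and close a recursive estimate by induction on $k$. First I would record the operator bounds implied by \cref{hyp:analyticityLRNF_Pn}: writing $A_n(y)=-\id+P_nW(y)$ with $W(y)=H(w(\blank,\blank,y))$, the linearity and boundedness of $H$ (see \cref{eq:HDef}) imply that differentiation in $y_i$ commutes with $P_n$ and $H$, so $\partial_{y_i}^j\bigl(P_nW(y)\bigr)$ exists in $BL(\XSet_n)$ with $\|\partial_{y_i}^j(P_nW(y))\|_{BL(\XSet_n)}\leq\bar K_w\,j!\,\gamma_i^j$ for all $j\in\ZSet_{\geq0}$; in particular $\|A_n(y)\|_{BL(\XSet_n)}\leq 1+\bar K_w=\beta$, hence $\|S_n(t,y)\|_{BL(\XSet_n)}=\|e^{tA_n(y)}\|\leq e^{t\beta}$ and $\int_0^t\|S_n(t-s,y)\|\,ds\leq(e^{t\beta}-1)/\beta$. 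The infinite differentiability of $u_n(x,t,y)$ in $y_i$, for each fixed $(x,t)$, follows from the smoothness of the $\XSet_n$-valued map $y\mapsto u_n(t,y)$: this holds because $y\mapsto A_n(y)$ is smooth into $BL(\XSet_n)$ by the above, $A\mapsto e^{tA}$ is entire on $BL(\XSet_n)$, $y\mapsto P_nv(y)$ and $y\mapsto P_ng(s,y)$ are smooth into $\XSet_n$ (their $y_i$-derivatives exist by \cref{hyp:analyticityLRNF_Pn}), and products, compositions and $s$-integrals of such maps are again smooth; alternatively one differentiates the integral equation and invokes the standard parameter-dependence theory for linear Cauchy problems on Banach spaces.

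Writing $v_k(t,y):=\partial_{y_i}^ku_n(t,y)$ and applying $\partial_{y_i}^k$ to $u_n'=A_n(y)u_n+P_ng$ with the Leibniz rule (using $\partial_{y_i}^jA_n(y)=\partial_{y_i}^j(P_nW(y))$ for $j\geq1$), I obtain for $k\geq1$
\[
  v_k(t,y)=S_n(t,y)\,\partial_{y_i}^kP_nv(y)+\int_0^t S_n(t-s,y)\Bigl[\partial_{y_i}^kP_ng(s,y)+\sum_{j=1}^k\binom{k}{j}\bigl(\partial_{y_i}^jP_nW(y)\bigr)v_{k-j}(s,y)\Bigr]\,ds,
\]
while $v_0=u_n$ obeys the usual Duhamel identity, whence $\|v_0(t,y)\|_\XSet\leq e^{t\beta}K_v(y)+\frac{e^{t\beta}-1}{\beta}K_g(y)\leq\bigl(K_v(y)+tK_g(y)\bigr)e^{t\beta}$ because $(e^{t\beta}-1)/\beta\leq te^{t\beta}$; this is the $k=0$ instance of \cref{eq:unkBoundX} in case~(i). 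For case~(ii) one has $\partial_{y_i}^jP_nW\equiv0$ ($j\geq1$) and $\partial_{y_i}^kP_ng\equiv0$ ($k\geq1$), so $v_k(t,y)=S_n(t,y)\partial_{y_i}^kP_nv(y)$ and $\|v_k(t,y)\|_\XSet\leq e^{t\beta}K_v(y)\,k!\,\gamma_i^k$; for case~(iii) also $\partial_{y_i}^kP_nv\equiv0$ ($k\geq1$), so $v_k(t,y)=\int_0^tS_n(t-s,y)\partial_{y_i}^kP_ng(s,y)\,ds$ and $\|v_k(t,y)\|_\XSet\leq\frac{e^{t\beta}-1}{\beta}K_g(y)\,k!\,\gamma_i^k$; in both cases \cref{hyp:analyticityLRNF_Pn} delivers the stated $D_i,\delta_i$ immediately.

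The remaining case~(i) ($y_i$ a component of $y_w$) I would handle by induction on $k$, the base case $k=0$ being the estimate on $v_0$ above: for $k\geq1$ one has $\partial_{y_i}^kP_nv\equiv\partial_{y_i}^kP_ng\equiv0$, so $v_k(t,y)=\int_0^tS_n(t-s,y)\sum_{j=1}^k\binom{k}{j}(\partial_{y_i}^jP_nW(y))v_{k-j}(s,y)\,ds$. Assuming $\|v_{k-j}(s,y)\|_\XSet\leq D_i(s,y)(k-j)!\,\delta_i(s)^{k-j}$ for $1\leq j\leq k$ and $s\leq t$, and using that $t\mapsto D_i(t,y)$ and $t\mapsto\delta_i(t)=\bigl(\beta(e^{t\beta}-1)/\beta+1\bigr)\gamma_i=e^{t\beta}\gamma_i$ are nondecreasing on $J$, the bound $\|\partial_{y_i}^jP_nW(y)\|\leq\bar K_wj!\gamma_i^j$ and the identity $\binom{k}{j}j!(k-j)!=k!$ yield
\[
  \|v_k(t,y)\|_\XSet\leq\bar K_w\,D_i(t,y)\,k!\,\frac{e^{t\beta}-1}{\beta}\sum_{j=1}^k\gamma_i^j\,\delta_i(t)^{k-j}=\bar K_w\,D_i(t,y)\,k!\,\frac{\gamma_i^k\bigl(e^{kt\beta}-1\bigr)}{\beta}\leq D_i(t,y)\,k!\,\delta_i(t)^k,
\]
where the geometric sum is evaluated from $\delta_i(t)=e^{t\beta}\gamma_i$, and the last step uses $e^{kt\beta}-1\leq(e^{t\beta})^k$ together with $\bar K_w/\beta=(\beta-1)/\beta\leq1$. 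This establishes \cref{eq:unkBoundX}; \cref{eq:unkBoundC0,eq:unkBoundC0J} then follow by taking successively the $C^0_\sigma(\Gamma,\XSet)$ supremum in $y$ and the $C^0(J)$ supremum in $t$ (the latter factoring into a product of suprema since $\delta_i$ is independent of $k$ and nonnegative), after observing that $D_i\in C^0_\sigma(\Gamma,C^0(J))$ because $\sigma(y)D_i(t,y)\leq e^{T\beta}(1+T)\bigl(\sigma(y)K_v(y)+\sigma(y)K_g(y)\bigr)$ with $K_v,K_g\in C^0_\sigma(\Gamma,\RSet_{\geq0})$. I expect the main obstacles to be the differentiability/parameter-dependence step and the extraction of the $n$-homogeneous operator bounds from \cref{hyp:analyticityLRNF_Pn}; once the Duhamel recursion and those bounds are in place, the induction closes precisely because of the choice $\delta_i(t)=e^{t\beta}\gamma_i$, which makes the geometric sum telescope.
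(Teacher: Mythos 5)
Your proposal is correct and follows essentially the same route as the paper: Duhamel representation of $u_n$, Leibniz differentiation of the Cauchy problem, the $n$-homogeneous operator bounds $\|P_n H(\partial_{y_i}^l w)\|\leq \bar K_w\, l!\,\gamma_i^l$ from \cref{hyp:analyticityLRNF_Pn}, the same three-case split, and a recursive estimate for the kernel case. The only (cosmetic) difference is that the paper closes the recursion by unrolling $R_k\leq\nu\sum_{l}\gamma_i^l R_{k-l}$ into $R_k\leq\nu(\nu+1)^{k-1}\gamma_i^k R_0$, whereas you verify the closed form directly by strong induction after observing that $\delta_i(t)=e^{t\beta}\gamma_i$ makes the geometric sum telescope — both arguments are equivalent.
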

\begin{remark}[Contractivity and time-independent $\delta_i$]\label{rem:contractivity}
  The bound \cref{eq:unkBoundX} shows that, when uncertainty is placed on
  the kernel, $\delta_i$ grows with $t$. This has the important implication (see
  \cref{thm:C-analyticity-C0}) the radius of analyticity of the solution $u_n$
  shrinks as $t$ increases. While this growth is unavoidable in general, the bound on
  $\delta_i$ turns out to be pessimistic in some cases. On linear problems
  where a contraction occurs, such as the one obtained when linearising the
  nonlinear neural field around stable stationary states, the spectrum of the
  compact operator $A(y_w)$ contains eigenvalues with strictly negative real parts,
  and bounded away from the origin when $y_w$ varies in $\Gamma_w$. In
  \cref{sssec:kthBoundsContractive} we make this notion of contractivity precise, and
  show that, for sufficiently large $n$, $\delta_i$ can be made independent of $t$.
\end{remark}
\begin{proof}[Proof of \cref{thm:AnalytLRNFAlt}]
  The proof relies on establishing the existence of partial derivatives, then
  estimate them by differentiating \cref{eq:unEquation} $k$ times with respect to
  a variable $y_i \in y$ and write $\partial^k_{y_i}u_n(t,y)$ as a solution of a
  Cauchy problem. 
  We pick $m,n \in \NSet $, $k \in \ZSet_{\geq 0}$, and $t \in J$, which will remain fixed throughout the
  proof. In preparation for the main argument, we introduce some useful linear
  operators. Firstly, for all $y_w \in \Gamma_w$ the operator
  \begin{equation}\label{eq:AnDef}
  A_n(y_w) = -\id + P_n W(y_w)
  \end{equation}
  is in $BL(\XSet_n)$, and generates the uniformly continuous semigroup $(e^{t
  A_n(y_w)})_{t\geq0}$ on the Banach space $\XSet_n$. Importantly, owing to
  \cref{hyp:analyticityLRNF_Pn}, each realization of the operator is bounded in norm by a
  positive constant, independent on $n$ and $y_w$, 
  \begin{equation}\label{eq:AnBound}
    \| A_n(y_w)\|_{BL(\XSet)} \leq 1+ \bar K_w = \beta \qquad 
    \text{for all $y_w \in \Gamma_w$.}
  \end{equation}

  Secondly, for fixed $i \in \NSet_m$, and $y_w \in \Gamma_w$ we introduce the
  operators $\{B^l_{n,i}(y_w) \colon l \in \NSet_k\}$,
  \[
    B^l_{n,i}(y_w) \colon v \mapsto P_n H( \partial_{y_i}^{l} w(\blank, \blank, y_w) ) = 
   P_n \int_D \partial_{y_i}^{l} w(\blank,x',y_w)  v(x')\, dx',
  \]    
  whose action can be interpreted using \cref{eq:HDef} as done 
  in~\cite[Section 6]{avitabile2024NeuralFields}. By \cref{hyp:analyticityLRNF_Pn},
  $\partial_{y_i}^{l} w(\blank, \blank, y_w) \in \WSet$ and hence
  $H(\partial_{y_i}^{l} w(\blank, \blank, y_w)) \in K(\XSet) \subset BL(\XSet)$ for
  all $y_w \in \Gamma_w$ and $l \in \NSet_k$. Using the boundedness of $P_n$ we
  conclude $\{  B^l_{n,i}(y_w) \}_l \subset BL(\XSet_n)$ for
  all $y_w$ and $i$. Combining
 \cref{hyp:analyticityLRNF_Pn} with~\cite[Lemma 4.2]{avitabile2024NeuralFields}, we can
 bound the operator estimate
 \begin{equation}\label{eq:BlBound}
   % \begin{aligned}
   % \| B^l_{n,i}(y_w)\|_{BL(\XSet)} 
  % = \| H( P_n \partial_{y_i}^{l} w(\blank, \blank, y_w) ) \|_{BL(\XSet)} 
 % &\leq \| P_n \partial_{y_{i}}^{l} w(\blank, \blank, y_w) \|_{\WSet} \\
 % &  
 % % \leq \bigl(1+\|w(\cdot,\cdot,y) \|_{\WSet}\bigr) \gamma_i^l l!
  % % \leq \beta \gamma_i^l l!.
  % \leq \bar K_w \gamma_i^l l!.
   % \end{aligned}
   \| B^l_{n,i}(y_w)\|_{BL(\XSet)} 
  = \| H( P_n \partial_{y_i}^{l} w(\blank, \blank, y_w) ) \|_{BL(\XSet)} 
 \leq \| P_n \partial_{y_{i}}^{l} w(\blank, \blank, y_w) \|_{\WSet} 
  \leq \bar K_w \gamma_i^l l!.
 \end{equation}

After these preparations, we fix $y=(y_i,y_i^*)$, where $y_i$ is a component of one
of the vectors $y_w$, $y_g$, $y_v$, and turn to the estimation of
$\partial_{y_i}^k u_n(t,y)$, where $u_n$ is the unique classical solution to (see proof of
\cref{lemma:C0SigmaRegularity}) 
\begin{equation}\label{eq:unEquationLocal}
    u'_n(t,y) = A_n(y_w) u_n(t,y) + P_n g(t,y_g), \qquad t \in [0,T], 
    \qquad u_n(0,y) = P_n v(y_v),
\end{equation}
that is,
\[
u_n(t,y) 
    = e^{t A_n(y_w)} P_n v(y_v) 
    + \int_{0}^{t} e^{(t-s)A_n(y_w)} P_n  g(s,y_g)\,d s 
\]
By main hypothesis the mappings $y_i \mapsto A_n(y_i,y_i^*)$, $y_i \mapsto
g(t,y_i,y_i^*)$, and $y_i \mapsto v(y_i,y^*) $ are of class $C^k$ in their respective
function spaces, hence $\partial_{y_i}^k u_n(t,y)$ is well defined. We derive an
evolution equation for $\partial_{y_i}^k u_n$ by differentiating $k$ times
\cref{eq:unEquationLocal} with respect to $y_i$, using the general Leibiniz
rule for differentiation, and noting that $\partial_{y_i}$ commutes with
$\partial_{t}$ and with $P_n$, obtaining the recursion
\begin{equation}\label{eq:ukEvol}
\begin{aligned}
& \partial_{t} \partial_{y_i}^{k} u_n = A_n \partial_{y_i}^{k} u_n
                + P_n\partial_{y_i}^{k} g
                + \sum_{l = 1}^{k} \binom{k}{l} B^l_{n,i} \,
                \partial_{y_i}^{k-l} u_n
                & \textrm{on $[0,T]$,} \\
& \partial_{y_i}^{k} u_n(0) = P_n \partial_{y_i}^{k} v.
                & 
\end{aligned}  
\end{equation}

For fixed $k$ we regard \cref{eq:ukEvol} as an inhomogeneous Cauchy problem on the Banach space
$\XSet_n$, in the variable $\partial^k_{y_i} u$, with fixed parameter $y$, with forcing
dependent on $\partial_{y_i}^{k} g $, $\partial_{y_i}^{k} w$, $u_n,\partial_{y_i}u_n,
\cdots, \partial_{y_i}^{k-l}u_n$, 
By \cite[Theorem 13.24]{vanneervenFunctionalAnalysis2022}, for all $y \in \Gamma$
such that the forcing term is in $L^1(0,T;\XSet_n)$ the Cauchy problem \cref{eq:ukEvol}
admits a unique strong solution given by
\begin{equation}\label{eq:ukBound}
  \begin{aligned}
    \partial_{y_{i}}^{k}u_n(t,y) 
    & = e^{t A_n(y_w)} P_n \partial_{y_i}^{k} v(y_v) 
    + \int_{0}^{t} e^{(t-s)A_n(y_w)} P_n  \partial_{y_i}^kg(s,y_g)\,d s \\
    & + \sum_{l=1}^k \binom{k}{l} \int_{0}^{t} e^{(t-s)A_n(y_w)} 
            B^l_{n,i}(y_w)
                  \partial_{y_i}^{k-l} u_n(s,y) \, ds
  \end{aligned}
\end{equation}
which is therefore in $C(J,\XSet_n)$.

We will now show that for all $y \in \Gamma$ and $k \in \NSet$ the forcing
term is in $L^1(0,T,\XSet_n)$, hence $\partial_{y_i}^k u_n$ exists and is given by
\cref{eq:ukBound}, and find a bound for $\|
\partial_{y_i}^{k}u_n(t,y) \|_\XSet$. An inspection reveals
that only one of the three terms in the right-hand side of \cref{eq:ukBound} is
nonzero, depending on whether $y_i$ is a component of $y_v$, $y_g$, or $y_w$, so we
proceed on a per-case basis.

 \textit{Case 1: bound \cref{eq:unkBoundX} when $y_i$ is a component of $y_v$.} The Cauchy problem in 
 \cref{eq:ukEvol} becomes homogeneous as $\partial_{y_i}^{k} g=0$ and
 $\partial_{y_{i}}^{k} w =0$ so all forcing terms vanish. Hence, it admits the
 solution
 \[
  \partial_{y_{i}}^{k}u_n(t,y) = e^{t A_n(y_w)} P_n \partial_{y_i}^{k} v(y_v).
 \]
Using the boundedness of $A_n$, the bound \cref{eq:AnBound}, %\cref{hyp:DataDerv} in
and \cref{hyp:analyticityLRNF_Pn} we estimate 
 \[
 	% \begin{aligned}
 	%   \| \partial_{y_{i}}^{k}u_n(t,y)\|_\XSet 
    % & \leq e^{t \|A_n(y_w)\|_{BL(\XSet)}} \| P_n \partial_{y_i}^{k} v(y_v)\|_\XSet \\ 
	  % & \leq e^{t \beta} (1+\|v(y_v) \|_{\XSet}) k!\, \gamma_i^k.
 	% \end{aligned}
 	  \| \partial_{y_{i}}^{k}u_n(t,y)\|_\XSet 
    \leq e^{t \|A_n(y_w)\|_{BL(\XSet)}} \| P_n \partial_{y_i}^{k} v(y_v)\|_\XSet \\ 
	  \leq e^{t \beta} K_v(y) k!\, \gamma_i^k.
 \]
that gives \cref{eq:unkBoundX} for all $i \in \NSet_m$ such that $y_i \in y_v$, with
\[
 D_i(t,y)=e^{t \beta} K_v(y), \quad \quad \quad
 \delta_i=\gamma_i.
\]

\textit{Case 2: bound \cref{eq:unkBoundX} when $y_i$ is a component of $y_g$.} Here $\partial_{y_{i}}^{k} w =0$,
$\partial_{y_i}^{k} v = 0$ and the forcing term in \cref{eq:ukEvol} is equal to $P_n\partial_{y_i}^k g$. By
 \cref{hyp:analyticityLRNF_Pn} it holds that $P_n\partial_{y_i}^k g(\blank,y_g)$ is in $ C^0(J,\XSet_n) \hookrightarrow
 L^1(0,T;\XSet_n)$ hence, we can write the solution as
 \[
 	 \partial_{y_{i}}^{k}u_n(t,y) = \int_0^t e^{(t-s)A_n(y_w)}P_n\partial_{y_i}^k g(s,y_g)ds.
 \]
 By taking norms and proceeding as in the argument for Case 1 we obtain
   \[
 	% \begin{aligned}
 	%   \| \partial_{y_{i}}^{k}u_n(t,y)\|_\XSet & \leq \int_0^t e^{(t-s)\|A_n(y_w)\|_{BL(\XSet)}}\|P_n\partial_{y_i}^k g(s,y_g)\|_{\XSet}ds\\ 
	  % & \leq (1+\|g(\blank, y_g) \|_{C^0(J,\XSet)}) k!\, \gamma_i^k \int_0^t e^{(t-s)\beta}ds .
 	% \end{aligned}
 	  \| \partial_{y_{i}}^{k}u_n(t,y)\|_\XSet 
    \leq \int_0^t e^{(t-s)\|A_n(y_w)\|_{BL(\XSet)}}\|P_n\partial_{y_i}^k g(s,y_g)\|_{\XSet}ds 
	  \leq  K_g(y) k!\, \gamma_i^k \int_0^t e^{(t-s)\beta}ds .
 \]
 Integrating gives
 \[
 	 \| \partial_{y_{i}}^{k}u_n(t,y)\|_\XSet \leq K_g(y) k!\, \gamma_i^k \, \frac{e^{t\beta}-1}{\beta}
 \]
 hence it follows
 \[
  D_i(t,y)=\,  \frac{e^{t\beta}-1}{\beta}\, K_g(y), \qquad \delta_i=\gamma_i.
 \]
 
 \textit{Case 3: bound \cref{eq:unkBoundX} when $y_i$ is a component of $y_w$.} In this last case, the forcing of
 the Cauchy problem in \cref{eq:ukEvol} is $\sum_{l = 1}^{k} \binom{k}{l}
 B^l_{n,i}(y_w) \partial_{y_i}^{k-l} u_n$. 
 This forcing depends on the first $k$ derivatives of $w$ (via the operators
 $B^l_{n,i}$), as well as on the first $k-1$ derivatives of $u_n$. Hence, 
 % analyticity of the data $w$ is not sufficient to prove the theorem statement, as done in other cases, and
 we need to argue by induction. We prove two claims.

 \textit{Claim 1. For all $r \in \NSet_k$ it holds $\partial_{y_i}^{r} u_n \in
 C^0(J,\XSet_n)$ on $\Gamma_w$}. We use a strong induction argument to prove this
   claim. The base case $r=0$ reduces to $u_n\in C^0(J,\XSet_n)$ on $\Gamma_w$, which
   holds by \cref{lemma:C0SigmaRegularity}. The induction step
   assumes the first $r-1$ derivatives are continuous functions and differentiating
   \cref{eq:unEquation} $r$ times we find that the evolution equation for
   $\partial_{y_i}^{r} u_n$ is a Cauchy problem with forcing 
 \[
   t \mapsto 
   \sum_{l = 1}^{r} \binom{r}{l} B^l_{n,i}(y_w) \, \partial_{y_i}^{r-l} u_n(t,y),
 \]
 which is in $C^0(J,\XSet_n)$ for any $y_w \in \Gamma_w$ because 
 $t \mapsto \partial_{y_i} u_n(t,\blank), \ldots, t \mapsto \partial^{m-1}_{y_i}
 u_n(t,\blank)$ are in $C^0(J,\XSet_n)$ by induction hypothesis and the operators
 $B^l_{n,i}(y_w)$ preserve continuity in $t$, as one can
 see from
\[
  ( B^l_{n,i}(y_w) v) (t) = P_n \int_D \partial_{y_i}^{k} w(\blank,x',y_w)  v(x',t)\, dx'.
\]
Since the forcing for the Cauchy problem for $\partial_{y_i}^{m} u_n$ is in
$C^0(J,\XSet_n)$,
we can apply again \cite[Theorem 13.24]{vanneervenFunctionalAnalysis2022}, to
conclude the existence of a unique strong solution $\partial_{y_i}^{m} u_n\in
C^0(J,\XSet_n)$.
The induction argument is now complete.

\textit{Claim 2. Bound \cref{eq:unkBoundX} holds}. The solution of the Cauchy problem
\cref{eq:ukEvol} when $y_i \in y_w$ is
\[
 \begin{aligned}
    \partial_{y_{i}}^{k}u_n(t,y) 
    = \sum_{l=1}^k \binom{k}{l} \int_{0}^{t} e^{(t-s)A_n(y_w)} 
            B^l_{n,i}(y_w)
            \partial_{y_i}^{k-l} u_n(s,y) \, ds.
  \end{aligned}
\]
Using the bounds \cref{eq:BlBound,eq:AnBound} for $\| B_{n,i}^l(y_w)\|$ we estimate
 \[
 \begin{aligned}
 \bigg\| \frac{\partial_{y_{i}}^{k}u_n(t,y) }{k!} \bigg\|_\XSet & \leq  \sum_{l=1}^k  \int_{0}^{t} e^{(t-s)\beta} 
            \frac{\| B^l_{n,i}(y_w) \|_{BL(\XSet)} }{l!}\,
                 \frac{\| \partial_{y_i}^{k-l} u_n(s,y) \|_\XSet}{(k-l)!}  \, ds, \\
                 &  \leq  \beta  \sum_{l=1}^k \gamma_i^l \,\int_{0}^{t} e^{(t-s)\beta} \,
                 \frac{\| \partial_{y_i}^{k-l} u_n(s,y) \|_\XSet}{(k-l)!}  \, ds, \\            
  \end{aligned}
 \]
 We set
 \[
  R_r(t,y)=\sup_{\tau\in [0,t]}  \bigg\| \frac{\partial_{y_{i}}^{r}u_n(\tau,y) }{r!}
  \bigg\|_\XSet, \qquad  r \in \ZSet_k
 \]
  and derive the following estimate
\begin{equation}\label{eq:RkEstimate}
    \begin{aligned}
      R_k(t,y) &  \leq \beta \sum_{l=1}^k   \gamma_i^l \, \sup_{\tau\in[0,t]}\int_{0}^{\tau} e^{(\tau-s)\beta} \,
      \frac{\| \partial_{y_i}^{k-l} u_n(s,y) \|_\XSet}{(k-l)!}  \, ds \\
               &  \leq \beta \sum_{l=1}^k    \gamma_i^l \, \sup_{\tau\in[0,t]}\int_{0}^{\tau} e^{(\tau-s)\beta} \,
               R_{k-l}(t,y)  \, ds \\
               &  = \beta \sum_{l=1}^k  \gamma_i^l \,  R_{k-l}(t,y)  \sup_{\tau\in[0,t]}\int_{0}^{\tau} e^{(\tau-s)\beta}\, ds\\
               & = \beta \, \frac{e^{t\beta}-1}{\beta} \sum_{l=1}^k  \gamma_i^l \,  R_{k-l}(t,y) 
               =: \nu(t) \sum_{l=1}^k  \gamma_i^l \,  R_{k-l}(t,y).
    \end{aligned}
  \end{equation}
  Using the bound recursively on the elements of the sum gives, omitting dependence on
  $(y,t)$
  \[
    \begin{aligned}
      R_k 
      & \leq  
      \nu \sum_{l=1}^k  \gamma_i^l \,  R_{k-l} 
      = 
      \nu \biggl[ \gamma_i R_{k-1} + \sum_{l=2}^k  \gamma_i^l \,  R_{k-l} \biggr]
      \\
      & \leq 
      \nu \biggl[  \nu \sum_{r=1}^{k-1}  \gamma_i^{r+1} \,  R_{k-1-r} 
      + \sum_{l=2}^k  \gamma_i^l \,  R_{k-l} \biggr]
      = 
      \nu (\nu+1)\sum_{l=2}^k  \gamma_i^l \,  R_{k-l}
      \\
      & 
      \leq 
      \nu (\nu+1)^2 \sum_{l=3}^k  \gamma_i^l \,  R_{k-l}
      \leq \ldots \leq \nu (\nu+1)^{k-1} \gamma_i^k R_0
    \end{aligned}
  \]
  and therefore 
  \[
    R_k(t,y)\leq  \bigg[ \bigg( \beta \, \frac{e^{t\beta}-1}{\beta}+1\bigg) \gamma_i \bigg]^{k}\,R_0(t,y)
  \]
  Applying~\cite[Theorem 6.3]{avitabile2024NeuralFields} under finite-dimensional noise
  (\cref{hyp:finDimNoise}) we have 
  \[
    R_0(t,y)=\|u_n(\blank,y)\|_{C([0,t],\XSet)} 
    \leq \big(
      \|  P_n v(y) \|_{\XSet} + \| P_n g(\blank ,y) \|_{C^0(J,\XSet)} t 
    \bigr) e^{\| P_nW(y) \| t}
  \]
  and by \cref{hyp:analyticityLRNF_Pn} we have
  \[
    R_0(t,y) \leq \bigl( K_v(y) + K_g(y) t\bigr) e^{\beta t}
  \]
  We conclude that for any $(t, y )\in J \times \Gamma$
  \[
    \frac{ \| \partial_{y_{i}}^{k}u_n(t,y)\|_\XSet }{k!} \leq R_k(t,y)\leq D_i(t,y)
    \delta_i(t)^{k}
  \]
  where the coefficients read
  \[
    D_i(t,y)=\bigl( K_v(y) + K_g(y) t\bigr) e^{\beta t} ,  \quad
    \quad \quad \delta_i(t)=\bigg( \beta\, \frac{e^{t\beta}-1}{\beta}+1\bigg)\gamma_i.
  \]

\textit{Derivation of bounds \cref{eq:unkBoundC0,eq:unkBoundC0J}.} We have so far
established \cref{eq:unkBoundX}. By \cref{hyp:analyticityLRNF_Pn} the functions $K_g$
and $K_v$ (and hence $y \mapsto D_i(t,y)$) have a finite $C^0_\sigma(\Gamma)$ norm.
Taking the norm in $C^0(\Gamma,\XSet)$ on both sides of \cref{eq:unkBoundX} gives
\cref{eq:unkBoundC0}. To derive 
\cref{eq:unkBoundC0J} we start from \cref{eq:unkBoundX} and estimate
\[
  \begin{aligned}
    &
    \| \partial_{y_i}^k u_n (\blank, y) \|_{C^0(J,\XSet)} 
    \leq 
    k! \sup_{t \in J} D_{i}(t,y) \delta_i(t)^k 
    \leq 
    k!
    \bigg( \sup_{t \in J} D_{i}(t,y) \bigg)
    \bigg( \sup_{t \in J} \delta_i(t) \bigg)^k, 
  \end{aligned}
\]
where we have used the fact that either $\delta_i(t) \equiv \gamma_i$, or
$\delta_i(t) = \nu(t) \gamma_i$, with $\nu(t) \geq 1$ for all $t$. Taking the
norm in $C^0_\sigma(\Gamma~C^0(J,\XSet))$ gives \cref{eq:unkBoundC0J}.
\end{proof}

\subsubsection{Bounds on the $k$th derivative of $u_n$ for contractive
RLNFs}\label{sssec:kthBoundsContractive} 
Following from \cref{rem:contractivity}, we now introduce an abstract condition
(\cref{hyp:contractivity}) which describes contractivity of the parameter-dependent
semigroup associated with the linear operators $A_n(y)$ appearing in the proof or
\cref{thm:AnalytLRNFAlt}. The upcoming \cref{cor:contractivity} shows that this condition
is sufficient to make $\delta_i(t)$ in \cref{thm:AnalytLRNFAlt} independent of $t$,
for sufficiently large $n$.

\begin{hypothesis}[Contractivity of discretised linear operators]\label{hyp:contractivity}
  There exist $M \in
  \RSet_{\geq 1}$, $\epsi \in \RSet_{>0}$, and $n_0 \in \NSet$ such that for the
  family of operators $\{ A_n \}_n$ defined in \cref{eq:AnDef} it holds
  \[
    \| e^{t A_n(y)} \| \leq M e^{-\epsi t}
    \qquad 
    (y,t,n) \in \Gamma_w \times \RSet_{\geq 0} \times \NSet_{\geq n_0}.
  \]
\end{hypothesis}
\begin{corollaryE}[Contractive case of
  \cref{thm:AnalytLRNFAlt}]\label{cor:contractivity}
  Assume the hypotheses of \cref{thm:AnalytLRNFAlt} and
  \cref{hyp:contractivity} (contractvity). If $y_i$ is a component of $y_w$, then
  estimates \crefrange{eq:unkBoundX}{eq:unkBoundC0J} hold for $n \geq n_0$ with the
  constant function $\delta_i(t) \equiv (1+M\beta/\epsi)$.
\end{corollaryE}
\begin{proofE}
  The proof is a slight amendment of Case 3 in the proof of \cref{thm:AnalytLRNFAlt},
  which runs identical to this one up to the following bound in Claim 2:
 \[
 \bigg\| \frac{\partial_{y_{i}}^{k}u_n(t,y) }{k!} \bigg\|_\XSet 
     \leq  \sum_{l=1}^k  \int_{0}^{t} 
       \big\| e^{(t-s)A_{n}(y)} \big\|
       \frac{\| B^l_{n,i}(y_w) \|_{BL(\XSet)} }{l!}\,
       \frac{\| \partial_{y_i}^{k-l} u_n(s,y) \|_\XSet}{(k-l)!}  \, ds.
 \]
 As in the proof of \cref{thm:AnalytLRNFAlt} we use the bounds
 \cref{eq:BlBound,eq:AnBound}, but the contractivity hypothesis on the semigroups
 allow us to introduce the negative exponent $-\epsi$ in the bound,
 \[
   \bigg\| \frac{\partial_{y_{i}}^{k}u_n(t,y) }{k!} \bigg\|_\XSet 
     \leq  M \beta \sum_{l=1}^k \gamma_i^l 
     \int_{0}^{t} 
       e^{- \epsi (t-s)} 
     \bigg\| \frac{\partial_{y_i}^{k-l} u_n(s,y) }{(k-l)!} \bigg\|_\XSet  \, ds, 
 \]
 and defining $R_k$ as in the proof of \cref{thm:AnalytLRNFAlt} we estimate
 \[
   \begin{aligned}
     R_k(t,y) 
     & \leq 
       M \beta \biggl( \sup_{\tau \in [0,t]} \int_{0}^{\tau}e^{-\epsi(t-s)} \,ds\biggr) 
       \sum_{l=1}^k \gamma_i^l R_{k-l}(t,y) 
      \leq 
       M \beta \frac{1-e^{-\epsi t}}{\epsi}
       \sum_{l=1}^k \gamma_i^l R_{k-l}(t,y)
       \\
     &  \leq 
       \frac{M \beta}{\epsi} \sum_{l=1}^k \gamma_i^l R_{k-l}(t,y)
       =:
       \nu \sum_{l=1}^k \gamma_i^l R_{k-l}(t,y),
   \end{aligned}
 \]
 which is analogous to \cref{eq:RkEstimate} upon replacing $\nu(t)$ by the
 time-independent $\nu = M\beta/\epsi$. Following \cref{thm:AnalytLRNFAlt} with
 this new $\nu$ gives the assertion.
\end{proofE}

A natural question arises as to how one can check \cref{hyp:contractivity} starting
from conditions on the spectrum of the compact linear operators $W(y)$ when $y$
varies in $\Gamma_w$. Intuitively we expect that \cref{hyp:contractivity} may hold
if the spectrum of $W(y)$ is contained in $\{ z \in \CSet
\colon \real z < 1 \}$, independently of $y$. Hypothesis 2 in
\cref{lem:contractivityCondition} makes this
spectral condition precise. \Cref{lem:contractivityCondition} shows that there are
further conditions required to check \cref{hyp:contractivity} concerning the
compactness of $\Gamma_w$ (Hypothesis 1 in the lemma), and the projector $P_n$ (Hypothesis 2). It
is possible to relax $\Gamma_w$ and obtain contractivity in the absence of
projectors, provided one amends Hypothesis 2 so as to require uniform sectoriality of
the operators. We do not give a theoretical result in which the
hypothesis on $P_n$ is relaxed, albeit we show numerical evidence of convergence in
\cref{sec:numerics}.

\begin{lemmaE}[Checking the contractivity hypothesis \labelcref{hyp:contractivity}]
  \label{lem:contractivityCondition}
  Let $\{ A_n \}_n \subset C^0(\Gamma_w, BL(\XSet))$ be the sequence of functions
  $A_n \colon y \mapsto -\id + P_n W(y)$, where $\{ P_n \}_n$ is a family of
  projectors on $\XSet$ to $\XSet_n$, and $W \in C^0(\Gamma_w, K(\XSet))$
  with $\Gamma_w \subset \RSet^{m_w}$, and $K(\XSet)$ the set of compact operators in
  $BL(\XSet)$. If:
  \begin{remunerate}
  \item the domain $\Gamma_w$ is compact;
  \item there exists $ \mu > 0$ such that
    \begin{equation}\label{eq:spectralGapAy}
      \sup_{y \in \Gamma_w} \sup_{\lambda \in \sigma(W(y))} \real \lambda
      < 1 - \mu;
    \end{equation}
  \item the projectors satisfy $P_n v \to v$ as $n \to \infty$ for all $v \in \XSet$;
  \end{remunerate}
  then \cref{hyp:contractivity} holds.
\end{lemmaE}
\begin{proofE}
  The proof is based on bounds for the semigroup associated with
  a perturbation $A + (A_n - A)$ to the linear operator $A_n$, for which $A$ will be
  defined in the steps below. 

  \textit{Step 1: homogeneous bound on $e^{tA(y)}$.} The mapping $A \colon \Gamma_w \to
  BL(\XSet)$, with $A(y) = -\id + W(y)$, is continuous on the compact $\Gamma_w$, and
  hence the set $\{ A(y) \in
  BL(\XSet) \colon y \in \Gamma_w \}$ is a compact subset of operators in
  $BL(\XSet)$, whose spectra lie in the halfplane $\{ z \in \CSet \colon \real z <
  -\mu\}$ by \cref{eq:spectralGapAy}. By \cite[Chapter III. Lemma
  6.2]{daleckiiStabilitySolutionsDifferential2002}, there exists $M \geq 1$ such that
  \[
     \|  e^{t A(y)} \| \leq M e^{-\mu t} \qquad (y,t) \in \Gamma_w \times \RSet_{\geq 0}.
  \]

  \textit{Step 2: uniform convergence of $P_nW$ to $W$ on $\Gamma_w$.} Consider the
  sequence $\{ W_n \}_{n \in \NSet} \subset C^0(\Gamma_w, BL(\XSet))$
  with elements defined by the functions $W_n(y) := P_n W(y)$. For any  $y \in
  \Gamma_w$, the operator $W(y)$ is compact, hence $W_n(y) \to W(y)$ by \cite[Lemma
  12.1.4]{atkinson2005theoretical}, that is, $W_n$ converges to $W$ pointwise.
  We now show that the convergence is uniform on $\Gamma_w$.

  The hypothesis on $P_n$ guarantees that the functions $W_n$ are uniformly bounded
  by the Principle of Uniform Boundedness \cite[Theorem
  2.4.4]{atkinson2005theoretical}, hence $\kappa := \sup_{n \in \NSet} \| P_n \| <
  \infty$ and
  \[
    \sup_{n \in \NSet} \| W_n \|_{C(\Gamma, BL(\XSet_n))} 
    % =
    % \sup_{n \in \NSet} \| P_n W \|_{C(\Gamma, BL(\XSet_n))} 
      \leq \bigg( \sup_{n \in \NSet} \| P_n \| \bigg) \| W \|_{C(\Gamma_w, BL(\XSet))}
      \leq \kappa \| W \|_{C(\Gamma_w, BL(\XSet))}
      < \infty. 
  \]
  Further, the functions $W_n$ are equicontinous: fix $\rho >0$; since $W \in C^0(\Gamma_w,
  BL(\XSet))$ there exists $\delta > 0$ such that $\| W(y) - W(y')\| < \rho/\kappa$
  provided $ \| y - y' \| < \delta$; hence $\| y - y' \| < \delta$ implies
  \[
      \| W_n (y) - W_n (y') \| 
      \leq \| P_n \| \| W(y) - W(y')\|
     \leq \rho.
     \qquad n \in \NSet.
  \] 
  By the Arzelà--Ascoli Theorem \cite[Theorem 1.6.3]{atkinson2005theoretical},
  generalised to the case of functions with values in Banach spaces,
  % \da[]{Must find a proper reference}
 the convergence of $W_n$ to $W$ is uniform on
  the compact set $\Gamma_w$.

  \textit{Step 3: homoegeneous bound on $e^{t A_n(y)}$.} We write $A_n(y) = A(y) + (A_n(y)
  -A(y))$. Owing to the bound for $e^{t A(y)}$ found in Step 1, we can apply
  \cite[Theorem 1.3, page 158]{engelOneparameterSemigroupsLinear2000} and deduce
  that, for any $(y,t,n) \in \Gamma_w \times \RSet_{\geq 0}\times \NSet$, it holds
  \[
    \| e^{t A_n(y)} \| \leq M \exp \big[ (-\mu + M \| A_n(y) - A(y) \|) t \big] 
    = M \exp \big[ (-\mu + M \| P_n W(y) - W(y) \|) t \big]
    % \qquad 
    % (y,t,n) \in \Gamma_w \times \RSet_{\geq 0}\times \NSet,
  \]
  Since $P_n W$ converges uniformly to $W$ on $\Gamma_w$, there exists an $n_0$,
  independent of $y$, such that $\| P_n W(y) -W(y) \| \leq \mu/M$ for all $n \geq  n_0$
  and $y \in \Gamma_w$. We set
  \[
    \epsi := \mu - M \sup_{n \geq  n_0} \sup_{y \in \Gamma_w} \| P_n W(y) - W(y) \| > 0,
  \]
  and derive, for any $(y, t, n) \in \Gamma_w \times \RSet_{\geq 0}\times \NSet_{\geq 
  n_0}$, the estimate
  \[
    \| e^{t A_n(y)} \| \leq M \exp 
    \Bigl[ \Bigl(-\mu + M \sup_{n \geq  n_0} \sup_{y \in \Gamma_w} \| P_n W(y) - W(y) \|\Bigr) t \Bigr] 
    = M e^{-\epsi t}.
    % \big[ t (-\mu + M \| A_n(y) - A(y) \|) t \big] =
  \]
\end{proofE}

\subsection{Analyticity of $u_n$}\label{ssec:analyticitySection} 
In preparation for the analyticity proof, we present a technical statement that
follows from \cref{thm:AnalytLRNFAlt}, and concerns analyticity and Fréchet
differentiability of $u_n$, as a function on $\Gamma_i$. Here and henceforth, we
study analyticity of mappings on Banach spaces, adopting the definition and
theory in \cite[Definition 4.3.1, and Section 4.3]{buffoni_analytic_2003}, according
to which a function $\psi \colon \BSet_1 \to \BSet_2$ a
function is said to be $\RSet$- or $\CSet$-analytic depending on whether $\BSet_1$ and
$\BSet_2$ are Banach spaces over $\RSet$ or $\CSet$. The sketch in
\cref{fig:analyticitySketch} is helpful in navigating the material in this section.
\begin{theorem}[$\RSet$-analyticity of the semidiscrete
  solution with values in $\XSet$]\label{thm:r-analyticity}
  Assume the hypotheses of \cref{thm:AnalytLRNFAlt}, fix $n \in \NSet$, $t \in J$,
  $i \in \NSet_m$, and $y_0\in \Gamma_i$. The mapping
  \[
    \psi_t \colon \Gamma_i \subset \RSet \to C^0_{\sigma_i^*}(\Gamma_i^*,\XSet), \qquad y_i \mapsto
    u_n(t,y_i,\blank).
  \]
  is $\RSet$-analytic at $U_{\tau_i} = \{ y \in \Gamma_i \colon |y-y_0|  \leq \tau_i
  \}$, for any $0 < \tau_i < 1/\delta_i(t)$,
  and its $k$th Fréchet derivative at $y_0 \in U_{\tau_i}$ is given by 
  \begin{equation}\label{eq:ukFrechet}
    d^k \psi_t[y_0] \colon \Gamma_i^k \subset \RSet^k \to C^0_{\sigma_i^*}(\Gamma_i^*,\XSet)
    \qquad (\gamma_1,\ldots,\gamma_k) \mapsto 
    \partial_{y_i}^{k} u_n(t,y_0,\blank) \gamma_1 \cdots \gamma_k.
  \end{equation}
\end{theorem}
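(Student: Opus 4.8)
The plan is to read off $\RSet$-analyticity from the Cauchy-type derivative bounds of \cref{thm:AnalytLRNFAlt} and the characterisation of analytic maps between real Banach spaces in \cite[Section 4.3]{buffoni_analytic_2003}: a $C^\infty$ map is $\RSet$-analytic at $y_0$, with Taylor series about $y_0$ converging on the closed ball of radius $\tau_i$, as soon as $\sum_{k\ge 0}\tfrac{1}{k!}\|d^k\psi_t[y_0]\|\,\tau_i^k<\infty$ and the Taylor remainder about $y_0$ tends to $0$ on that ball. I would fix $n,t,i$ and set $\BSet:=C^0_{\sigma_i^*}(\Gamma_i^*,\XSet)$. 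For each $k$ and each $y_i\in\Gamma_i$, the slice $y_i^*\mapsto\partial_{y_i}^k u_n(t,y_i,y_i^*)$ is continuous, being the restriction to $\{y_i\}\times\Gamma_i^*$ of the jointly continuous function $\partial_{y_i}^k u_n(t,\blank)\in C^0_\sigma(\Gamma,\XSet)$ furnished by \cref{thm:AnalytLRNFAlt}; and since $\sigma=\sigma_i\sigma_i^*$ with $\sigma_i(y_i)$ a fixed positive constant, \cref{eq:unkBoundC0} gives $\|\partial_{y_i}^k u_n(t,y_i,\blank)\|_{\BSet}\le\sigma_i(y_i)^{-1}\|D_i(t,\blank)\|_{C^0_\sigma(\Gamma)}\,k!\,\delta_i(t)^k<\infty$. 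Hence each $\partial_{y_i}^k u_n(t,y_i,\blank)$, and in particular $\psi_t(y_i)=u_n(t,y_i,\blank)$, lies in $\BSet$.

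The key step is to promote the $y_i^*$-pointwise, $\XSet$-valued differentiability of \cref{thm:AnalytLRNFAlt} to Fréchet differentiability of the $\BSet$-valued map $\psi_t$, with $\psi_t^{(k)}(y_i)=\partial_{y_i}^k u_n(t,y_i,\blank)$. I would argue by induction on $k$: assuming the claim at order $k$, the fundamental theorem of calculus in $\XSet$ applied to the $C^1$ map $y_i\mapsto\partial_{y_i}^k u_n(t,y_i,y_i^*)$ writes the increment of the $k$th derivative as $\int_0^h\partial_{y_i}^{k+1}u_n(t,y_i+s,y_i^*)\,ds$; dividing by $h$, subtracting $\partial_{y_i}^{k+1}u_n(t,y_i,y_i^*)$, and invoking the order-$(k{+}1)$ case of \cref{eq:unkBoundC0} — which, by the same weighted-norm conversion, provides a $\BSet$-valued modulus of continuity of $y_i\mapsto\partial_{y_i}^{k+1}u_n(t,y_i,\blank)$ that is uniform over $\Gamma_i^*$ — shows the difference quotient converges in $\BSet$ to $\partial_{y_i}^{k+1}u_n(t,y_i,\blank)$. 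As $\Gamma_i\subset\RSet$, the $k$th Fréchet derivative of $\psi_t$ is then the symmetric $k$-linear form $(\gamma_1,\ldots,\gamma_k)\mapsto\psi_t^{(k)}(y_0)\gamma_1\cdots\gamma_k$, which is \cref{eq:ukFrechet}, and $\|d^k\psi_t[y_0]\|_{BL(\Gamma_i^k,\BSet)}=\|\partial_{y_i}^k u_n(t,y_0,\blank)\|_{\BSet}$.

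Finally, for $0<\tau_i<1/\delta_i(t)$ the bound of the first paragraph evaluated at $y_i=y_0$ gives $\sum_{k\ge 0}\tfrac{1}{k!}\|d^k\psi_t[y_0]\|\,\tau_i^k\le C\sum_{k\ge 0}(\delta_i(t)\tau_i)^k<\infty$ with $C=\sigma_i(y_0)^{-1}\|D_i(t,\blank)\|_{C^0_\sigma(\Gamma)}$, while Taylor's theorem with integral remainder, using $\sup_{y_i'\in[y_0,y_i]}\|\partial_{y_i}^{k+1}u_n(t,y_i',\blank)\|_{\BSet}\le C'(k+1)!\,\delta_i(t)^{k+1}$ (finite $C'$, since $D_i(t,\blank)\in C^0_\sigma(\Gamma)$ and $\sigma_i$ is bounded below on the compact segment $[y_0,y_i]\subset\Gamma_i$), bounds the remainder by $C'(\delta_i(t)|y_i-y_0|)^{k+1}\to 0$ for $|y_i-y_0|\le\tau_i$; hence $\psi_t(y_i)=\sum_{k\ge 0}\tfrac{1}{k!}\partial_{y_i}^k u_n(t,y_0,\blank)(y_i-y_0)^k$ on $U_{\tau_i}$, the required convergent power-series representation, so $\psi_t$ is $\RSet$-analytic on $U_{\tau_i}$. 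The main obstacle is the middle paragraph: \cref{thm:AnalytLRNFAlt} controls only slice-wise, $\XSet$-valued derivatives, and the bookkeeping needed to turn these into genuine Fréchet derivatives of the $\BSet$-valued map $\psi_t$ — that is, to show the difference quotients converge uniformly over $\Gamma_i^*$ in the $\sigma_i^*$-weighted supremum norm — is the one delicate point, the remaining estimates being routine consequences of the Cauchy-type bound.
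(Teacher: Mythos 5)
Your proposal is correct in substance and arrives at the same power-series representation from the same source of information (the factorial--geometric bounds of \cref{thm:AnalytLRNFAlt}, converted from the $C^0_\sigma(\Gamma,\XSet)$ norm to the $C^0_{\sigma_i^*}(\Gamma_i^*,\XSet)$ norm by dividing out $\sigma_i(y_0)$, and a remainder estimate on the compact segment where $\sigma_i$ is bounded below), but it handles the one delicate step differently. You prove Fréchet differentiability of the $\BSet$-valued map $\psi_t$ by hand, inducting on the order and showing that difference quotients converge uniformly over $\Gamma_i^*$, and only then invoke Banach-valued Taylor's theorem with integral remainder. The paper sidesteps this entirely: it applies the scalar Taylor theorem to $y\mapsto u_n(t,y,y_i^*)(x)$ pointwise, transports the remainder bound into the $\BSet$-norm via the weight $\tilde\sigma_i(y_0,y)=\min_{s\in[y_0,y]}\sigma_i(s)$, defines the candidate multilinear maps $m_k$, verifies $\sup_k\tau_i^k\|m_k\|<\infty$, and then cites \cite[Proposition 4.3.4]{buffoni_analytic_2003}, which delivers analyticity, $C^\infty$ smoothness, \emph{and} the identification $d^k\psi_t[y_0]=k!\,m_k$ all at once from the convergent power series. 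The paper's route is shorter and avoids your "delicate point"; yours is more self-contained. One repairable slip in your middle paragraph: the order-$(k{+}1)$ case of \cref{eq:unkBoundC0} bounds the \emph{norm} of $\partial_{y_i}^{k+1}u_n(t,\blank,\blank)$, which gives a (Lipschitz) modulus of continuity for $\partial_{y_i}^{k}u_n$, not for $\partial_{y_i}^{k+1}u_n$; the uniform continuity you need for the difference quotient of the $k$th derivative comes from the order-$(k{+}2)$ bound (or from the continuity statement $\partial_{y_i}^{k+1}u_n(t,\blank)\in C^0_\sigma(\Gamma,\XSet)$ together with local uniformity). Since the bounds are available at every order, this does not break the argument, but the indexing should be corrected.
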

\begin{proof} 
Fix an arbitrary $y_0 \in \Gamma_i$, and denote by $\BSet$ the space
$C^0_{\sigma^*}(\Gamma^*_i,\XSet)$ over $\RSet$. In this proof we will adopt the
representations $(y,y_i^*), (y_0,y_i^*)$ for variables in $\Gamma_i \times
\Gamma_i^*$.
By \cref{thm:AnalytLRNFAlt}, the real-valued function $y \mapsto
u_n(t,y,y_i^*)(x)$ is in $C^\infty(\Gamma_i,\RSet)$ for any $x$, $t$, and $y_i^*$. By Taylor's theorem
for real-valued functions of a single variable, there exists a function $R_k$ such that
\[
  u_n(t,y,y_i^*)(x)-
  \sum_{j=0}^{k} \frac{1}{j!}
  \partial^j_{y_i} u_n(t,y_0,y_i^*)(x) (y-y_0)^j =
  R_{k}(y_0,y,t,y^*_i)(x),
\]
We will examine in a moment the asymptotic properties of $R_k$, which we put on a
side for now, and we rewrite the previous identity as
\begin{equation}\label{eq:psiExpansion}
  \psi_t(y)(y_i^*)
  -\sum_{j=0}^{k} \frac{1}{j!}
  \partial^j_{y_i} u_n(t,y_0,y_i^*)(y-y_0)^j =
  R_k(y_0,y,t,y^*_i).
\end{equation}
Crucially, the last identity does not make any statement about the
differentiability of $\psi_t$, nor does it express $d^k \psi_t$, the Fréchet
derivative, in terms of $\partial_{y_i}^k u_n$, the standard partial derivatives, albeit
the notation is suggestive of that. We now elucidate this connection, and prove the
analyticity of $\psi_t$. 

We introduce the symbol $m_0 (y-y_0)^0 = \psi_t(y_0)$, and the family of $k$-linear operators $\{ m_k \}$
defined by
\[
  m_k \colon 
  \Gamma_i^k \subset \RSet^k \to \BSet
  \qquad 
    (\gamma_1,\ldots,\gamma_k) \mapsto 
    \frac{1}{k!}\partial_{y_i}^{k} u_n(t,y_0,\blank) \gamma_1 \cdots \gamma_k,
    \qquad k  \geq 1.
\]
By \cref{thm:AnalytLRNFAlt} $\partial^k_{y_i} u_n(t,\blank,\blank)$ is in
$C^0_\sigma(\Gamma,\XSet)$, hence $\partial^k_{y_i} u_n(t,y_0,\blank)$ is in $\BSet$.
This, together with the fact that the product $\gamma_1 \cdots \gamma_k$
is in the reals, shows that effectively $m_k$ is on $\Gamma^k_i$ to $\BSet$.
 
The operator norm of $m_k$ is estimated as follows:
% The fact that $m_j$ is on $\BSet$ can be ascertained by estimated b
% These operators satisfy the following properties:
\begin{equation}\label{eq:FrechDerEstimate}
  \begin{aligned}
  \| m_k \| 
  & = \sup_{|\gamma_1|, \ldots, |\gamma_k|  \leq 1} 
    \frac{1}{k!}\| \partial_{y_i}^{k} u_n(t,y_0,\blank)  \gamma_1 \cdots \gamma_k \|_{\BSet}
  \leq 
    \frac{1}{k!}\| \partial_{y_i}^{k} u_n(t,y_0,\blank) \|_{\BSet} \\
  & \leq 
    \frac{1}{k! \sigma_i(y_0)}\| \partial_{y_i}^{k} u_n(t,\blank,\blank)
    \|_{C^0_\sigma(\Gamma,\XSet)}
    \leq 
    \frac{\delta_i(t)^k}{\sigma_i(y_0)}
    \| D_{i}(t,\blank) \|_{C^0_\sigma(\Gamma)} < \infty,
  \end{aligned}
\end{equation}
where we used the following inequality, valid for any 
$f \in C^0_{\sigma}(\Gamma,\XSet)$,
\[
  \| f \|_{C^0_{\sigma}(\Gamma,\XSet)}  \geq \sigma_i(s)  
  \| f(s,\blank) \|_{C^0_{\sigma^*_i}(\Gamma_i^*,\XSet)}
  =  \sigma_i(s)  \| f(s,\blank) \|_{\BSet},
  \qquad
  \quad s \in \Gamma_i,
\]
\begin{figure}
  \centering
  \includegraphics{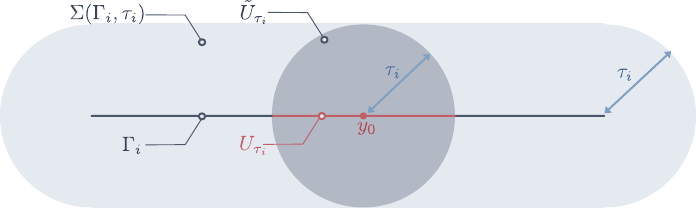}
  \caption{Sketch of the domains of analyticity of various functions used in the
    analycitity proof. The mapping
  $\psi_t$ of \cref{thm:r-analyticity} is defined on $\Gamma_i$ (grey horizontal line) and is $\RSet$-analytic on
  the interval $U_{\tau_i} \subset \Gamma_i$ of the real line (red). We use partial derivatives
$\partial^k_y u$ defined on $\Gamma_i \subset \RSet$, and the bounds of
\cref{thm:AnalytLRNFAlt} to define Fréchet derivatives in an
appropriate Banach space and obtain analyticity of $\psi_t$ on $U_{\tau_i}$. The function $\tilde
\psi_t$ in the proof of \cref{cor:Sigma} is defined on $\CSet$ and is
$\CSet$-analytic in $\tilde U_{\tau_i}$ (dark grey).
Without upgrading our definition of derivatives we prove that $\tilde \psi_t$ is the
analytic extension of $\psi_t$ on $\tilde U_{\tau_i}$. Finally, we prove the
existence of a function $\tilde u_n$ that extends $\psi_t$ analytically in the whole
$\Sigma(\Gamma_i,\tau_i)$ (light grey).}
  \label{fig:analyticitySketch}
\end{figure}
in conjunction with bound \cref{eq:unkBoundC0} of \cref{thm:AnalytLRNFAlt}. We
therefore know that $m_k$ is a symmetric operator in $\mathcal{M}^k(\Gamma_i,\BSet)$,
the set of $k$-linear operators on $\Gamma_i^k$ to $\BSet$ \cite[Proposition
4.1.2]{buffoni_analytic_2003} and by \cref{eq:psiExpansion} we have, for almost every $y\in\Gamma_i$,
\[
  \psi_t(y) -\sum_{j=0}^{k} m_j(y-y_0)^j = R_k(y_0,y,t,\blank)\in\BSet.
\]
We will now show that the sequence $\{ R_k \}$ converges in $\BSet$ as $k \to
\infty$. As an intermediate step, we bound $R_k$ in the $\XSet$-norm using the
Taylor's theorem bound for real-valued functions of a single variable, obtaining
\[
  \big\| R_k(y_0,y,t,y_i^*) \big\|_{\XSet} \leq 
  \frac{|y-y_0|^{k+1}}{(k+1)!} \sup_{s \in [y_0,y]} \| \partial_{y_i}^{k+1} u(t,s,y_i^*) \|_{\XSet}
\]
With the view of bounding $R_k$ in the $\BSet$-norm, we multiply each side of the
previous inequality by the strictly positive number $\tilde \sigma_i(y_0,y) = \{
  \min\sigma_i(s) \colon s \in [y_0,y] \}$, and obtain
\[
  \begin{aligned}
  \tilde \sigma_i(y_0,y) \big\| R_k(y_0,y,t,y_i^*) \big\|_{\XSet} 
  & \leq \frac{|y-y_0|^{k+1}}{(k+1)!} \tilde \sigma_i(y_0,y) 
  \sup_{s \in [y_0,y]} \| \partial_{y_i}^{k+1} u(t,s,y_i^*) \|_{\XSet} \\
  & \leq \frac{|y-y_0|^{k+1}}{(k+1)!} 
  \sup_{s \in [y_0,y]} \sigma_i(s)\| \partial_{y_i}^{k+1} u(t,s,y_i^*) \|_{\XSet},
  \end{aligned}
\]
hence, by \cref{thm:AnalytLRNFAlt},
\[
  \begin{aligned}
  \big\| R_k(y_0,y,t,\blank) \big\|_{\BSet} 
  & \leq 
    \frac{1}{\tilde \sigma_i(y_0,y)}\frac{|y-y_0|^{k+1}}{(k+1)!} \sup_{s' \in \Gamma_i^*}  \sigma_i^*(s') \sup_{s \in [y_0,y]} \sigma_i(s) 
    \| \partial_{y_i}^{k+1} u(t,s,s') \|_{\XSet} \\
  & \leq 
    \frac{1}{\tilde \sigma_i(y_0,y)}\frac{|y-y_0|^{k+1}}{(k+1)!}\sup_{s' \in \Gamma_i^*}  \sup_{s \in \Gamma_i}  \sigma_i^*(s')\sigma_i(s)     \| \partial_{y_i}^{k+1} u(t,s,s') \|_{\XSet} \\
  & \leq 
  \frac{\bigl( |y-y_0| \delta_i(t) \bigr)^{k+1}}{\tilde \sigma(y_0,y)} \| D_i(t,\blank)
  \|_{C^0_\sigma(\Gamma)}.
  \end{aligned}
\]
% \[
%   \frac{\big\| R_1(y_0,y,t,\blank) \big\|_{\BSet}}{|y_0 - y|} \to 0 
%   \qquad 
%   \text{as $y_0 \to y$,}
% \]
We can now draw a few conclusions: firstly, owing to the previous inequality, the
power series
\[
  \psi_t(y) = \sum_{j=0}^{\infty} m_j(y-y_0)^j
\]
converges for all $y$ in the interval $U_{\tau_i} = \{ |y-y_0|  \leq \tau_i \}
\subset \Gamma_i$, with $0 < \tau_i < 1/\delta_i(t)$. Secondly, we return to the
estimate \cref{eq:FrechDerEstimate}, which gives
 \[
   \sup_{k \geq 0 } \tau_i^k \| m_k \| < \infty, \qquad 0 < \tau_i <
   \delta_i(t).
 \]
 The hypotheses of \cite[Proposition 4.3.4]{buffoni_analytic_2003} are verified and
 we conclude that for any $0 < \tau < \delta_i(t)$: (i) the mapping $\psi_t$ is
 $\RSet$-analytic on $U_{\tau_i}$ \cite[Definition
 4.3.1]{buffoni_analytic_2003}; (ii) it holds $\psi_t \in C^\infty
 (U_{\tau_i},\BSet)$; and (iii) $m_k$ is the $k$th Fréchet derivative $d^k
 \psi_t[y_0]$ of $\psi_t$ at $y_0 \in U_{\tau_i}$. The proof is now complete.
\end{proof}

We have proved $\RSet$-analyticity of the $C^0_{\sigma_i}(\Gamma_i,\XSet)$-valued
solution $y_i \mapsto u_n(t,y_i,\blank)$ on an interval in $\Gamma_i \subset \RSet$.
To that effect, we transplanted bounds on partial derivatives of the real-valued
solution $\partial_{y_i}^k u_n$, to bounds on the Fréchet derivative of the mapping
above. We now extend this $\RSet$-analytic mapping to a $\CSet$-analytic mapping,
defined on an open set of the complex plane. Importantly, this extension can be
carried out without investigating Fréchet differentiability of Banach-space valued
functions of complex variables, on the field $\CSet$, for which we have thus far
proved no result.

\begin{corollary}[to \cref{thm:r-analyticity}; $\CSet$-analytic extension of the
  projmected solution with values in $\XSet$]\label{cor:Sigma}
Assume the hypothesis of \cref{thm:AnalytLRNFAlt}, fix $n,m \in \NSet$, $i\in
\mathbb{N}_m$ and $t \in J$. The solution of the projected
problem $u_n(t,y_i,y_i^*)$ as an $\RSet$-analytic function of $y_i$, 
$u_n \colon \Gamma_i \subseteq \RSet \to C^0_{\sigma^*_i}(\Gamma_i^*,\XSet)$
admits a $\CSet$-analytic extension $\tilde u_n \colon \CSet \to
C^0_{\sigma^*_i}(\Gamma_i^*,\XSet)$ in the region of the complex plane 
\[
\Sigma(\Gamma_i,\tau_i)=\{ z\in\mathbb{C} : \dist(\Gamma_i,z)\leq \tau_i\},
\]
for any $0<\tau_i<1/\delta_i(t)$.
\end{corollary}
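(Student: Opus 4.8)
The plan is to build $\tilde u_n$ by gluing together local complex power series generated by the derivative bounds of \cref{thm:r-analyticity}. Fix $n\in\NSet$, $t\in J$, $i\in\NSet_m$, and write $\BSet$ for the space $C^0_{\sigma^*_i}(\Gamma_i^*,\XSet)$ regarded over $\CSet$ (i.e.\ with $\XSet$ replaced by its complexification), so that $\CSet$-analyticity in the sense of \cite[Section 4.3]{buffoni_analytic_2003} makes sense, and set $\psi_t\colon\Gamma_i\to\BSet$, $y_i\mapsto u_n(t,y_i,\blank)$. For each $y_0\in\Gamma_i$, \cref{thm:r-analyticity} provides the Fréchet derivatives $m_k(y_0)=\partial_{y_i}^k u_n(t,y_0,\blank)/k!\in\BSet$ of $\psi_t$ at $y_0$, together with the estimate $\|m_k(y_0)\|\leq \delta_i(t)^k\,\|D_i(t,\blank)\|_{C^0_\sigma(\Gamma)}/\sigma_i(y_0)$ from \cref{eq:FrechDerEstimate}. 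First I would introduce the $\BSet$-valued power series
\[
  \tilde\psi_{t,y_0}(z):=\sum_{k=0}^\infty m_k(y_0)\,(z-y_0)^k,\qquad z\in\CSet,
\]
and note that the root test together with the above bound makes it converge absolutely, uniformly on compact subsets of the open disk $D(y_0,1/\delta_i(t))$; by the standard theory of vector-valued power series \cite[Section 4.3]{buffoni_analytic_2003}, $\tilde\psi_{t,y_0}$ is then $\CSet$-analytic on $D(y_0,1/\delta_i(t))$, and for real $y$ with $|y-y_0|\leq\tau_i<1/\delta_i(t)$ it reproduces the real power series of \cref{thm:r-analyticity}, so that $\tilde\psi_{t,y_0}(y)=\psi_t(y)=u_n(t,y,\blank)$.

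Next I would prove that these local extensions are pairwise compatible. Suppose $y_0,y_0'\in\Gamma_i$ and $D(y_0,1/\delta_i(t))\cap D(y_0',1/\delta_i(t))\neq\emptyset$; this intersection is convex, hence open and connected, and $|y_0-y_0'|<2/\delta_i(t)$. Since $\Gamma_i$ is an interval, the segment joining $y_0$ and $y_0'$ lies in $\Gamma_i$, and a full real neighbourhood of its midpoint $c$ lies in the intersection. On that neighbourhood both $\tilde\psi_{t,y_0}$ and $\tilde\psi_{t,y_0'}$ coincide with $\psi_t$, so their difference is a $\CSet$-analytic $\BSet$-valued function on a connected open set vanishing on a real arc; by the identity theorem for vector-valued analytic functions \cite[Section 4.3]{buffoni_analytic_2003} the two extensions agree on the whole intersection.

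Finally I would patch. Because $\Gamma_i$ is closed, every $z$ with $\dist(\Gamma_i,z)\leq\tau_i$ satisfies $|z-y_0^*|\leq\tau_i<1/\delta_i(t)$ for some $y_0^*\in\Gamma_i$, so $\Sigma(\Gamma_i,\tau_i)\subset\widetilde{\Sigma}:=\bigcup_{y_0\in\Gamma_i}D(y_0,1/\delta_i(t))$, and $\widetilde{\Sigma}$ is open. I would then set $\tilde u_n(z):=\tilde\psi_{t,y_0}(z)$ for any $y_0\in\Gamma_i$ with $z\in D(y_0,1/\delta_i(t))$; the compatibility step makes this unambiguous on $\widetilde{\Sigma}$, $\tilde u_n$ is $\CSet$-analytic there because it is locally a convergent power series, and choosing $y_0=z$ when $z=y\in\Gamma_i$ gives $\tilde u_n(y)=m_0(y)=u_n(t,y,\blank)$, so that $\tilde u_n$ extends $\psi_t$. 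Restricting to $\Sigma(\Gamma_i,\tau_i)\subset\widetilde{\Sigma}$ yields the claim for every $0<\tau_i<1/\delta_i(t)$.

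I expect the compatibility step to be the main obstacle: one must check that two overlapping disks centred on $\Gamma_i$ always share a genuine arc of $\Gamma_i$ — which is exactly where the fact that $\Gamma_i$ is an interval, and hence contains the segment between the two centres, enters — and then invoke the Banach-valued identity theorem in the correct form. Passing to the complexification of $\BSet$ is a minor bookkeeping point already implicit in the notion of $\CSet$-analyticity adopted from \cite{buffoni_analytic_2003}, and the rest is the routine theory of vector-valued power series, with the $y_0$-independent convergence radius $1/\delta_i(t)$ (which is precisely why the bounds of \cref{thm:AnalytLRNFAlt} were arranged to be homogeneous in $n$) doing the essential work.
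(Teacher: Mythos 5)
Your proposal is correct and follows essentially the same route as the paper's proof: for each $y_0\in\Gamma_i$ one forms the complex power series with coefficients $\partial_{y_i}^k u_n(t,y_0,\blank)/k!$, uses the bound from \cref{eq:FrechDerEstimate} to get a $y_0$-independent convergence radius $1/\delta_i(t)$ in the complexified codomain, identifies the sum with $\psi_t$ on the real interval, and covers $\Sigma(\Gamma_i,\tau_i)$ by such disks. The one place you go beyond the paper is the explicit compatibility check on overlapping disks via the Banach-valued identity theorem; the paper simply asserts that repeating the local construction for every $y_0$ yields a global extension, so your gluing argument fills in a step the paper leaves implicit.
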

\begin{proof} Pick $y_0 \in \Gamma_i$, and let $\BSet_{\CSet}$ be the Banach space
  $C^0_{\sigma^*_i}(\Gamma_i^*,\XSet)$ on the field $\CSet$. Introduce the mappings
\begin{equation}\label{eq:pseriesNew}
  \tilde \psi_k \colon \CSet \to \BSet_{\CSet}, \qquad 
  z \mapsto \sum_{j=0}^k \frac{1}{j!} 
  \partial_{y_i}^j u_n(t,y_0,\blank) (z-y_0)^j 
  \qquad k  \geq 1.
\end{equation}
In passing we note that $\tilde \psi_k$ defined above and $\psi_t$ in
\cref{thm:r-analyticity} have different domains (a subset of $\CSet$ and $\RSet$,
respectively) but also different codomains: the former has values in $\BSet_{\CSet}$,
and the latter in $\BSet$, that is $C^0_{\sigma^*_i}(\Gamma_i^*,\XSet)$ on the field
$\RSet$. 

Proceeding as in the proof of \cref{thm:r-analyticity}, and replacing the absolute value
$|\blank|_{\RSet}$ appearing in the $\XSet$ norms by the absolute value
$|\blank|_{\CSet}$, we bound $\| \partial_{y_i}^j u_n(t,y_0,\blank)
(z-y_0)^j/j! \|_{\BSet_{\CSet}}$, and find
\[
  \lim_{k \to \infty} \tilde \psi_k(z) \to \tilde \psi_t(z)
  \qquad \text{as $k \to \infty$}
  \qquad \text{for all $z \in \tilde U_{\tau_i} $}
\]
for some $\BSet_{\CSet}$-valued function $\tilde \psi_t$, where $\tilde U_{\tau_i} = \{ z \in  \CSet \colon | z-y_0| \leq \tau_i\}$, with $0 < \tau_i
< 1/\delta_i(t)$. 

For any $0 < \tau_i < \delta_i(t)$, \Cref{thm:r-analyticity} defines $\psi_t$, a
$\BSet$-valued function that is $\RSet$-analytic in $U_{\tau_i} \subset
\tilde U_{\tau_i}$. Above we proved the existence of $\tilde \psi_t$,
a $\BSet_{\CSet}$-valued function that is $\CSet$-analytic in $\tilde U_{\tau_i}$ and
such that
\[
  \tilde \psi_t(y) = \sum_{k =0}^\infty \frac{1}{k!} d^k \psi[y_0](y-y_0)^k=\psi_t(y)
  = u_n(t,y,\blank) \qquad \text{for all $y \in U_{\tau_i}$}.
\]
We therefore found an analytic extension to the solution $u_n$ in $\tilde
U_{\tau_i}$. The steps above can be repeated for any $y_0 \in \Gamma_i$, providing an
extension $\tilde u_n$ to $u_n$ in $\Sigma(\Gamma,\tau_i)$ for all $0 < \tau_i <
1/\delta_i(t)$.
\end{proof}

The results of \cref{thm:r-analyticity,cor:Sigma} concern analyticity of the
time-dependent function $u_n(t,\blank)$. \Cref{thm:AnalytLRNFAlt} also provides
\cref{eq:unkBoundC0J}, a bound valid for $t \in J= [0,T]$, and it is possible to
obtain analyticity of $u_n$ as a $C^0(J,\XSet)$-valued solution, as the following
result shows.
\begin{theoremE}[$\CSet$-analyticity of the semidiscrete solution with values in $C^0(J,\XSet)$]
  \label{thm:C-analyticity-C0}
  Assume the hypotheses of \cref{thm:AnalytLRNFAlt}, and fix $n \in \NSet$ and $i \in
  \NSet_m$. The solution of the projected problem $u_n(\blank,y_i,y_i^*)$ as an
  $\RSet$-analytic function of $y_i$, $u_n \colon \Gamma_i \subseteq \RSet \to
  C^0_{\sigma^*_i}(\Gamma_i^*,C^0(J,\XSet))$ admits a $\CSet$-analytic extension 
  $\tilde u_n \colon \CSet \to C^0_{\sigma^*_i}(\Gamma_i^*,C^0(J,\XSet))$ in the
  region of the complex plane
  \[
    \Sigma(\Gamma_i,\tau_i)=\{ z\in\mathbb{C} : \dist(\Gamma_i,z)\leq \tau_i\},
  \]
  for any $0<\tau_i<1/\| \delta_i \|_{C^0(J)}$.
\end{theoremE}
\begin{proofE}The proof is a straightforward amendment of the ones in
  \cref{thm:r-analyticity,cor:Sigma}. We amend the definition of $\BSet$ as $\BSet =
  C^0_{\sigma_i^*}(\Gamma_i^*,C^0(J,\XSet))$ over $\RSet$, and let
  \[
    \psi \colon \Gamma_i \subset \RSet \to \BSet, \qquad y_i \mapsto
    u_n(\blank,y_i,\blank).
  \]
  From \cref{eq:psiExpansion} we derive
  % \[
  % \psi(y)(y_i^*)
  % -\sum_{j=0}^{k} \frac{1}{j!}
  % \partial^j_{y_i} u_n(\blank,y_0,y_i^*)(y-y_0)^j =
  % R_k(y_0,y,\blank,y^*_i),
  % \]
\[
  \psi(y) -\sum_{j=0}^{k} m_j(y-y_0)^j = R_k(y_0,y,\blank,\blank)\in\BSet,
\]
with the updated definitions $m_0 (y-y_0)^0 = \psi(y_0)$, and
\[
  m_k \colon 
  % \Gamma_i^k \subset \RSet^k \to \BSet
  % \qquad 
    (\gamma_1,\ldots,\gamma_k) \mapsto 
    \frac{1}{k!}\partial_{y_i}^{k} u_n(\blank,y_0,\blank) \gamma_1 \cdots \gamma_k,
    \qquad k  \geq 1,
\]
whose norm is bounded using \cref{eq:unkBoundC0J} and proceeding as in
\eqref{eq:FrechDerEstimate}
\[
 \| m_k \| 
 % \leq \frac{\delta_i(t)^k}{\sigma_i(y_0)} \| D_{i}(t,\blank) \|_{C^0_\sigma(\Gamma)} 
 \leq 
  \frac{1}{k! \sigma_i(y_0)}\| \partial_{y_i}^{k} u_n \|_{C^0_\sigma(\Gamma,C^0(J,\XSet))}
 % \leq \frac{1}{k!}\| \partial_{y_i}^{k} u_n(\blank,y_0,\blank) \|_{\BSet} 
 \leq \frac{1}{\sigma_i(y_0)} 
 \| \delta_i \|^k_{C^0(J)}
 \| D_{i} \|_{C^0_\sigma(\Gamma,C^0(J))} 
 < \infty.
\]
The operator $m_k$ is symmetric and in $\mathcal{M}^k(\Gamma_i,\BSet)$,
\cite[Proposition 4.1.2]{buffoni_analytic_2003}. Proceeding as in the proof of
\cref{thm:r-analyticity}, we introduce $\tilde \sigma_i(y_0,y) = \{  \min\sigma_i(s)
\colon s \in [y_0,y] \}$ and estimate
  \[
    \begin{aligned}
      \big\| R_k(y_0,y,\blank,\blank) \big\|_{\BSet} 
      % & \leq 
      %   \frac{1}{\tilde \sigma_i(y_0,y)}\frac{|y-y_0|^{k+1}}{(k+1)!} \sup_{s' \in \Gamma_i^*}  \sigma_i^*(s') \sup_{s \in [y_0,y]} \sigma_i(s) 
      %   \| \partial_{y_i}^{k+1} u(\blank,s,s') \|_{C^0(J,\XSet)} \\
        & \leq 
        \frac{1}{\tilde \sigma_i(y_0,y)}\frac{|y-y_0|^{k+1}}{(k+1)!}
        \sup_{s' \in \Gamma_i^*}  \sup_{s \in \Gamma_i}  \sigma_i^*(s')\sigma_i(s) \|
        \partial_{y_i}^{k+1} u(\blank,s,s') \|_{C^0(J,\XSet)} \\
        & \leq 
        \frac{\bigl( |y-y_0| \, \| \delta_i \|_{C^0(J)} \bigr)^{k+1}}{\tilde \sigma(y_0,y)} 
        \| D_i \|_{C^0_\sigma(\Gamma,C^0(J))}
    \end{aligned}
  \]
  hence the power series $\psi(y) = \sum_{j=0}^{\infty} m_j(y-y_0)^j$ converges for
  all $y$ in the interval $U_{\tau_i} = \{ |y-y_0|  \leq \tau_i \}$ with $0 < \tau_i
< 1/\| \delta_i\|_{C^0(J)}$, the following bound holds
 \[
   \sup_{k \geq 0 } \tau_i^k \| m_k \| < \infty, \qquad 0 < \tau_i < 1/\| \delta_i\|_{C^0(J)},
 \]
 and by~\cite[Proposition 4.3.4]{buffoni_analytic_2003} 
 the mapping $\psi \colon \Gamma_i \to \BSet$ is $\RSet$-analytic on $U_{\tau_i}$ 
 \cite[Definition 4.3.1]{buffoni_analytic_2003} with
 \[
   d^k \psi_t[y_0] \colon \Gamma_i^k \subset \RSet^k \to
   C^0_{\sigma_i^*}(\Gamma_i^*,C^0(J,\XSet))
   \qquad (\gamma_1,\ldots,\gamma_k) \mapsto 
   \partial_{y_i}^{k} u_n(\blank,y_0,\blank) \gamma_1 \cdots \gamma_k.
 \] 
 The existence of an analytic extension $\tilde u_n$ to $u_n$ follows steps
 identical to the ones in \cref{cor:Sigma}, upon setting $\BSet_C =
 C^0_{\sigma^*_{i}}(\Gamma^*_i,C^0(J,\XSet))$ on the field $\CSet$, amending the
 sequence $\{ \tilde \psi_k \}$ 
\begin{equation}
  \tilde \psi_k \colon \CSet \to \BSet_{\CSet}, \qquad 
  z \mapsto \sum_{j=0}^k \frac{1}{j!} 
  \partial_{y_i}^j u_n(\blank,y_0,\blank) (z-y_0)^j 
  \qquad k  \geq 1,
\end{equation}
and replacing $\delta_i(t)$ by $\| \delta_i \|_{C^0(J)}$.
\end{proofE}

\subsection{Error estimate for the spatial projection, stochastic collocation
scheme}\label{ssec:totalError} 

We are ready to give an a priori estimate for the stochastic collocation scheme's
total error. We use results in \cite{babuskaStochasticCollocationMethod2007} for PDEs with random data, as the
estimates for the interpolation in the $y$ direction do not depend on the particular
problem but only rely on analyticity of the data and solution, and we combine it with
the theory developed in \cite{avitabileProjectionMethodsNeural2023} for neural fields. 
\begin{theorem}[Total error bound]\label{thm:totalBound}
  Assume the hypotheses of \cref{thm:AnalytLRNFAlt}, and \cref{hyp:subgaussian}
  (sub-Gaussianity). There exist positive constants $C_\textnormal{sc}, r_1, \ldots, r_m$ independent
  of $n$ and $q$, such that, for all $t \in J$ it holds 
  \begin{equation}\label{eq:totalErrorBound}
    \begin{aligned}
    \|u(t,\blank)-u_{n,q}(t,\blank)\|_{L^2_\rho(\Gamma)\otimes \XSet}
    & \leq 
    \|u-u_{n,q}\|_{L^2_\rho(\Gamma)\otimes C^0(J,\XSet)} \\
    & \leq 
    e^{\bar K_w T} \|u-P_n u\|_{L^2_\rho(\Gamma)\otimes C^0(J, \XSet)} +
    C_\textnormal{sc}\sum_{i=1}^m \beta_i(q_i)\, e^{-r_i q_i^{\theta_i}},
    \end{aligned}
  \end{equation}
  where, if $\Gamma_i$ is bounded
  \[
   r_i = \log \left[ \frac{2\tau_i}{|\Gamma|} 
       \left( 1+\sqrt{1+ \frac{|\Gamma_i|^2}{4\tau_i^2}}\right) \right],
   \qquad
	 \theta_i=\beta_i=1,
  \]
  and if $\Gamma_i$ is unbounded
  \[
     r_i =\tau_i \zeta_i,
     \qquad \theta_i=\frac{1}{2}, 
     \qquad \beta_i \in O(\sqrt{q_i}) \text{ as $ q_i \to \infty $},
  \]
  $\bar K_w$ being defined in \cref{hyp:analyticityLRNF_Pn}, 
  $\tau_i$ in \cref{thm:C-analyticity-C0}, and
  $\zeta_i$ in \cref{hyp:subgaussian}.
\end{theorem}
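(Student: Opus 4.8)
The plan is to split $u - u_{n,q} = (u - u_n) + (u_n - u_{n,q})$ and treat the two contributions separately: the spatial projection error $u - u_n$ via the deterministic estimate of \cref{thm:determErrorBound}, and the stochastic collocation error $u_n - u_{n,q}$ by feeding the $\CSet$-analyticity of $u_n$ from \cref{thm:C-analyticity-C0} into the tensor-product interpolation error analysis of \cite[Section 4]{babuskaStochasticCollocationMethod2007}. The leftmost inequality in \cref{eq:totalErrorBound} is immediate: for any $f \in L^2_\rho(\Gamma)\otimes C^0(J,\XSet) \cong L^2_\rho(\Gamma,C^0(J,\XSet))$ and any $t \in J$ one has $\| f(t,\blank)\|_{L^2_\rho(\Gamma)\otimes\XSet} = \big(\int_\Gamma \| f(t,y)\|_\XSet^2 \,\rho(y)\,dy\big)^{1/2} \leq \big(\int_\Gamma \| f(\blank,y)\|_{C^0(J,\XSet)}^2 \,\rho(y)\,dy\big)^{1/2} = \| f\|_{L^2_\rho(\Gamma)\otimes C^0(J,\XSet)}$, applied with $f = u - u_{n,q}$. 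All three functions are genuine elements of $L^2_\rho(\Gamma,C^0(J,\XSet))$: $u$ and $u_n$ lie in $C^0_\sigma(\Gamma,C^1(J,\XSet))$ by \cref{lemma:C0SigmaRegularity}, which embeds continuously into $L^2_\rho(\Gamma,C^0(J,\XSet))$ under \cref{hyp:subgaussian}, and the same sub-Gaussianity guarantees that the polynomials making up $u_{n,q} = I_q u_n$ are square-integrable against $\rho$.

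For the spatial term, \cref{thm:determErrorBound} gives, for $\rho\,dy$-almost every $y$, the pointwise bound $\| u(\blank,y) - u_n(\blank,y)\|_{C^0(J,\XSet)} \leq e^{T\| P_n W(y_w)\|_{BL(\XSet)}}\,\| u(\blank,y) - P_n u(\blank,y)\|_{C^0(J,\XSet)}$. The exponent is bounded uniformly in $n$ and $y_w$: since $P_n W(y_w) = H(P_n w(\blank,\blank,y_w))$, \cite[Lemma 4.2]{avitabile2024NeuralFields} gives $\| P_n W(y_w)\|_{BL(\XSet)} \leq \| P_n w(\blank,\blank,y_w)\|_\WSet$, and the case $k = 0$ of \cref{hyp:analyticityLRNF_Pn} bounds the latter by $\bar K_w$. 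Hence $e^{T\| P_n W(y_w)\|} \leq e^{\bar K_w T}$; squaring the pointwise estimate, multiplying by $\rho(y)$, integrating over $\Gamma$, and taking a square root produces $\| u - u_n\|_{L^2_\rho(\Gamma)\otimes C^0(J,\XSet)} \leq e^{\bar K_w T}\,\| u - P_n u\|_{L^2_\rho(\Gamma)\otimes C^0(J,\XSet)}$, the first summand on the right of \cref{eq:totalErrorBound}.

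For the stochastic collocation term, write $u_{n,q} = I_q u_n = \bigotimes_{i\in\NSet_m} I_{q_i} u_n$, with $I_{q_i}$ the one-dimensional Lagrange interpolation in the $i$th coordinate, and use the telescoping identity $\mathrm{Id} - \bigotimes_{i} I_{q_i} = \sum_{i\in\NSet_m} \big(\bigotimes_{j<i} I_{q_j}\big)\circ(\mathrm{Id} - I_{q_i})$, which reduces matters to estimating, for each $i$, the one-dimensional error $\|(\mathrm{Id} - I_{q_i}) u_n\|$ measured in the $L^2_{\rho_i}(\Gamma_i, \BSet_i)$ norm, $\BSet_i := C^0_{\sigma_i^*}(\Gamma_i^*, C^0(J,\XSet))$, together with the $L^2_{\rho_j}(\Gamma_j)$ operator norms of the remaining $I_{q_j}$. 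By \cref{thm:C-analyticity-C0}, $u_n$ extends $\CSet$-analytically in $y_i$ to the region $\Sigma(\Gamma_i,\tau_i)$ for any $0 < \tau_i < 1/\|\delta_i\|_{C^0(J)}$, and the $\sup$-norm of that extension is controlled by $\|D_i\|_{C^0_\sigma(\Gamma,C^0(J))}$; crucially, both $\tau_i$ and this size are independent of $n$, since by \cref{thm:AnalytLRNFAlt} they depend only on $\bar K_w$, $K_v$, $K_g$, and the $\gamma_i$ supplied by \cref{hyp:analyticityLRNF_Pn}. Feeding this into the one-dimensional interpolation estimates of \cite[Section 4]{babuskaStochasticCollocationMethod2007} --- which cover functions analytic on a Bernstein region when $\Gamma_i$ is bounded, and on a strip under the sub-Gaussian weight (\cref{hyp:subgaussian}) when $\Gamma_i$ is unbounded, and whose proofs carry over to Banach-space-valued functions exactly as in \cite[Section 4.1]{Zhang.2012} --- one obtains, after summing over $i$, a bound of the form $C_\textnormal{sc}\sum_{i=1}^m \beta_i(q_i)\,e^{-r_i q_i^{\theta_i}}$ with $r_i,\theta_i,\beta_i$ as in the statement; the constant $C_\textnormal{sc}$ is independent of $q$ (the full $q$-dependence being carried by the explicit factors) and independent of $n$ (because the analyticity radii and extension sizes it depends on are). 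Adding the spatial and stochastic bounds and using the triangle inequality completes the argument.

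The main obstacle is the stochastic collocation term, and specifically the $n$-uniformity of the constants: one must adapt the Babu\v ska, Nobile and Tempone one-dimensional analyticity-to-interpolation estimates to the weighted, Banach-space-valued setting, handle the $m$-dimensional telescoping, and --- the genuinely delicate point --- verify that neither the analyticity radius $\tau_i$ nor the size of the $\CSet$-analytic extension of $u_n$ on $\Sigma(\Gamma_i,\tau_i)$ deteriorates as $n \to \infty$. That uniformity is exactly what \cref{hyp:analyticityLRNF_Pn} is designed to deliver, propagating through \cref{thm:AnalytLRNFAlt} to \cref{thm:C-analyticity-C0}; it is also the origin of the $n$-independent factor $e^{\bar K_w T}$ in the spatial term.
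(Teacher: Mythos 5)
Your proposal is correct and follows essentially the same route as the paper's proof: the same splitting of the first inequality via the pointwise-in-$t$ bound, the same triangle-inequality decomposition into $u-u_n$ and $u_n-u_{n,q}$, the same use of \cref{thm:determErrorBound} together with the $k=0$ case of \cref{hyp:analyticityLRNF_Pn} to get the $n$-uniform factor $e^{\bar K_w T}$, and the same reliance on the analyticity of \cref{thm:C-analyticity-C0} fed into the Babu\v ska--Nobile--Tempone interpolation machinery for the stochastic term. If anything, you spell out (telescoping over directions, Banach-valued one-dimensional estimates, $n$-uniformity of $\tau_i$ and of the extension's size) the very steps the paper deliberately leaves implicit by deferring to \cite[Theorem 4.1]{babuskaStochasticCollocationMethod2007}.
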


\begin{proof} 
  % To prove that the $\| \blank \|_{L^2_\rho(\Gamma) \otimes \XSet}$ norm of the error
  % is bounded by its $\|  \blank \|_{L^2_\rho(\Gamma,C^0(J,\XSet))}$ norm it is useful to note
  % that, 
  To prove the first inequality we note that, if $f$ is a function in
  $L^2_\rho(\Gamma,C^0(J,\XSet))$ then for all $t \in J$ it holds
  \[
    \| f(t,\blank) \|^2_{L^2_\rho(\Gamma) \otimes \XSet} 
    =  \int_{\Gamma}\| f(t,y) \|_{\XSet}^2 \, \rho(y)\,dy
    \leq 
    \int_{\Gamma} \| f(\blank,y) \|_{C^0(J,\XSet)}^2 \, \rho(y)\,dy
    = \| f \|_{L^2_\rho(\Gamma) \otimes C^0(J,\XSet)}^2,
  \]
  hence we proceed to verify that $u, u_{nq} \in L^2_\rho(\Gamma,C^0(J,\XSet))$ and
  infer
  \begin{equation}\label{eq:unqHomTBound}
    \|u(t,\blank)-u_{n,q}(t,\blank)\|_{L^2_\rho(\Gamma)\otimes \XSet}
    \leq 
    \|u-u_{n,q}\|_{L^2_\rho(\Gamma)\otimes C^0(J,\XSet)} \qquad t \in J.
  \end{equation}
  Since the hypotheses of \cite[Corollary 6.2 with $p =2$]{avitabile2024NeuralFields}
  hold we have $u, u_n \in L^2_\rho(\Gamma,C^0(J,\XSet))$. Further, by
  \cref{lem:ProjAnalyticity} $u_n \in C^0_\sigma(\Gamma,C^0(J,\XSet))$ and 
  the sub-Gaussianity hypothesis (\cref{hyp:subgaussian}) guarantees that the
  embedding $C^0_\sigma(\Gamma,C^0(J,\XSet)) \subset
  L^2_\rho(\Gamma,C^0(J,\XSet))$ is continuous (see \cite[page 1019, discussion preceeding Lemma
  4.2]{babuskaStochasticCollocationMethod2007}). We recall that $u_{n,q}(x,t,y) =
  (I_q u_n(x,t,\blank)(y)$ by definition \cref{eq:unqDef}, thus we interpret
  $I_q$ as an operator on $C^0_\sigma(\Gamma,C^0(J,\XSet))$ to
  $L^2_\rho(\Gamma,C^0(J,\XSet))$ and by \cite[Lemma 4.3 and its proof taking $V =
  C^0(J,\XSet)$]{babuskaStochasticCollocationMethod2007} there exists a constant
  $C_q$ such that
  \[
    \| u_{n,q} \|_{L^2_\rho(\Gamma,C^0(J,\XSet))} =
    \| I_q u_n \|_{L^2_\rho(\Gamma,C^0(J,\XSet))} \leq C_q \| u_n
    \|_{C^0_{\sigma}(\Gamma,C^0(J,\XSet))}
    < \infty.
  \]
  We have thus verified $u, u_{n,q} \in L^2_\rho(\Gamma,C^0(J,\XSet))$, hence
  \eqref{eq:unqHomTBound} holds. By the triangle inequality
  \[
    \|u-u_{n,q}\|_{L^2_\rho(\Gamma)\otimes C^0(J,\XSet)} 
    \leq 
    \|u-u_{n}\|_{L^2_\rho(\Gamma)\otimes C^0(J,\XSet)} 
    +
    \|u_{n}-u_{n,q}\|_{L^2_\rho(\Gamma)\otimes C^0(J,\XSet)} 
    =: I + II.
  \]

  Using \cref{thm:determErrorBound} and \cref{hyp:analyticityLRNF_Pn} for $k =0$ we
  estimate the spatial discretisation error
  \[
    \begin{aligned}
     I^2 
     & = 
     \int_{\Gamma}\| u(\blank,y) - u_{n}(\blank,y)\|_{C^0(J,\XSet)}^2 \rho(y)\,dy \\
     & \leq  
     \int_\Gamma \bigl(e^{T \| P_n W(y) \|}\bigr)^2 \|u(t,y)-P_n u (t,y)\|^2_\XSet\, \rho(y) \,dy 
     \leq 
      \bigl(e^{T \bar K_w}\bigr)^2 \| u - P_n u
      \|^2_{L^2_\rho(\Gamma,C^0(J,\XSet))},
    \end{aligned}
  \]
  which gives the first term on the right-hand side of \cref{eq:totalErrorBound}. For
  the stochastic collocation error the following estimate holds
\[
  II
  % \|u_n-u_{n,q}\|_{L^2_\rho(\Gamma)\otimes C^0(J,\XSet)}
   \leq 
     C_\textrm{sc}\sum_{i=1}^m \beta_i(q_i)\, e^{-r_i q_i^{\theta_i}}.
\]
with constants given in the theorem statement. This estimate 
follows identical steps to the proof of Theorem 4.1 in \cite[page
1024]{babuskaStochasticCollocationMethod2007}, except that: (i) it is carried out for
$u_n$, not $u$; (ii) it involves functions with values in the Banach space $V =
C^0(J,\XSet)$, and (iii) it uses the analyticity result in
\cref{thm:C-analyticity-C0}.
We omit the
details of the proof of this bound for brevity, and we stress that, with respect to
\cite{babuskaStochasticCollocationMethod2007}, we have different radii of
analyticity $\tau_i$, and we treat simultaneously the Banach spaces $\XSet \in
\{C(D), L^2(D)\}$, but the proof steps are unchanged.
\end{proof}

The bound in \cref{thm:totalBound} is useful also as an upper bound on the first and
second moments of the error, as we can use the upcoming \cref{lem:momentsError} with
$v = u$ and $z = u_{n,q}$. \Cref{lem:momentsError} is an adaptation of \cite[Lemmas
4.7 and 4.8]{babuskaStochasticCollocationMethod2007} to the problem under
consideration.

\begin{lemmaE}[Errors for first and second moments] \label{lem:momentsError}
  If $v,z \in L^2_\rho(\Gamma,\XSet)$, then
  \begin{align}
    & \bigl\| \mean_\rho (v-z) \bigr\|_{\XSet}
      \leq \|  v-z \|_{L^2_\rho(\Gamma) \otimes \XSet}, 
    && \label{eq:vXBound}\\
    & \| \mean_\rho (v^2-z^2) \|_{\XSet}
       \leq \| v + z \|_{L^2_\rho(\Gamma) \otimes \XSet}
           \| v - z \|_{L^2_\rho(\Gamma) \otimes \XSet}, 
    && \XSet = C(D)
     \label{eq:vzCDBound}\\
    & \| \mean_\rho (v^2-z^2) \|_{L^1(D)}
       \leq \| v + z \|_{L^2_\rho(\Gamma) \otimes \XSet}
           \| v - z \|_{L^2_\rho(\Gamma) \otimes \XSet}, 
    && \XSet = L^2(D),
     \label{eq:vzL2Bound}
  \end{align}
  whereas, if $v,z \in L^2_\rho(\Gamma,C^0(J,\XSet))$, then
  \begin{align}
    & \bigl\| \mean_\rho (v-z) \bigr\|_{C^0(J,\XSet)}
      \leq \|  v-z \|_{L^2_\rho(\Gamma) \otimes C^0(J,\XSet)}, 
    && \label{eq:vC0Bound}\\
    & \| \mean_\rho (v^2-z^2) \|_{C^0(J,\XSet)}
       \leq \| v + z \|_{L^2_\rho(\Gamma) \otimes C^0(J,\XSet)}
           \| v - z \|_{L^2_\rho(\Gamma) \otimes C^0(J,\XSet)}, 
    && \XSet = C(D)
     \label{eq:vzC0CDBound}\\
    & \| \mean_\rho (v^2-z^2) \|_{C^0(J,L^1(D))}
       \leq \| v + z \|_{L^2_\rho(\Gamma) \otimes C^0(J,\XSet)}
           \| v - z \|_{L^2_\rho(\Gamma) \otimes C^0(J,\XSet)}, 
    && \XSet = L^2(D),
     \label{eq:vzC0L2Bound}
  \end{align}
\end{lemmaE}
\begin{proofE}  If $\XSet = C(D)$, then \cref{eq:vXBound} is derived using H\"older's
  inequality on $f = v -z$, as follows
  \[
    \begin{aligned}
    \| \mean_\rho f \|_{\XSet} 
    &
    =  \sup_{x \in D} \biggl|\int_{\Gamma} f(x,y)  \rho(y) \, dy \biggr|
      \leq   \sup_{x \in D} \int_{\Gamma} |f(x,y)|  \rho(y) \, dy \\
    & \leq   \int_{\Gamma} \|f(\blank,y)\|_{\XSet} \, 1_\Gamma(y) \rho(y) \, dy  
    \leq \biggl[ \int_{\Gamma} \|f(\blank,y)\|^2_{\XSet}  \rho(y) \, dy \biggr]^{1/2} 
           \biggl[ \int_{\Gamma} 1_\Gamma(y)  \rho(y) \, dy \biggr]^{1/2} \\
    & = \| f \|_{L^2_\rho(\Gamma,\XSet)},
    \end{aligned}
  \]
  whereas for $\XSet = L^2(D)$ the bound \cref{eq:vXBound} is found via the estimate (see also \cite[page 1934]{Zhang.2012}),
  \[
    \begin{aligned}
    \| \mean_\rho f \|^2_{\XSet} 
    & = 
        \int_{D} \biggl| \int_{\Gamma} f(x,y) \rho(y)\,dy \biggr|^2 \,dx
      = \int_{D} \bigl\langle f(x,\blank), 1_\Gamma \bigr\rangle^2_\rho \,dx \\
     & 
      \leq 
      \biggl[ \int_{D} \| f(x,\blank) \|^2_{L^2(\Gamma)} \rho(y)\,dy \biggr] 
      \biggr[ \int_{D} \rho(y) \,dy \biggl]
    % & 
      =
      \int_{\Gamma} \int_{D} | f(x,y) |^2 \rho(y) \,dx \,dy  =
      \| f \|^2_{L^2_\rho(\Gamma) \otimes \XSet}.
    \end{aligned}
  \]

  To prove \cref{eq:vzCDBound} we set $\XSet \in C(D)$ and derive
  \[
    \begin{aligned}
    \| E_\rho(v^2 -z^2) \|_{\XSet} 
    &= 
      \sup_{x \in  D} \int_{\Gamma}| v(x,y)+ z(x,y)| \, |v(x,y)-z(x,y)| \rho(y)\,dy \\
    & \leq 
       \int_{\Gamma}\| v(\blank,y)  + z(\blank,y)\|_{\XSet} 
                   \, \|v(\blank,y) - z(\blank,y)\|_{\XSet} \rho(y)\,dy \\
    & \leq 
    \biggl[
      \int_{\Gamma}\| v(\blank,y)  + z(\blank,y)\|_{\XSet} \rho(y)\,dy 
    \biggr]^{1/2}
    \biggl[
      \int_{\Gamma}\| v(\blank,y)  - z(\blank,y)\|_{\XSet} \rho(y)\,dy 
    \biggr]^{1/2} \\
    & \leq 
    \| v + z \|_{L^2_\rho(\Gamma) \otimes \XSet}
    \| v - z \|_{L^2_\rho(\Gamma) \otimes \XSet}, 
    \end{aligned}
  \]
  whereas the bound \cref{eq:vzL2Bound}  is proved in \cite[Lemma
  4.8]{babuskaStochasticCollocationMethod2007}. 
  To prove bounds in the $\|  \blank \|_{C^0(J,\XSet)}$ norm it is useful to note
  that, if $f \in L^2_\rho(\Gamma,C^0(J,\XSet))$, then for all $t \in J$ it holds
  \[
    \| f(t,\blank) \|^2_{L^2_\rho(\Gamma) \otimes \XSet} 
    =  \int_{\Gamma}\| f(t,y) \|_{\XSet}^2 \, \rho(y)\,dy
    \leq 
    \int_{\Gamma} \| f(\blank,y) \|_{C^0(J,\XSet)}^2 \, \rho(y)\,dy
    = \| f \|_{L^2_\rho(\Gamma) \otimes C^0(J,\XSet)}^2.
  \]
  Combining this bound with \cref{eq:vXBound} we derive \cref{eq:vC0Bound} jointly
  for $\XSet \in \{ C(D), L^2(D) \}$,
  \[
    \| \mean_\rho v \|_{C^0(J,\XSet)} 
    = \sup_{t \in J} \| \mean_{\rho} v(t,\blank) \|_{\XSet}
      \leq \sup_{t \in J} \| v(t,\blank) \|_{L^2_\rho(\Gamma) \otimes \XSet}
      \leq \| v \|_{L^2_\rho(\Gamma) \otimes C^0(J,\XSet)}.
  \]
  \Cref{eq:vzC0CDBound} is derived using \cref{eq:vzCDBound} and the property
  derived above for $f \in L^2_\rho(\Gamma,C^0(J,\XSet))$,
  \[
    \begin{aligned}
    \| E_\rho(v^2 -z^2) \|_{C^0(J,\XSet)} 
    & \leq 
      \sup_{t \in J} \bigl\| E_\rho \bigl[ v^2(t,\blank) 
                                    - z^2(t,\blank) \bigr] \bigr\|_{\XSet} \\
    &
      \leq 
      \sup_{t \in J}
      \biggl[
      \| v(t,\blank) + z(t,\blank) \|_{L^2_\rho(\Gamma) \otimes \XSet}
      \,
      \| v(t,\blank) - z(t,\blank) \|_{L^2_\rho(\Gamma) \otimes \XSet}
      \biggr] \\
    &
      \leq 
      \| v + z \|_{L^2_\rho(\Gamma) \otimes C^0(J,\XSet)}
      \,
      \| v - z \|_{L^2_\rho(\Gamma) \otimes C^0(J,\XSet)},
    \end{aligned}
  \]
  and similarly \cref{eq:vzC0L2Bound} is a consequence of  \cref{eq:vzL2Bound}
  \[
    \begin{aligned}
    \| E_\rho(v^2 -z^2) \|_{C^0(J,L^1(D))} 
    & \leq 
      \sup_{t \in J} \bigl\| E_\rho \bigl[ v^2(t,\blank) 
                                    - z^2(t,\blank) \bigr] \bigr\|_{L^1(D)} \\
    &
      \leq 
      \sup_{t \in J}
      \biggl[
      \| v(t,\blank) + z(t,\blank) \|_{L^2_\rho(\Gamma) \otimes \XSet}
      \,
      \| v(t,\blank) - z(t,\blank) \|_{L^2_\rho(\Gamma) \otimes \XSet}
      \biggr] \\
    &
      \leq 
      \| v + z \|_{L^2_\rho(\Gamma) \otimes C^0(J,\XSet)}
      \,
      \| v - z \|_{L^2_\rho(\Gamma) \otimes C^0(J,\XSet)},
    \end{aligned}
  \]
\end{proofE}

\subsection{An example of estimating the total error bound using
\cref{thm:totalBound}}\label{ssec:exampleTotalBound} 
We conclude this section by showing how to use \cref{thm:totalBound} in a concrete
example.
\begin{example}[\Cref{ex:FEMColl}, continued]\label{ex:totalErrorBound}
  Consider a RLNF with random data
\begin{equation}\label{eq:vAnaExampleCont}
  v(x,\omega_v) = b_0(x) + \sum_{j \in \NSet_m}  b_j(x) Y_j(\omega_g),
\quad Y_i \stackrel{i.i.d}{\sim}\mathcal{U}[-\alpha,\alpha], 
\quad i \in \NSet^m, 
\quad x \in D = [-1,1],
\end{equation}
\end{example}
in which the functions $\{ b_j \}_j$ as well as the other, \textit{deterministic}, neural field
data $w$ and $g$ are chosen so that, for any fixed $y \in \Gamma_v = [-\alpha,\alpha]$ it holds
$u(\blank,\blank,y) \in C^0(J,C^2(D))$. Further assume the problem is discretise
using a Finite-Element Collocation scheme, as in \cref{ex:FEMColl}. The projections
$P_n$ of this scheme is such that~\cite[Equation 2.3.48]{atkinson1997}
\[
  \| z - P_n z \|_{\infty } 
  \leq \frac{|D|^2}{8n^2} \| v'' \|_{\infty }
  = \frac{1}{2n^2} \| v'' \|_{\infty }, 
  \qquad v \in C^2(D).
\]

We now reason as follows: by \cite[Corollaries 5.1 and 6.2]{avitabile2024NeuralFields} $u \in
L^2_\rho(\Gamma_v, C^0(J,\XSet) )$ and $u_n \in L^2_\rho(\Gamma_v, C^0(J,\XSet_n) )$,
respectively, with $\rho(y) \equiv (2\alpha)^{-1}$, $\XSet = \bigl(C^0(J,D), \| \blank
\|_{\infty }\bigr)$, and $\XSet_n$ as in \cref{ex:FEMColl}.
We now estimate
\[
  \begin{aligned}
  \| u - P_n u \|^2_{L^2_\rho(\Gamma) \otimes C^0(J,\XSet)} 
  & = \int_{\Gamma} \biggl[
      \max_{(x,t) \in J \times D} \bigl| u(x,t,y) - (P_n u (\blank,t,y))(x) \bigr|
                  \biggr]^2
      \rho(y)\,dy \\
  &\leq \biggl(\frac{1}{2n^2}\biggr)^2 \int_{\Gamma} \Biggl[
      \max_{t\in J} \biggl\|  \frac{\partial^2 u}{\partial x^2}(\blank,t,y) \biggr\|
                  \Biggr]^2
      \rho(y)\,dy \\
  & =
    \biggl(\frac{1}{2n^2}\biggr)^2 
    \biggl\| \frac{\partial^2 u}{\partial x^2} \biggr\|^2_{L^2_\rho(\Gamma,C^0(J,\XSet))}
  \leq 
    \biggl(\frac{1}{2n^2}\biggr)^2 
    \biggl\| \frac{\partial^2 u}{\partial x^2} \biggr\|^2_{L^2_\rho(\Gamma,C^0(J,C^2(D)))}
  \end{aligned}
 \]
 We combine the estimate above with \cref{eq:vC0Bound} in \cref{lem:momentsError} and
 \cref{eq:totalErrorBound} in \cref{thm:totalBound}, use the fact that $\Gamma_v$ is
 bounded (hence $q_i = \theta_i = 1$), and that the kernel $w$ is deterministic
 (hence we can take $\bar K_w = \| w \|_{\WSet}$)
 to arrive at
 \begin{equation} \label{eq:exampleErrorSplit}
   \bigl\| \mean_\rho (u-u_{nq}) \bigr\|_{C^0(J,\XSet)}
      % \leq \|  v-z \|_{L^2_\rho(\Gamma) \otimes C^0(J,\XSet)}, 
    % \leq  e^{\bar K_w T}  
    % \biggl\| \frac{\partial^2 u}{\partial x^2} \biggr\|^2_{L^2_\rho(\Gamma,C^0(J,C^2(D)))}
    % \frac{1}{2n^2} + 
    % C \sum_{i=1}^m \beta_i(q_i)\, e^{-r_i q_i^{\theta_i}},
   \leq 
    C_1 n^{-2} +
    C_2 \sum_{i=1}^m e^{-r_i q_i},
    \qquad C_1 = \frac{e^{T \| w \|_{\WSet}}}{2} \biggl\| \frac{\partial^2 u}{\partial x^2} \biggr\|^2_{L^2_\rho(\Gamma,C^0(J,C^2(D)))}
 \end{equation}
 and hence the error decays as an $O(n^{-2})$ as $n \to \infty $, and exponentially
 as $q_i \to \infty$, for any $i \in \NSet_m$.
  % \begin{align}
  %   & \bigl\| \mean_\rho (v-z) \bigr\|_{C^0(J,\XSet)}
  %     \leq \|  v-z \|_{L^2_\rho(\Gamma) \otimes C^0(J,\XSet)}, 
  %   && \label{eq:vC0Bound}\\

\section{Considerations on nonlinear neural fields}\label{sec:nonlinear} 
\Cref{sec:linearAnalysis} discusses analyticity of solutions to the parametrised
problem $u_n'(t,y) = P_n N(t,u_n(t,y),y)$ with initial condition $u_n(0,y) = P_n v(y)$
when $\LSet = 1$ (linear case). Under the boundedness assumption on the projectors
$P_n$, \cref{hyp:analyticityLRNF} ensures that the mapping $ y_i \mapsto
N(\blank,\blank,(y_i, y_i^*))$ is analytic, and \cref{thm:AnalytLRNFAlt} provides
estimates determining the analyticity radius $\delta_i(t)$ of the mapping $y_i \to
u_n(\blank,y_i,y_i^*)$, in the appropriate function space. 

A natural question arises as to how to adapt this approach in the nonlinear case
($\LSet = 0$). The analyticity of solutions to generic Cauchy problems on Banach
spaces is studied in \cite[Theorem 4.11, Section 4.11, pages 165--166]{zeidler1985a}.
If the mapping $y \mapsto N(\blank,\blank,y)$ is affine, conditions for analyticity
of the solution are also given in~\cite{hansenSparseAdaptiveApproximation2013}. With
reference to the continuous parametrised problem
\[
  u'(t,y) = N(t,u(t,y),y), \qquad (t,y) \in J \times \Gamma,  
  \qquad 
  u(0,y) = v(y) \qquad y \in \Gamma
\]
we deduce that if $(t,u,y) \mapsto N(t,u,y)$ is analytic on $V \subseteq \RSet \times
\XSet \times \Gamma$, then the mapping $(t,v,y) \mapsto u(t,v,y)$, in which we
indicate dependence of the solution on the initial condition $v$, is analytic on an
open $Z \subset V$. 
% A similar strategy would hold for the nonlinear projected problem, which involves the projector $P_n$.

If we pursued a study analogous to the linear case, however, appealing to such result
would be insufficient, because bounding the total error as in \cref{thm:totalBound}
requires an estimate of the radius of analyticity $\tau$ (and this is precisely what
\cref{thm:AnalytLRNFAlt,thm:C-analyticity-C0} do for the linear case).
We have also seen in \cref{rem:contractivity,sssec:kthBoundsContractive} that, in the
case of non-contracting dynamics, such radius shrinks as $t$ increases.

The dynamics of general nonlinear problems is deeply influenced by so-called
bifurcation points. As a simple example, consider a family of equilibria of the
neural field problem, $u_*(y_i)$ which are in generically parametrised in one of the
variables $y_i$. The dynamics of small perturbations to the rest state $u_*(y_i)$ is
governed by a linear problem, and the spectrum of the associated linear operators $A(y_i)$
determines, roughly speaking, whether perturbations decay or grow. The spectrum of
$A(y_i)$ thus changes with the parameter value, and a frequent scenario in nonlinear
problems is that the spectrum of $A(y_i)$ crosses the imaginary axis as $y_i$ varies
across a so-called bifurcation point, thereby determining a behavioural change in the
solution, which may pass from a contracting to an expanding one, and vice versa. 

In other words, in nonlinear problems the domains $\Gamma_i$ are generically
enclosing bifurcation points, which mark sharp transitions in the mapping $y_i
\mapsto u(t,y_i)$ for $t$ fixed. Thus, in nonlinear problems the expectation is that
$A(y_i)$ will generally have eigenvalues on the right-half complex plane, for which
we know that analyticity radii shrink as $t$ grows. Consequently, we can hope to
observe numerically fast convergence rates on long time intervals only in cases where
$\Gamma_i$ does not enclose bifurcations, with the spectrum of $A(y_i)$ staying on
the left-half complex plane for all $y_i \in \Gamma_i$.

We remark that these considerations are valid for generic nonlinear problems, not
just the neural field presented here. In this paper, rather than pursuing the
computation of analyticity radii for nonlinear projected problem, which seems
possible and is expected to give shrinking analyticity radii, we employ stochastic
collocation on nonlinear problems and present numerical evidence of convergence
behaviour in \cref{sec:numerics}.

\section{Numerical results}
\label{sec:numerics}
We include in this section a collection of numerical experiments to test our scheme.
All computations run on a laptop computer with 2 or 4 cores, except the convergence
plots of \cref{subsec:NumericalEx3}, which were run on ADA, an HPC facility at VU
Amsterdam, using 32 cores.

\subsection{Problem 1: linear case, uniformly distributed parameter in $\RSet$} \label{ssec:NumericalEx1} 
As first example we consider a linear neural field
with one-dimensional, non-affine noise in the external input $g$. We choose a system
that meets the hypotheses of \cref{thm:AnalytLRNFAlt} with $\XSet = C(D)$ (see
\cref{ex:gAnaExample} for
checking analyticity of the random input), the sub-gaussianity hypothesis, and to
which the theory of \cref{sec:linearAnalysis} for the error analysis applies. The
problem's solution and QOI are known in analytic
form, allowing to compare theory and numerical experiments.

The neural field is posed on the spatio-temporal domain $ J \times D = [0,T] \times
[-L,L]$ with $T = L = 1$, deterministic data
 \begin{equation}\label{eq:prob1DataDet} 
	w(x,x') = xx', 
  \qquad  f(x)=x, 
  \qquad v(x) = \sin(4\pi x).
 \end{equation}
 and a one-dimensional noise random input of the form $g(x,t,Y(\omega_w))$, with $Y \sim \mathcal{U}[\alpha,\beta]$ and
 \begin{equation}\label{eq:prob1DataRand} 
	g(x,t,y)=e^{ty} \Big[ (y+1) \sin(4\pi x)+\frac{x}{2\pi}\Big],
 \end{equation}
 for which the density $\rho(y) = (\beta - \alpha)^{-1}$ is defined on the compact $\Gamma_g = [\alpha,\beta]$, leading to 
\[ 
  u(x,t,y)=e^{yt}\sin(4\pi x), \qquad 
  \mathbb{E}_{\rho} [ u(x,t,\blank)] = 
  \frac{e^{\beta t}-e^{\alpha t}}{t(\beta-\alpha)}
  \sin (4\pi x),
  \qquad 
  (x,t,y) \in D \times J \times \Gamma_g.
\]

\begin{figure}\label{fig:linearUniform}
\centering
	\includegraphics{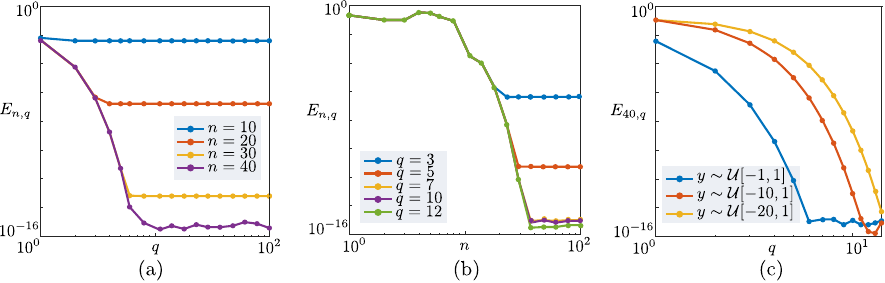}
\caption{Spectral convergence of the stochastic colllocation method paired to
  Chebyshev collocation in space, for the RLNF problem with data
  \crefrange{eq:prob1DataDet}{eq:prob1DataRand}. (a) Error $E_{n,k}$, defined in
  \cref{eq:numError} as a function of $q$, for
  various values of $n$, showing that the error decays expoentially until an
  $n$-dependent plateau, whose value is lower when $n$ increases. (b)
  Error $E_{n,k}$, as a function of $n$, for various values of
  $q$. (c) For fixed $n = 40$ (which gives machine accuracy in the spatial error),
  the error $E_{40,q}$ is exponentially convergent and increases when the variance of
  the distribution $\mathcal{U}[\alpha,\beta]$ is larger. Parameters $T = 1$, $L=1$
  and, in  (a,b), $\alpha = -2$, $\beta = 0.5 $.}
\end{figure}

The quantity of interest we approximate is the expectation of the spatial
profile at final time, $\mathbb{E}_{\rho} [u(x,T,\blank)]$. The problem is
discretised in space using a Chebyshev spectral collocation scheme with $n$ nodes, and
Clenshaw-Curtis quadrature, which is spectrally convergent for fixed values of
$y$~\cite{avitabileProjectionMethodsNeural2023, avitabile2024-codes}.
For the time discretisation we used Matlab's in-built, explicit, adaptive, 4th-order
Runge-Kutta solver \textsc{ode45}.
Our theory does not account for time-discretisation error (see
\cite{avitabile2024NeuralFields} for an example of how this can be carried out in
abstract form) and we set the relative and absolute tolerance to $10^{-14}$ and
$10^{-15}$ respectively, so that spatial and stochastic collocation errors dominate
the time-integration error. Integrals are computed using $q$ Gauss-Legendre
quadrature with $q$ nodes on $\Gamma_g$, generated using \cite{lgwt}, and we
monitor the error
\begin{equation}\label{eq:numError}
  E_{n,q} :=
  \| \mean_\rho u(T,\blank) -  \mean_\rho u_{n,q}(T,\blank) \|_{\XSet} = 
  \max_{x \in D} | \mean_\rho u(x,T,\blank) -  \mean_\rho u_{n,q}(x,T,\blank) |.
\end{equation}
From \cref{thm:determErrorBound}, and reasoning as in
\cref{ex:totalErrorBound,eq:exampleErrorSplit}, we expect $E_{n,q} \leq E^{x}_n +
E^y_q$, with the spatial bound $E^x_n$ depending solely on $n$, and the stochastic
collocation bound $E^y_q$ depending solely on $q$. Differently from
\cref{ex:totalErrorBound}, though we expect both errors to decay exponentially, in
view of the spectral convergence of the Chebyshev scheme \cite[Corollary
4.3]{avitabile2024}.

\Cref{fig:linearUniform}(a) provides evidence of this behaviour, showing $E_{n,q}$
as a function of $q$ for a range of fixed $n$ values. The error decays exponentially
fast, up to a plateu dependent on $n$, signalling that $E^x_n$ dominates $E^y_q$ and further
refinements in $q$ are not useful. \Cref{fig:linearUniform}(b) shows the
complementary behaviour on $E_{n,q}$ as a function of $n$, for various values of $q$.
Finally, \cref{fig:linearUniform}(c) fixes $n = 40$, which guarantees machine
accuracy in \Cref{fig:linearUniform}(b), and shows that higher (but
still exponentially convergent) errors are attained when the variance of the distribution
$\mathcal{U}[\alpha,\beta]$ increases.

\subsection{Problem 2: nonlinear case, uniformly and normally distributed random
parameters in $\RSet^2$}\label{subsec:NumericalEx2} We
then present a nonlinear problem whose solution is not known in closed form, but with
input data that is in use in the mathematical neuroscience literature. We examine
convergence towards a highly resolved solution, in multiple parameters and possibly
unbounded domains. We set $L = 10$,
$T=1$ for the spatio-temporal domain, and take deterministic data
\begin{equation}\label{eq:prob2DataDet}
  w(x,x') = (1-\sigma_w |x-x'|)e^{-\sigma_w |x-x'|} \bigl[ A_0+A_1 \sin(\omega_A x') \bigr],
  \qquad 
  f(u)=\frac{F_0}{1+e^{-\mu(u-h)}},
\end{equation}
and random data of the form $g(x,t,Y_1(\omega_g))$, $v(x,Y_2(\omega_v))$, 
for either $Y_i \stackrel{i. i. d}{\sim} \mathcal{U}[\alpha_i,\beta_i]$ or $Y_i
\stackrel{i. i. d}{\sim} \mathcal{N}(\mu_i,\sigma_i)$, $i = 1,2$, and with functions
given by

\begin{equation}\label{eq:prob2DataRand}
	v(x,y_1)= y_1 e^{-x^2}, \qquad g(x,t,y_2) = y_2 \sin(\omega_g t) e^{-x^2/\sigma_g^2}.
\end{equation}
% The synaptic kernel is the perturbation of an homogeneous kernel function 
% $\tilde{w}(|r|)= (2-\sigma_w |r|)e^{-\sigma_w |r|}$ via the function 
% $A(x')=1+A_1 \sin(\omega_A x')$.  The firing rate is a bounded sigmoidal. The external input
% simulates a localised pulse. Uncertainty in the external stimulation $g$ and in the
% initial condition $v$ is introduced via two independent parameters $(y_1,y_2)$
% modelled as random variables with corresponding probability density functions $
% \rho_1 $ and $  \rho_2$.
% In this example, all the other parameters are fixed (see caption of
% \cref{fig:NonlinearUniform} ). The temporal and spatial domains of the problem are
% $J=(0,T]$ with $T=1$, and $D=[-10,10]$. The forward problem entails the estimation of
% the expectation values of the solution $\mathbb{E_{\rho }}[ u(T)]$ at final time $T$,
% where $\rho=\rho_1 \rho_2$.
%$y_1\sim \rho_1 = \mathcal{U}[\alpha_1,\beta_1]$ and $ y_2 \sim \rho_2 = \mathcal{U}[\alpha_2,\beta_2]$.
The system models a neural field with heterogeneous (not translation-invariant)
kernel, sigmoidal firing rate, and random pulsatile initial condition and forcing.
The numerical discretisation and methods follow \cref{ssec:NumericalEx1}, with two
modifications: firstly, we use a dense tensor-product grid for this case, and employ
Gauss-Hermite quadrature computed with \cite{gaussHermite} on the unbounded
domain $\Gamma = \RSet^2$ for normally distributed random variables; secondly,
we monitor solely the error of the stochastic collocation (hence we fix $n^* = 40$),
and we replace the analytic expectation with one computed
with $(q_1^*,q_2^*)$ nodes, that is, instead of the error \cref{eq:numError} we
monitor the discrepancy
\begin{equation}\label{eq:errorq}
  \tilde E_q :=
  \| \mean_\rho u_{n^*,q^*}(T,\blank) -  \mean_\rho u_{n^*,q}(T,\blank) \|_{\XSet},
  \qquad q = (q_1, q_2), \qquad q^* = (q^*_1, q^*_2).
\end{equation}

\Cref{fig:NonlinearUniform} (d) investigates the error using refinements in each
direction, that is, we present the error in the $(q_1,q_2,\tilde E_q)$-space. These
experiments show exponential convergence also in the 2-parameter case.
Further, in \cref{fig:NonlinearUniform}(a,b,c), we examine convergence when
variance is increased, and when the parameters are both uniformly and normally
distributed. In these nonlinear experiments we expect bifurcations to occur in the
model, as neural fields with the deterministic choices made here for $w$ and $f$ are
known to support a zoo of attracting equilibrium states, arranged in branches of
solutions bifurcating at saddle-nodes bifurcations, and having markedly different
spatial profiles \cite{avitabileSnakesLaddersInhomogeneous2015}.
In this system, orbits within a small parameter range
can diverge on finite $T$, we expect the mapping $y \mapsto u(T,\blank,y)$ to
have sharp gradients, and result in a slower convergence rate, as visible in 
\cref{fig:NonlinearUniform}(e). The experiment in \cref{fig:NonlinearUniform}(e)
explains why in \cref{fig:NonlinearUniform}(a,b,c) we see slower convergence rates
for normally distributed parmeters (for which $\Gamma = \RSet^2$ necessarily includes
the bifurcation point) and not for the uniformly distributed parameters (for which
$\Gamma = [\alpha_1,\beta_1] \times [\alpha_2,\beta_2]$ is chosen so as to
avoid the bifurcation point).

\begin{figure}\label{fig:NonlinearUniform}
\centering
	\includegraphics{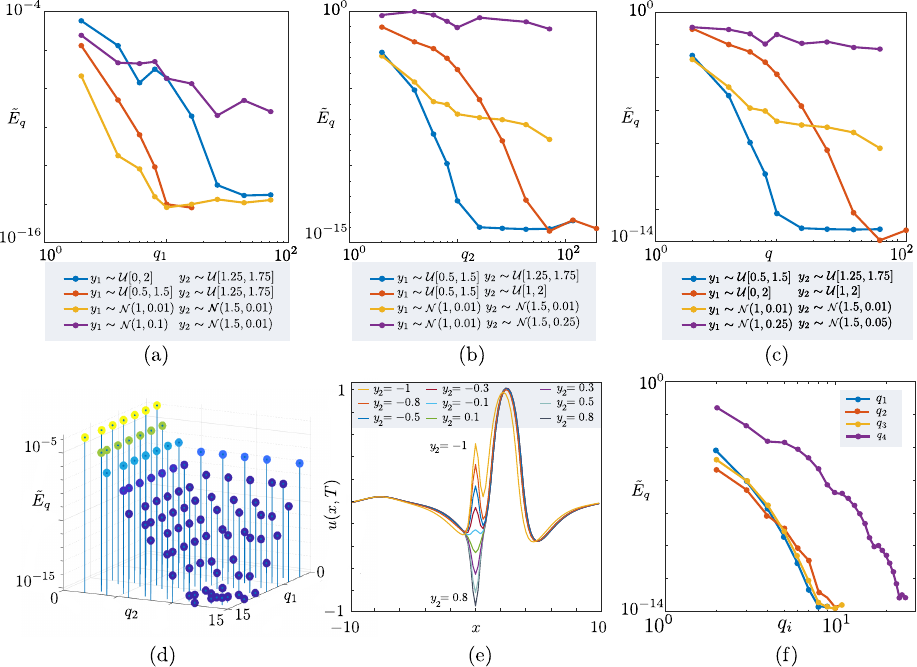}
\caption{Convergence of stochastic collocation error $\tilde E_q$ \cref{eq:errorq},
  for the NRNF problem in \crefrange{eq:prob2DataDet}{eq:prob2DataRand} with $2$- and
  $4$-dimensional random noise. (a) We fix the number of collocation points in the
  $y_2$-direction and refine the tensor grid along $y_1$-direction. We set
  $q^*_2=10$ for the normal distribution and $q^*_2=64$ for the uniform ones. The
  number $q^*_1$ is taken as large as possible to test convergence, but differs for
  each curve. In (b), we fix $q^*_1=16$ in both normal and uniform settings, and
  increase $q_2$. In (c), we increase $q_1=q_2=q$ simultaneously, and we plot $q$. In
  (a--c) we note that the error in $y_2$ dominates over the one in $y_1$, and slower
  convergence is observed when the variance increases. In (d), the noise in the external
  input and the initial condition depends on $y_1\sim
  \mathcal{U}[1.25,1.75]$ and $y_2\sim \mathcal{U}[0.5,1.5]$ and we show an
  experiment in which $q^*=(15,15)$ is kept fixed for all simulations. $(e)$ We show
  nine solution profiles at final time $T$ of the NRNF, for $y_1=1$ and ten different
  values of the parameter $y_2\in[-1, 0.8]$, giving evidence of a bifurcation in
  this parameter range, and explaining the lower convergence rates seen in (a--c) for
  normally-distributed parameters (see main text). In (f) we refine one grid
  direction at a time for the NRNF with 4 parameters, as given in
  \cref{subsec:NumericalEx3}. Other parameter values are:
  $\sigma_{\omega}=1$,
  $F_0=1$, $\mu=10$, $h=0.3$, $\omega_g=1$ and $\sigma_g=0.4$, $n^* = 40$, $T = 1$.}
\end{figure}

\subsection{Problem 3: nonlinear case, uniformly distributed random parameters in
$\RSet^4$}\label{subsec:NumericalEx3} In a final test we amend Problem 2 and make
the variables $A_0$ and $F_0$ uniformly distributed, so that all input data is
random, with $y \in \RSet^4$. \Cref{fig:NonlinearUniform}(f) displays $\tilde E_q$ as we
refine points one direction at a time, and confirms that similar convergence rates to
the case $y \in \RSet^2$ can be attained in this case.

\section{Conclusions}\label{sec:conclusions} 
In this paper we have introduced a scheme for forward UQ in neural field models,
using stochastic collocation. The method adapts stochastic collocation from PDE
literature to integro-differential equations arising in mathematical neuroscience,
and it combines with generic spatial discretisations for the problem. The method
presented here can be readily implemented to perform UQ, upon coding the
spatially-projected forward simulation problem within existing UQ software such as
\textsc{Sparse Grids Matlab Kit}~\cite{piazzola.tamellini:SGK} and
\textsc{UQLab}~\cite{marelliUQLabFrameworkUncertainty2014}.

It seems possible to study variations of stochastic collocation which, in the PDE
literature, overcome some of the limitations of the method, and are
suitable for a large number of parameters, in particular sparse grid, sparse
Galerkin, and anisotripic methods \cite{
nobileSparseGridStochastic2008a,
nobileAnisotropicSparseGrid2008,
  hansenSparseAdaptiveApproximation2013,
hoangSparseTensorGalerkin2013a,
ernstConvergenceSparseCollocation2018,
adcockSparsePolynomialApproximation2022}. Further, we expect that a similar approach to
the one taken here, which splits the spatial and stochastic error for the problem,
will help us analysing Monte Carlo Finite Element schemes and its variants (in particular we should
be able to adapt directly the methods in \cite[Chapter 9]{Lord:2014ir}). An immediate
application of the theory presented in this paper is the use of stochastic
collocation for Bayesian inverse problems in neural fields, which can follow its PDE
analogues \cite{marzoukStochasticCollocationApproach2009} and is promising for
low-dimensional parametrised problems.

\section*{Acknowledgements} We are grateful to Dirk Doorakkers and Hermen Jan Hupkes,
for discussing conditions that guarantee contractivity of parameter-dependent
families of nonlinear operators \cref{hyp:contractivity,lem:contractivityCondition}. DA and
FC acknowledge
support from the National Science Foundation under Grant No. DMS-1929284 while the
authors were in residence at the Institute for Computational and Experimental
Research in Mathematics in Providence, RI, during the “Math + Neuroscience:
Strengthening the Interplay Between Theory and Mathematics" programme.

\appendix 
\section{Perturbative result on semigroup operator}
The proof of $C^0_\sigma$-regularity for the solution uses the following perturbative
result on operator semigroups
\begin{lemma}[Perturbation of semigroup operators]\label{lem:semigroupPert}
  Let $A,B \in BL(\XSet)$, where $\XSet$ is a Banach space. The
  uniformly continuous semigroups $( e^{tA})_{t \geq 0}$, $( e^{tB})_{t \geq 0}$,
  generated on $\XSet$ by $A$ and $B$, respectively, satisfy 
  \[
    \| e^{tA}- e^{tB} \| \leq  t \| A - B \| e^{t \| A-B \|} e^{t \min (\| A \|, \| B \|)},
    \qquad t \in \RSet_{\geq 0}.
  \]
\end{lemma}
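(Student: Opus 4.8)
The plan is to use the variation-of-parameters (Duhamel) identity for uniformly continuous semigroups, followed by elementary operator-norm estimates. Since $A, B \in BL(\XSet)$, the exponentials $e^{tA} = \sum_{k \geq 0} t^k A^k / k!$ and $e^{tB} = \sum_{k \geq 0} t^k B^k / k!$ are norm-convergent power series, so the map $s \mapsto e^{sA} e^{(t-s)B}$ is differentiable in $BL(\XSet)$ with derivative $e^{sA}(A - B)e^{(t-s)B}$; here one differentiates the two series term by term and uses $\frac{d}{ds} e^{sA} = A e^{sA} = e^{sA} A$, and similarly for $B$. The fundamental theorem of calculus for $BL(\XSet)$-valued functions then gives
\[
  e^{tA} - e^{tB} = \int_0^t \frac{d}{ds}\bigl( e^{sA} e^{(t-s)B} \bigr)\, ds
                  = \int_0^t e^{sA} (A - B) e^{(t-s)B}\, ds .
\]

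Next I would take operator norms. Using submultiplicativity of the operator norm together with the standard bound $\| e^{sC} \| \leq e^{s\|C\|}$ for $C \in BL(\XSet)$ and $s \geq 0$ (already invoked elsewhere in the paper), the integrand satisfies, for $s \in [0,t]$,
\[
  \| e^{sA}(A - B) e^{(t-s)B} \| \leq e^{s\|A\|} \, \|A - B\| \, e^{(t-s)\|B\|}
  \leq \|A - B\| \, e^{t \max(\|A\|, \|B\|)},
\]
since $s\|A\| + (t-s)\|B\| \leq t\max(\|A\|,\|B\|)$ whenever $s \in [0,t]$. Integrating over $[0,t]$ yields
\[
  \| e^{tA} - e^{tB} \| \leq t \, \|A - B\| \, e^{t \max(\|A\|, \|B\|)} .
\]

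Finally I would absorb the maximum into the desired form using the triangle inequality: if $\|A\| \geq \|B\|$ then $\|A\| = \|(A-B) + B\| \leq \|A - B\| + \|B\|$, i.e. $\max(\|A\|, \|B\|) \leq \|A - B\| + \min(\|A\|, \|B\|)$, and the symmetric bound holds if $\|B\| \geq \|A\|$. Hence $e^{t\max(\|A\|,\|B\|)} \leq e^{t\|A-B\|} e^{t \min(\|A\|,\|B\|)}$, which combined with the previous display gives exactly
\[
  \| e^{tA} - e^{tB} \| \leq t \|A - B\| e^{t\|A-B\|} e^{t\min(\|A\|,\|B\|)}.
\]

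There is no real obstacle here; the only point requiring a modicum of care is the justification of the Duhamel identity — that $s \mapsto e^{sA}e^{(t-s)B}$ is norm-differentiable with the stated derivative and that the fundamental theorem of calculus applies to $BL(\XSet)$-valued maps — all of which is routine because $A$ and $B$ are bounded, so the semigroups are given by norm-convergent exponential series and all manipulations take place in the Banach algebra $BL(\XSet)$. One could alternatively establish the identity by differentiating in $t$ the map $t \mapsto e^{-tB}(e^{tA} - e^{tB})$, or by a direct series computation, but the Duhamel route is the cleanest.
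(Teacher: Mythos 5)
Your proof is correct and follows essentially the same route as the paper: both rest on the Duhamel identity $e^{tA}-e^{tB}=\int_0^t e^{sA}(A-B)e^{(t-s)B}\,ds$ followed by elementary norm estimates. The only (cosmetic) difference is in the final bookkeeping — you bound the integrand by $\|A-B\|e^{t\max(\|A\|,\|B\|)}$ and then apply the reverse triangle inequality $\max(\|A\|,\|B\|)\leq\|A-B\|+\min(\|A\|,\|B\|)$, whereas the paper estimates the integral directly to get the bound with $e^{t\|A\|}$ and then symmetrizes by swapping $A$ and $B$; both yield the stated inequality.
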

\begin{proof} By \cite[Chapter III, Section 1, Corollary 1.7]{engelShortCourseOperator2006} it holds
  \begin{equation}\label{eq:engelBound}
    (e^{tB}-e^{tA})v = \int_{0}^{t} e^{(t-s)A} (B-A) e^{sB} v\,d s,
    \qquad v \in \XSet,  \qquad t \in \RSet_{\geq 0}.
  \end{equation}
  Taking norms, and recalling $\| e^{tA} \| \leq e^{t\| A \|}$ for
  any $A \in BL(\XSet)$ and $t \in \RSet_{ \geq 0}$ we get
  \[
  \begin{aligned}
    \| (e^{tB}-e^{tA})v \| & \leq  \| B-A \| \| v \| \int_{0}^{t} e^{t\| A \|}
                                                     e^{s(\| B \| - \| A \|)} \,d s \\
                           & \leq  \| B-A \| \| v \| e^{t\| A \|} \int_{0}^{t} 
                                                     e^{s\| B - A \|} \,d s \\
                           & <  t\| A-B \| e^{t\| A - B \|} e^{t\| A \|} \| v \|, 
  \end{aligned}
  \]
  hence
  \[
    \| e^{tA}- e^{tB} \| \leq  t \| A - B \| e^{t \| A-B \|} e^{t \| A \|},
    \qquad t \in \RSet_{\geq 0}.
  \]
 Swapping $A$ and $B$ in \cref{eq:engelBound} we obtain
  \[
    \| e^{tA}- e^{tB} \| \leq  t \| A - B \| e^{t \| A-B \|} e^{t \| B \|},
    \qquad t \in \RSet_{\geq 0},
  \]
  and combining the last two bounds we prove the statement.
\end{proof}
\section{Further proofs}
\printProofs

\bibliography{references}
\bibliographystyle{siamplain}
 
\end{document}